\newtheorem{dummy}{dummy}[section]
\newtheorem{lemma}[dummy]{Lemma}
\newtheorem{theorem}[dummy]{Theorem}
\newenvironment{customthm}[1]
{\innercustomthm}
  {\endinnercustomthm}
\newtheorem{corollary}[dummy]{Corollary}
\newtheorem{proposition}[dummy]{Proposition}
\theoremstyle{definition}
\newtheorem{definition}[dummy]{Definition}
\newtheorem*{definition*}{Definition}
\newtheorem{example}[dummy]{Example}
\newtheorem{remark}[dummy]{Remark}
\newtheorem{question}[dummy]{Question}
\newcommand{\bA}{\mathbb{A}}
\newcommand{\bC}{\mathbb{C}}
\newcommand{\cE}{\mathcal{E}}
\newcommand{\cK}{\mathcal{K}}
\newcommand{\cM}{\mathcal{M}}
\newcommand{\cO}{\mathcal{O}}
\newcommand{\cT}{\mathcal{T}}
\newcommand{\cX}{\mathcal{X}}
\newcommand{\Hom}{Hom}
\newcommand{\Perf}{\mathrm{Perf}}
\newcommand{\Ell}{\mathcal{E} ll}
\newcommand{\Spec}{\mathrm{Spec}}
 \newcommand{\twocell}[1]{\ar@{}[#1]^(.30){}="a"^(.70){}="b" \ar@{=>} "a";"b"}
 \newcommand{\ocell}[1]{\ar@{}[#1]^(.30){}="a"^(.70){}="b" \ar@{=} "a";"b"}
\newcommand{\Qcoh}{\mathrm{Qcoh}}
\begin{document}

\author[Scherotzke]{Sarah Scherotzke}
\address{Sarah Scherotzke, 
Department of Mathematics\\
Universite du Lexembourg\\ 
6 Av. de la Fonte\\ 4364 Esch-sur-Alzette\\ Luxembourg}
\email{\href{mailto:sarah.scherotzke@uni.lu}{sarah.scherotzke@uni.lu}}

\author[Sibilla]{Nicol\`o Sibilla}
\address{Nicol\`o Sibilla, SISSA\\ 
Via Bonomea 265\\ 34136 Trieste TS\\
Italy}
\email{\href{mailto:nsibilla@sissa.it}{nsibilla@sissa.it}}

\title[Equivariant elliptic cohomology, toric varieties, and derived equivalences]{Equivariant elliptic cohomology, toric varieties, and derived equivalences}


\begin{abstract} 
In this article we study the equivariant elliptic cohomology of complex toric varieties. We prove a  partial reconstruction theorem  showing that equivariant elliptic cohomology encodes considerable non-trivial information on the equivariant 1-skeleton of a toric variety $X$ (although it stops short of being a complete invariant of its GKM graphs). Elliptic cohomology is supposed to encode higher categorical geometric data, and proposals have been made linking elliptic cocycles to categorified bundles. In particular, contrary to ordinary cohomology and K-theory,  elliptic cohomology is expected not to be a derived invariant of algebraic varieties. Our second main result is to verify this prediction by showing that there exist  pairs of equivariantly  derived equivalent toric varieties with non-isomorphic equivariant elliptic cohomology.
 \end{abstract}

\maketitle


\section{Introduction} 
The first indications of the existence of elliptic cohomology theories came from work of Landweber--Stong and Ochanine on elliptic genera. Witten's construction of a  kind of universal  elliptic genus (i.e. the  Witten genus) revealed that these objects had deep connections  with quantum field theory, and gave further impetus to this area. Today elliptic cohomology is a  central objects in  homotopy theory. We refer the reader to  \cite{lurie2009survey} for a beautiful  overview of   this area of intense current research.    One of the main open  problems in elliptic cohomology is to find  
a meaningful geometric construction of elliptic  cocycles. Cocycles  in complex K-theory are represented by vector bundles but no such description of elliptic cocycles has been fully established, although there have been   important contributions in this direction starting with ideas of Segal, and the Stolz--Teichner program.

A key insight from chromatic homotopy theory is that increasing the chromatic level ought to involve an increase in categorical complexity. Elliptic cohomology is of chromatic level two; this has led, for instance, to the idea  
that cocycles in elliptic cohomology should be related to 2-bundles, i.e. bundles of categories over a space \cite{baas2004elliptic}. 
In this article we make no step forward on the difficult question of finding a geometric meaning for elliptic cocycles. However  our findings corroborate the idea that elliptic cohomology detects a   layer of the geometry of a space  which is not captured by 1-categorical  data. To show this, we prove a general structure result on the equivariant elliptic cohomology of toric varieties which is of independent interest.

More precisely, our goal in this article  is  twofold: 
\begin{itemize}
\item We prove that the equivariant elliptic cohomology of a toric variety encodes highly non-trivial information  on  the geometry of the variety $X$ and 
 on the $T$-action (see Theorem \ref{mainA} of this Introduction). Namely, it allows us to recover a substantial part of the GKM graph of $X$ (but not quite the whole of it): essentially, we can reconstruct the set of vertices and decorated edges, but not the incidence relation between them.
\item We use Theorem \ref{mainA} to show that equivariant elliptic cohomology is not an invariant of the equivariant derived category of a toric variety (see Theorem \ref{mainB} of this Introduction). This confirms the expectation that elliptic cohomology encodes data that is not 1-categorical in nature, and therefore is not captured by the derived category (which is in a sense the universal repository of 1-categorical information of a variety or stack). Note that  both equivariant K-theory and equivariant cohomology are  derived invariants.
\end{itemize}

\subsection{Equivariant elliptic cohomology and toric varieties}
We will fix thoughout the paper an  elliptic curve $E$ over $\mathbb{C}$.

By definition, elliptic cohomology is a complex oriented  cohomology theory whose formal group law is  the completion of $E$ at the identity. In fact, we will only work with the complexification of elliptic cohomology.   Complexification collapses all cohomology theories to  singular cohomology, up to shifts and sums. However this is no longer true if we work equivariantly with respect to a group action. Building on ideas that ultimately go back to Atiyah--Segal's theorem on the completion of genuine equivariant K-theory, Grojnowski  gave a beautiful construction of complexified equivariant elliptic cohomology in \cite{grojnowski1994delocalised}.

Turning on equivariance amounts to decompleting the formal group law. In the case of elliptic cohomology,  this means that the $S^1$-equivariant elliptic cohomology  of an $S^1$-space $X$ should define a  coherent sheaf of $\mathbb{Z}_2$-graded algebras over $
E$.  
   More generally, if $G$ is a compact Lie group, $G$-equivariant elliptic cohomology should take values in coherent sheaves of algebras over the moduli of $G_\mathbb{C}$-bundles over $E$, $Bun_{G_{\mathbb{C}}}(E)$. This feature is built in in Grojnowski's  definition of equivariant elliptic cohomology, which we   will   review  in Section \ref{preleqell}.  Note that this story is parallel to the classical picture of complex   K-theory, whose formal group law is the completion of the multiplicative group $\mathbb{G}_m$ at one: indeed, the $S^1$-equivariant K-theory of a space $X$ is a module over $K_{S^1}^0(pt)=\mathbb{Z}[t, t^{-1}]$ and therefore can be viewed as defining a coherent sheaf over $$\mathbb{G}_m=Spec(\mathbb{Z}[t, t^{-1}])$$
 
Let $T$ be a complex algebraic torus of rank $n$, and let $T^\vee$ be its cocharacter lattice. We define 
$\, 
\cM_T := E\otimes T^\vee.$  
Choosing a basis of $T$ gives an  isomorphism $\cM_T \cong E^n$. Let $X$ be a $T$-toric variety. The equivariant elliptic cohomology of $X$ is a coherent sheaf\footnote{Strictly speaking the equivariant elliptic cohomology of a $T$-space is a  $\mathbb{Z}_2$-graded complexes of coherent sheaves over $\cM_T$. However in the case of GKM varieties with vanishing odd cohomology, of which smooth toric varieties are an example, all information is encoded in the degree zero part which is just an ordinary coherent sheaf; see Remark \ref{truncation} for more comments on this point.}
$$
\Ell_T(X) \in Coh(\cM_T)$$ 
Our first main result shows that $\Ell_T(X)$, viewed just as a coherent sheaf over $\cM_T$, encodes interesting information on the geometry of $X$ and  the $T$-action. 

\begin{customthm}{A}[Theorem \ref{main1}]
\label{mainA}
Let $X$ be a smooth and proper toric variety. Then    $\Ell_T(X)$ is a vector bundle of rank $m$ where $m$ is the number of torus fixed points.  
Additionally, 
we can reconstruct  from $\Ell_T(X)$
\begin{enumerate}
\item the number of one-dimensional orbits $O$ of the $T$-action
\item and, for every one-dimensional orbit $O$, the $n-1$-dimensional isotropy torus $T' \subset T$
\end{enumerate} 
\end{customthm}
In fact Theorem   \ref{main1} in the main text is  more general than Theorem \ref{mainA}, as it applies to the wider class of toric varieties that are \emph{good} in the sense of Definition \ref{goodtoric}. Good toric varieties have a toric open cover by affine spaces. The statement has to  be slightly modified, as only the relatively compact one-dimensional orbits are visible through $\Ell_T(X)$. This is a minor generalization, but it gives us the possibility to apply our result to non-proper toric varieties, that typically have a simpler  geometry. In particular Example \ref{examplenoniso}, which underlies Theorem \ref{mainB}, is about a pair of good  toric three-folds that are not proper.


\begin{remark}
In Theorem \ref{mainA} we regard $\Ell_T(X)$ purely as a coherent sheaf over $\cM_T$, and disregard the algebra structure. In fact, keeping track of the algebra structure of  $\Ell_T(X)$ would  allows us  to reconstruct $X$ and the $T$-action entirely. Masuda has proved in \cite{masuda2008equivariant} that ordinary equivariant cohomology $H^*_T(X)$ with its algebra structure is a complete invariant of a toric variety $X$. It is easy to see that the same is true of $\Ell_T(X)$, which is a richer invariant than $H^*_T(X)$, and recovers the latter as its completion at the identity element of the abelian variety $\cM_T$. 
\end{remark}

\begin{remark}
Equivariant K-theory and equivariant cohomology  also define coherent sheaves on the decompletions of their formal group laws (see Section \ref{preleqell} for more details): 
$$\mathcal{K}_T(X) \in Coh(\mathbb{G}_m \otimes T^\vee)  \quad \text{and}  \quad \mathcal{H}_T(X) \in Coh( \mathbb{A}^1 \otimes T^\vee)$$
 It is easy to see that, just as $\Ell_T(X)$,  $\mathcal{K}_T(X)$ and $\mathcal{H}_T(X)$ are vector bundles  of rank $m$ where $m$ is the number of torus fixed points. However, contrary to $\Ell_T(X)$, they are necessarily trivial vector bundles. Thus the number of torus fixed points is the only invariant of the $T$-action that can read off  $\mathcal{K}_T(X)$ and $\mathcal{H}_T(X)$.
\end{remark}

Theorem \ref{mainA} is probably the best result that one can hope for for (good) toric varieties. For instance, example \ref{toricsurf} implies  that the full GKM graph of $X$ cannot be reconstructed from $\Ell_T(X)$. It is an interesting question whether Theorem \ref{mainA}  generalizes to all GKM-manifolds. We remark that our proof depends on the simple combinatorics of fans, which is not available in the general GKM setting. On the other hand the number of fixed points can always be read off $\Ell_T(X)$ for all GKM manifolds, since it is equal to the rank of $\Ell_T(X)$. \begin{question}
Let $X$ be a GKM $T$-manifold. Can we 
reconstruct  the number of one dimensional orbits, and the corresponding  isotropy groups, from $\Ell_T(X)$? 
\end{question}  
Before proceeding let us  comment on the proof of Theorem \ref{mainA}. Let 
$X$ be a good toric variety, and let 
$\mathfrak{U}$ be its toric open cover by affine spaces. The key point is showing that, given $\Ell_T(X)$, we can reconstruct uniquely  the 2-truncation of the \v{C}ech complex that computes $\Ell_T(X)$  as a limit of $\Ell_T(U)$, where $U$ ranges between the finite intersections of  open subsets in $\mathfrak{U}$. This immediately  gives us access to the desired  information on the one-dimensional orbits of the $T$-action. However proving this fact requires a rather laborious calculation of the coherent cohomology of $\Ell_T(X)$. Namely our argument requires showing that the  dimension of the top coherent cohomology of $\Ell_T(X)$ is equal to the number of torus-fixed points.  Our methods are essentially elementary. The calculation is carried out   in Section \ref{sccc} and depends on finding a small, combinatorially manageable, model of the \v{C}ech complex that calculates the coherent cohomology of $\Ell_T(X)$. 

\subsection{Equivariant elliptic cohomology and derived equivalences}
Let $X$ be a scheme. The derived category of coherent sheaves $\mathrm{D^bCoh}(X)$ encodes a  great deal of information on the geometry of $X$. This is the rationale behind a much studied approach to non-commutative geometry, advocated by Kontsevich and others, whereby  general triangulated categories\footnote{The correct   formalism  is actually provided by triangulated dg categories, or stable $\infty$-categories, rather than by classical triangulated categories. In this introduction, for the sake of clarity, we blur this distinction; the reader can find more discussion of this point in Section \ref{prelimcat}.}  are viewed as non-commutative spaces,  and one develops techniques   to extract geometric information directly from them. There is a vast literature on this subject; we limit ourselves to mention  in this connection the beautiful theory of non-commutative motives studied in   \cite{blumberg2013universal} and   \cite{robalo2015k}. 

If we work over the complex numbers,  both the $\mathbb{Z}_2$-periodized  ordinary cohomology and the topological K-theory of a smooth variety $X$ can be computed from $\mathrm{D^bCoh}(X)$. Let $X^{an}$ be the analytification of $X$. It is classical that the periodic cohomology of $X$ is equivalent to the Tate fixed points of the natural $S^1$-action on the Hochschild chains of $\mathrm{D^bCoh}(X)$
$$
HP^*(X^{an}) \simeq HH^*(\mathrm{D^bCoh}(X))^{Tate}
$$
As for K-theory, work of Blanc \cite{blanc2016topological} shows that 
$$
K(X^{an}) \simeq {\bf K}^{top}(\mathrm{D^bCoh}(X))
$$
where ${\bf K}^{top}$ is a  \emph{localizing  invariant} of  triangulated categories in the sense of \cite{blumberg2013universal}. These results carry over to the equivariant setting, which is most relevant for our purposes. Namely, it was showed in \cite{halpern2020equivariant} that, if 
$G$ is a reductive algebraic group acting on 
$X$, the complexified  $G$-equivariant topological K-theory of $X$  can be computed from the equivariant derived category of $X$; note that, essentially by definition, the latter 
is equivalent to the derived category of the quotient stack
$$
\mathrm{D^bCoh}_G(X)\simeq \mathrm{D^bCoh}([X/G])
$$
As a consequence,  $G$-varieties that are $G$-equivariantly derived equivalent have isomorphic equivariant K-theory. This easily implies that also their Borel equivariant periodic cohomology are isomorphic.

In contrast, heuristics coming from chromatic homotopy theory predict that elliptic cohomology should not be an invariant of the derived category.  
One of our main motivations for  proving  Theorem \ref{mainA} was 
to develop a computational tool which would allow us to verify this prediction.  This is our second main result.

\begin{customthm}{B}[Example \ref{examplenoniso}]
\label{mainB}
There exist pairs of toric varieties $X$ and $X'$ such that $X$ and $X'$ are $T$-equivariantly derived equivalent\footnote{An equivariant derived equivalence is \emph{not} the same as a derived equivalence between $\mathrm{D^bCoh}([X/T])$ and  $\mathrm{D^bCoh}([X'/T])$: additionally, one needs to require compatibility with the natural $\mathrm{D^bCoh}([pt/T])$-action. We refer the reader to Section \ref{derivedequivalences} for more precise statements.} 
$$
\mathrm{D^bCoh}_T(X) \simeq \mathrm{D^bCoh}_T(X')  
$$
but they have non-isomorphic elliptic cohomology. As a consequence,  equivariant elliptic cohomology is not a derived invariant.
\end{customthm}
Given Theorem \ref{mainA} it is easy to see that, in fact, such examples are plentiful and easily constructed. Atiyah  flops provide a wealth of examples of $T$-equivariantly derived equivalent toric varieties in all dimensions. For three-folds this amounts to flipping an arc in  (a 2d projection of the) fan, and this has the effect of  
 taking out one of the isotropy subtori of the one-dimensional orbits, and replacing it with a different codimension-one subtorus.   
By Theorem \ref{mainA}, the collection of codimension one isotropy subtori  can be read off the equivariant elliptic cohomology of a toric variety.  This implies that  equivariant elliptic cohomology is  not a derived invariant. 

 {\bf Acknowledgements:}
 This article originates from a seminar on elliptic cohomology at SISSA that was organized by the first author   in 2020. We thank Barbara Fantechi, Hayato Morimura, Andrea Ricolfi and Paolo Tomasini for their   enthusiastic participation in the seminar, and their willingness to  learn about this wonderful subject together with us. We also thank Nora Ganter, Margherita Lelli--Chiesa and James Pascaleff for useful  discussions on the topic of this paper. 
  \section{Preliminaries}
 \label{preliminaries}
 Throughout the paper we will work  over the field of complex numbers $\bC$.
\subsection{Equivariant elliptic cohomology}
\label{preleqell}
We will study the equivariant elliptic cohomology of toric varieties with respect to the action of a maximal algebraic torus $T$. We give an overview of the construction of complexified  equivariant elliptic cohomology following Ganter's account in \cite{ganter2014elliptic}, to which we refer the reader for motivations and additional details. We restrict attention to torus actions, as this simplifies somewhat the exposition. The construction was first proposed in Grojnowski's seminal paper \cite{grojnowski1994delocalised}, with later contributions by Rosu, Ando and others \cite{rosu2003equivariant}, \cite{ando2003sigma}.   

We fix a complex elliptic curve $E$. Let $T$ be a finite dimensional real torus, and let $X$ be a finite $T$-CW complexes. In this paper $X$ will always be a complex toric variety equipped with the action of a maximal torus, and therefore $X$ will be in particular (equivariantly homotopic to) a finite $T$-CW complex.   Let 
$$
M_T = Hom(T, U(1)) \quad \text{and} \quad N_T = Hom(U(1),T)
$$
the character and the cocharacter lattice of $T$. Consider the variety
$$
\mathcal{M}_T = E \otimes N_T
$$
Note that any isomorphism $N_T \cong \mathbb{Z}^n
$, where $n$ is the rank of $T$, induces an isomorphism $\cM_T  \cong E^n$. Following Ganter \cite{ganter2014elliptic}, we will define the equivariant elliptic cohomology of $X$ to be  a coherent sheaf of commutative algebras over $\mathcal{M}_T$ 
$$
\cE ll_T(X) \in Coh(\mathcal{M}_T)
$$

\begin{remark}
\label{truncation}
Elliptic cohomology is an even periodic cohomology theory, see  e.g. \cite{lurie2009survey}. We 
should expect the equivariant elliptic cochains of  a space 
$X$ to define a $\mathbb{Z}_2$-periodic complex of coherent sheaves over $\mathcal{M}_T$, rather than just a coherent sheaf.   
This is indeed the case:  
 in general, the coherent sheaf $\cE ll_T(X)$  described   below only computes the $0$-th cohomology of the full equivariant elliptic cochain complex of $X$. However for toric varieties this entails no loss of information. Toric varieties have vanishing odd equivariant cohomology. As a consequence their full equivariant elliptic cohomology splits as a direct sum
$$
\bigoplus_{k \in \mathbb{Z}} \cE ll_T(X)[2k]
$$
It is therefore sufficient to keep track of the degree 0 summand and, following  the conventions in \cite{ganter2014elliptic}, this is what we will do in the remainder of the paper.
\end{remark}

We denote $\mathcal{H}_T(X)$ the (even) Borel equivariant cohomology of $X$ with complex coefficients, and  grading collapsed  modulo two 
$$
\mathcal{H}_T(X) := \bigoplus_{n \in \mathbb{Z}} H^{2k}(X \times_TET, \mathbb{C})
$$
When $X=\mathrm{pt}$, $
\mathcal{H}_T(X)$ is naturally identified with the symmetric algebra over the complexification of the dual of the Lie algebra of $T$. That is, there are identifications 
$$
Spec(\mathcal{H}_T(pt)) \cong \mathfrak{t}_\mathbb{C}  \cong \mathbb{A}^n_\mathbb{C}
$$
where the latter depends on a choice of 
isomorphism $N_T \cong \mathbb{Z}^n
$. Note that in particular, since $
\mathcal{H}_T(X)$ is a finitely presented module over $
\mathcal{H}_T(pt)$, it defines a coherent sheaf 
over  $\mathfrak{t}_\mathbb{C}  \cong \mathbb{A}^n_\mathbb{C}$. 


Let us describe the stalks of $\cE ll_T(X)$ on  $\mathcal{M}_T$, and then briefly explain the globalization step required to complete the construction. 
An inclusion of tori $T' \to T$ induces a closed embedding $\mathcal{M}_{T'} \subset \mathcal{M}_{T}$. We denote by $X^{T'}$ the $T'$-fixed points for the induced action of $T'$. For every closed point  $\alpha \in \mathcal{M}_{T}$, we denote by $S(\alpha)$  the set of the sub-tori $T' \subset T$ such that $\alpha$ lies in $\cM_{T'} \subset \cM_T$. We set  
$$
T(\alpha) := \cap_{T' \in S(\alpha)} T'
$$
The stalk of $
\cE ll_T(X)$ at $\alpha$ is given by 
\begin{equation}
\label{stalk}
\cE ll_T(X)_\alpha \cong 
\mathcal{H}_T(X^{T(\alpha)}) \otimes_{\mathcal{O}_{\mathbb{A}^n_\mathbb{C}}} ( \mathcal{O}_{\mathbb{A}^n_\mathbb{C}} )_0
\end{equation}
That is, the stalk of $
\cE ll_T(X)$ at $\alpha$ is naturally identified with the stalk of the Borel equivariant cohomology of $X^{T(\alpha)}$ at $0 \in \mathfrak{t}_{\mathbb{C}} \cong \mathbb{A}^n_{\mathbb{C}}$. This is expected on general grounds from equivariant homotopy theory, and is closely related to Atiyah--Segal's work on the relationship between genuine and Borel equivariant K-theory \cite{atiyah1969equivariant}.

To define a global coherent sheaf on $\mathcal{M}_T$ the information on its stalks is not sufficient. 
The key point is that identification (\ref{stalk}) can be 
spread to a sufficiently small analytic neighborhood $U^\alpha$ of $\alpha$: sections of $
\cE ll_T(X)$ on $U^\alpha$ can be identified with sections of $\mathcal{H}_T(X^{T(\alpha)})$ on an appropriate analytic neighborhood of the origin. The sheaf $
\cE ll_T(X)$ can then be defined uniquely by describing its sections, and restrictions, on an analytic cover of $\cM_T$. Since $\cM_T$ is proper, the fact that $
\cE ll_T(X)$ is actually an algebraic coherent sheaf  follows via GAGA. 
We refer the reader to \cite{ganter2014elliptic} for the details. 

The upshot of the construction is that one can define 
 $\cE ll_T(X)$ as a coherent sheaf of commutative algebras over $\cM_T$. 
 \begin{definition}
 We denote by $p:\cM_X \to \cM_T$ the relative Spec of 
 $\cE ll_T(X)$. 
 \end{definition}
Note that by definition there is an isomorphism 
 $\cE ll_T(X) \cong p_*(\cO_{\cM_X})$. Note also that if $X$ is the point with the trivial action, then $\cM_{pt}=\cM_T$.

 We conclude by stating the standard functoriality property of $\cE ll_T(X)$ with respect to  maps of  tori.  If $f:T' \to T$ is a map of tori, there is an induced map $\rho_f: \cM_{T'} \to \cM_T$.
\begin{proposition}
\label{prop: subtorus}
Let $X$ be a finite $T'$-$CW$-complex. Let $f:T' \to T$ be a map of tori. Then $T$ acts on $X\times_{T'} T$ and there is a natural  isomorphism  $$ 
\cE ll_{T}(X\times_{T'} T) 
\stackrel{\cong} \to 
(\rho_f)_* \cE ll_{T'}(X)$$ 
\end{proposition}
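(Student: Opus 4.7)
The plan is to compare the two sides locally on $\cM_T$, exploiting the analytic-local construction of equivariant elliptic cohomology recalled in Section \ref{preleqell}. Since $f:T' \to T$ is a morphism of algebraic tori, the induced map $\rho_f$ is a morphism of abelian varieties, in particular proper, so forming $(\rho_f)_*$ is compatible with restriction to any sufficiently small analytic neighborhood of a closed point $\beta \in \cM_T$. The strategy is to exhibit the isomorphism on stalks and on Grojnowski's analytic patches, and then verify that it is compatible with the transition data used to build both sheaves.

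First, I would fix a closed point $\beta \in \cM_T$ and describe the stalk on the left using (\ref{stalk}):
$$\cE ll_T(X \times_{T'} T)_\beta \cong \mathcal{H}_T\bigl((X \times_{T'} T)^{T(\beta)}\bigr) \otimes_{\cO_{\mathbb{A}^n_\bC}} (\cO_{\mathbb{A}^n_\bC})_0.$$
On the right-hand side, a similar formula computes $\cE ll_{T'}(X)_\alpha$ in terms of $\mathcal{H}_{T'}(X^{T'(\alpha)})$ completed at the origin of $\mathfrak{t}'_\bC$, for each $\alpha \in \rho_f^{-1}(\beta)$. The key geometric computation is to identify the $T(\beta)$-fixed locus of the induced space: using the fiber decomposition of $X \times_{T'} T$ over $T/f(T')$, one writes $(X \times_{T'} T)^{T(\beta)}$ as a disjoint union indexed naturally by the coset information carried by $\rho_f^{-1}(\beta)$, with each piece equivalent to $X^{T'(\alpha)} \times_{T'} T$ for the appropriate subgroup $T'(\alpha) \subset T'$. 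The classical induction isomorphism $\mathcal{H}_T(Y \times_{T'} T) \cong \mathcal{H}_{T'}(Y)$ for Borel equivariant cohomology then produces the desired isomorphism of stalks.

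Next, I would promote the stalkwise comparison to an isomorphism of coherent sheaves. Grojnowski's construction assembles $\cE ll$ out of sections on overlapping analytic neighborhoods glued by transition data obtained from translations inside $\mathfrak{t}_\bC$ and $\mathfrak{t}'_\bC$. Since $f$ induces a linear map $\mathfrak{t}'_\bC \to \mathfrak{t}_\bC$ intertwining these translations, and since induction for Borel cohomology is natural in both the space and the group, the local comparisons automatically intertwine the transition isomorphisms on overlaps. Because induction is moreover a map of algebras on equivariant cohomology, the resulting global isomorphism respects the commutative algebra structure on both sides, as required.

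The main obstacle is the careful bookkeeping at points $\beta$ where the fiber $\rho_f^{-1}(\beta)$ is large, or where $f$ has nontrivial kernel so that $\rho_f$ is not an embedding: one must match the $T$-isotropy $T(\beta)$ with the collection of $T'$-isotropies $\{T'(\alpha) : \alpha \in \rho_f^{-1}(\beta)\}$ and verify that the corresponding fixed-point strata in $X \times_{T'} T$ assemble into precisely the induced decomposition needed to invoke the induction theorem. Once this identification is in place, the remaining verifications are essentially formal consequences of naturality.
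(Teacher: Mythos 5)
Your strategy differs from the paper's in a way that actually creates a genuine difficulty. You propose to identify the two sides stalk by stalk and then glue these local identifications into an isomorphism of sheaves. The gluing step is the weak point: having chosen an isomorphism on each stalk (or on each of Grojnowski's analytic patches), you must then verify that the chosen isomorphisms agree on overlaps, and the assertion that they ``automatically intertwine the transition isomorphisms'' by naturality is not something you have actually established. Naturality of the Borel-equivariant induction isomorphism is a statement about maps of spaces and groups; it does not, by itself, control the translation-twisted transition data in Grojnowski's construction without a careful argument that you have not supplied. This is precisely the kind of bookkeeping that is easy to get wrong, and it is the most delicate part of your proposed route.

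The paper avoids this issue entirely by reversing the order of quantifiers: it first constructs a single global candidate morphism $\cE ll_{T}(X\times_{T'} T) \to (\rho_f)_*\cE ll_{T'}(X)$ in $\mathrm{Coh}(\cM_T)$, using the $T'$-equivariant inclusion $i: X \to X \times_{T'} T$, the pull-back map it induces, the canonical identification $(\rho_f)^*\cE ll_{T}(X\times_{T'}T) \cong \cE ll_{T'}(X\times_{T'}T)$, and the $(\rho_f)^* \dashv (\rho_f)_*$ adjunction. Once a global morphism of coherent sheaves is in hand, the property of being an isomorphism is checked stalkwise, and at each stalk one reduces (as you also observe) to the known induction isomorphism for Borel equivariant cohomology. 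No gluing of a priori unrelated local isomorphisms is needed. I would recommend either adopting the paper's construct-then-check strategy, or, if you want to keep your local-to-global plan, spelling out precisely why your stalk isomorphisms are the stalks of a single sheaf map --- which, once done carefully, amounts to reconstructing the paper's global map anyway.
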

\begin{proof}
This is a general property of equivariant cohomology theories, but we sketch a proof for the convenience of the reader. Consider the  $T'$-equivariant map 
$$
i:X \to X\times_{T'} T, \quad  x \in X \mapsto \overline{(x, 1_{T'})} \in X\times_{T'} T
$$
Pull-back in elliptic cohomology yields a map of coherent sheaves  over $\cM_{T'}$ 
$$
\cE ll_{T'}(X\times_{T'} T) \to \cE ll_{T'}(X)
$$
Composing this with the natural isomorphism
$$
(\rho_f)^*\cE ll_{T}(X\times_{T'} T) \cong \cE ll_{T'}(X\times_{T'} T)
$$
we obtain a map 
$ 
(\rho_f)^*\cE ll_{T}(X\times_{T'} T) \to \cE ll_{T'}(X)$ in $Coh(\cM_{T'})$; and, by adjunction, a map in $Coh(\cM_{T})$, $$ \cE ll_{T}(X\times_{T'} T) \to (\rho_f)_*\cE ll_{T'}(X)$$
We need to prove that this last map is an isomorphism, and this can be checked on stalks. Thus we reduce to the analogous statement in equivariant singular cohomology, which is well known. 
\end{proof}

\subsubsection{GKM manifolds}
\label{equivarianttoric} 
From now on we will assume that 
$T$ is a complex, rather than a real, torus.  From the viewpoint of  equivariant homotopy theory, this makes no difference as complex algebraic tori are homotopy equivalent to their unitary subgroups, which are compact real tori.

 A  space $X$ equipped with a $T$-action 
 is \emph{equivariantly formal} if the spectral sequence $$
H^p(BT; H^q(X, \mathbb{R})) \Rightarrow H^{p+q}_T(X, \mathbb{R})
$$
induced by the fibration 
$X \times_T BT \to BT$  
 collapses.   We say that an even dimensional, compact manifold $X$ equipped with a $T$-action is  a \emph{GKM manifold} if
 \begin{itemize}
 \item $X$ is equivariantly formal
 \item The $T$-action has only finitely many  zero- and one-dimensional orbits
 \end{itemize}
Complex smooth toric varieties with the action of the maximal torus are GKM manifolds. By Theorem 1.2.2 of \cite{goresky1998equivariant} if $X$ is a GKM manifold, its equivariant cohomology can be computed from  its \emph{GKM graph} (see Definition \ref{gkmgraph} below) built from the zero- and one-dimensional orbits of the $T$-action.  This theorem was generalized to K-theory and elliptic cohomology by Knutson--Rosu \cite{rosu2003equivariant}. We formulate it in the setting of elliptic cohomology as Proposition \ref{pushout} below.  

Let us assume that $X$ is a GKM manifold. Let $\lambda: T \to \bC^*$ be a character of $T$. We  denote by $S^2_\lambda$ the \emph{representation sphere} for $\lambda$, i.e. the one point 
compactification of 
$\bC$ equipped with the $T$-action given by $\lambda$.   Each 1-dimensional orbit $O$ of the $T$-action on $X$ compactifies to a representation sphere: i.e. $
\overline{O} \cong S_{\lambda}$ for some character $\lambda$, 
where $\overline{O}$ is the closure of $O$ inside $X$. 
 If $T \stackrel{\lambda} \to  \mathbb{C}^*$  is a character of $T$, we denote $T_\lambda :=ker{\lambda} 
$. 
The datum of $\lambda$ is the same as a that of a homomorphism $\phi_\lambda: N_T \to \mathbb{Z}$. If $N_\lambda$ is the kernel of $\phi_\lambda$, then  $T_\lambda$ is equal to $ker(\phi_\lambda) \otimes \mathbb{C}^*$.

\begin{definition}
\label{gkmgraph}
The \emph{moment graph} $\Gamma$ of $X$ is an edge-labelled graph such that 
\begin{enumerate}
\item as vertices, the fixed points of the $T$-action
\item as edges, the one-dimensional orbits: an edge $\cO$ joins two vertices $v_1$ and $v_2$ if they lie on $\overline{\cO}$
\item edges are labelled by characters of $T$: an edge $\cO$ is labelled by $\lambda$ if $\overline{\cO}\cong S^2_\lambda$
\end{enumerate}
\end{definition}
We denote respectively  by $V_\Gamma$ and $E_\Gamma$ the set of vertices and edges of $\Gamma$. We denote by $\lambda_e$ the character labelling an edge $e \in E_\Gamma$, and we set $T_e:=T_{\lambda_e}$.

  \begin{proposition}
  \label{pushout}
  Let $X$ be a GKM manifold.  
 \begin{enumerate}
 \item  The variety $\cM_X$ is the colimit in the category of schemes of the diagram
 $$
 \big [ \coprod_{v \in \Gamma_X} \cM_T \rightrightarrows \coprod_{e \in E_\Gamma} \cM_{T_{e}} \big ] \to \cM_X
$$
where the two arrows are the natural inclusions. 
\item The coherent sheaf $\cE ll_T(X)$ is the limit in $Coh(\cM_T)$ of the diagram of coherent sheaves of algebras
 $$
 \cE ll_T(X) \to \big [ 
  \bigoplus_{v \in \Gamma_X} \cO_{\cM_T}  
 \rightrightarrows  
 \bigoplus_{e \in V_\Gamma}  \cO_{\cM_{T_e}}
 \big ] 
$$
where the arrows are given by pull-back along the arrows in statement $(1)$. 
\end{enumerate}
 \end{proposition}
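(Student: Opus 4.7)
\medskip

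\noindent\textbf{Proof plan.}
The two statements are essentially dual to each other. Since $\cM_X$ is by definition the relative Spec of $\cE ll_T(X)$ over $\cM_T$, and relative Spec turns limits of quasi-coherent algebras into colimits of affine $\cM_T$-schemes, statement $(1)$ follows formally from statement $(2)$. So the substantive work is to prove $(2)$, and I plan to do this by (i) constructing a natural comparison map $\cE ll_T(X) \to F$, where $F$ is the proposed equalizer, and then (ii) verifying that it is an isomorphism on stalks, reducing to the classical GKM theorem of Goresky--Kottwitz--MacPherson applied to Borel equivariant cohomology via the stalk formula~(\ref{stalk}).

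To construct the map, I would use equivariant functoriality together with Proposition~\ref{prop: subtorus}. For each vertex $v \in V_\Gamma$, the closed embedding $\{v\} \hookrightarrow X$ induces $\cE ll_T(X) \to \cE ll_T(v) = \cO_{\cM_T}$, and assembling these yields $\cE ll_T(X) \to \bigoplus_v \cO_{\cM_T}$. For an edge $e$ with label $\lambda_e$ and endpoints $v_1, v_2$, the one-dimensional orbit $O_e$ is isomorphic to $T/T_e$, and Proposition~\ref{prop: subtorus} identifies $\cE ll_T(O_e)$ with $(\iota_e)_* \cO_{\cM_{T_e}}$, where $\iota_e : \cM_{T_e} \hookrightarrow \cM_T$. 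The two restriction maps $v_1, v_2 \hookrightarrow \overline{O_e}$ applied to $\cE ll_T(\overline{O_e}) \to \cE ll_T(O_e)$ give the two arrows $\cO_{\cM_T} \rightrightarrows \cO_{\cM_{T_e}}$, and consistency of the various restrictions produces a factorization $\cE ll_T(X) \to F$.

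To check that this is an isomorphism, I would compare stalks at an arbitrary closed point $\alpha \in \cM_T$. By the stalk formula~(\ref{stalk}), $\cE ll_T(X)_\alpha \cong \mathcal{H}_T(X^{T(\alpha)})_0$. On the right-hand side, the vertex contribution at $\alpha$ is $\bigoplus_v \cO_{\cM_T, \alpha}$, while the edge contribution $\cO_{\cM_{T_e}}$ is supported at $\alpha$ precisely when $\alpha \in \cM_{T_e}$, that is, when $T_e \in S(\alpha)$, equivalently $T(\alpha) \subseteq T_e$. The crucial observation is that $X^{T(\alpha)}$ is itself a $T$-equivariantly formal space with isolated fixed points (the same as $X^T$) and one-dimensional orbits exactly the $O_e$ for which $T(\alpha) \subseteq T_e$: this is because $T(\alpha)$ acts trivially on $O_e$ iff $T(\alpha)\subseteq T_e$. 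Thus the classical GKM theorem of \cite{goresky1998equivariant} applied to $X^{T(\alpha)}$ identifies $\mathcal{H}_T(X^{T(\alpha)})$ with the equalizer of $\bigoplus_v \cO_{\mathfrak{t}_\bC} \rightrightarrows \bigoplus_{e:\, T(\alpha)\subseteq T_e} \cO_{\mathfrak{t}_{\bC,e}}$. After completion at $0 \in \mathfrak{t}_\bC$, this matches the stalk of $F$ at $\alpha$ term by term, completing the stalk-wise comparison. A morphism of coherent sheaves on the locally noetherian scheme $\cM_T$ that is an isomorphism on all stalks is an isomorphism, so $\cE ll_T(X) \cong F$, and applying relative Spec gives $(1)$.

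The main obstacle I foresee is the identification in the third paragraph: one must carefully verify that $X^{T(\alpha)}$ is equivariantly formal and GKM with the expected subgraph, and that the canonical analytic identification underlying the stalk formula~(\ref{stalk}) intertwines the restriction maps used to define $\cE ll_T(X) \to F$ with the restriction maps appearing in the classical GKM description of $\mathcal{H}_T(X^{T(\alpha)})$. Both facts are standard in the equivariant homotopy theory of torus actions, but they require a careful bookkeeping of functoriality under the inclusions $X^T \subseteq X^{T(\alpha)} \subseteq X$ and the subtorus maps $T(\alpha) \hookrightarrow T_e \hookrightarrow T$.
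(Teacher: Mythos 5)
Your treatment of statement $(2)$ is a legitimate alternative to the paper's, which simply cites Corollary~4.3 of \cite{ganter2014elliptic}: you instead propose to unwind that result, constructing the comparison map directly and checking it on stalks via the stalk formula~(\ref{stalk}) and the classical GKM theorem. This is essentially how the cited result is proved (by Knutson--Rosu and Ganter), so it is a sound, more self-contained route, modulo the bookkeeping you flag. One minor slip: the stalk formula involves localization at $0\in\mathfrak{t}_\bC$, not completion, though this does not affect the argument since localization at a point is still exact and the comparison works in either setting.

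However, there is a genuine gap in your deduction of statement $(1)$. You correctly note that applying relative $\mathrm{Spec}$ to the equalizer diagram in $(2)$ yields $\cM_X$ as a colimit \emph{in the category of schemes affine over $\cM_T$}. But statement $(1)$ asserts the stronger claim that $\cM_X$ is the colimit in the category of \emph{all} schemes, and this does not ``follow formally'': colimits computed in the affine (or affine-over-$\cM_T$) category need not agree with colimits of the same diagram in schemes. The paper is careful about exactly this point: it observes that the diagram is an iterated pushout along closed immersions, and then invokes Theorem~3.4 of \cite{schwede2005gluing}, which says that pushouts of affine schemes along closed immersions computed in affine schemes are also pushouts in all schemes; this is then glued over an affine open cover of $\cM_T$. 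Without some argument of this kind, your proof of $(1)$ is incomplete.
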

 \begin{proof}
 Statement $(2)$ is a special case of Corollary 4.3 of \cite{ganter2014elliptic}, which  is a generalization of Theorem 1.2.2 of \cite{goresky1998equivariant} to equivariant K-theory and equivariant elliptic cohomology. Statement $(1)$ follows from $(2)$ by taking relative Spec. This implies  that $\cM_X$ is the colimit in the category of schemes that are affine over $\cM_T$, to prove that it is also the colimit in schemes requires an extra argument. 
 Note that the diagram is an iterated push-out of closed immersions. So on every affine patch of $\cM_T$ we can apply Theorem 3.4 of \cite{schwede2005gluing} which states that, under these assumptions, the colimit in the category of affine schemes coincides with the colimit in the category of schemes. It is easy to see that Schwede's result extends to our relative setting by taking the realization of the \v{C}ech nerve of an affine open cover of $\cM_T$, which is  a colimit in the category of schemes. 
  \end{proof}

\begin{remark}
As we mentioned Corollary 4.3 of  \cite{ganter2014elliptic} applies also to equivariant cohomology and equivariant K-theory. Let us explain the case of K-theory. The (degree 0 part of the) equivariant K-theory of $X$, $\cK_T(X)$, defines a coherent sheaf of algebras over 
$$
\cT:=Spec(\cK_T(pt)) \cong T
$$
Set $\cT_\tau := \mathbb{C}^* \otimes \langle \tau \rangle$, and $\cT_X:=Spec(\cK_T(X))$. Then statements $(1)$ and $(2)$ of Proposition \ref{pushout} remain valid for the equivariant K-theory $\cK_T(X)$, if we replace $\cM_X$, $\cM_\sigma$ and $\cM_\tau$ with 
$\cT_X$, $\cT_\sigma$ and $\cT_\tau$.
\end{remark} 
 
 \subsubsection{Categories}
 \label{prelimcat}
 Equivariant elliptic cohomology defines an object in $Coh(\cM_T)$, which is a small abelian category. To carry out some parts of our argument, however, it will be convenient to work with  \emph{triangulated} categories, such as the derived category of quasi-coherent sheaves. In fact, the formalism of triangulated categories is not  quite sufficient for our purposes. We will work instead inside the  $\infty$-category of \emph{triangulated} and \emph{presentable} $\mathbb{C}$-linear dg categories. Triangulated dg categories are by now a familiar replacement of triangulated categories, with better formal properties; viewing them as objects in an $\infty$-category allows to make sense of \emph{limits} and \emph{colimits} of triangulated categories. We stress that these techniques will play a rather small role in this paper, being limited to the proof Lemma \ref{h*lim} in Section \ref{sccc}; and to the formulation of some results in Section \ref{derivedequivalences}. 
  The subtleties of $\infty$-categories will play no actual role in our argument. Here we limit ourselves to introduce notations, and state for future reference a descent property. A standard reference for this  material is \cite{gaitsgory2019study}.
 \begin{enumerate}
 \item Let $\mathrm{Pr}^{\mathrm{L, tr}}$ the $\infty$-category of presentable and triangulated dg categories
 \item If $R$ is a dg algebra, we denote by $R\text{-}\mathrm{mod}$ the presentable triangulated dg category of $R$-modules. In our applications, $R$ will always be equal to $\bC$
 \item If $X$ is a stack, we denote by 
 $\Qcoh(X)$ the presentable triangulated dg category of quasi-coherent sheaves over $X$. This is a dg enhancement of the classical unbounded derived category of quasi-coherent sheaves over $X$. In our applications, $X$ will always be either a smooth variety, a smooth DM stack or a smooth Artin stack
 \item We denote by $\Perf(X)$ the full-subcategory of compact objects in $\Qcoh(X)$. This is a dg enhancement of the classical triangulated category of perfect complexes over $X$. Both $\Qcoh(X)$ and $\Perf(X)$ carry a symmetric monoidal structure
 \item If $f: X \to Y$ is a map of stacks, we have a pair of adjoint functors 
 $$
 f^*: \Qcoh(Y) \leftrightarrows \Qcoh(X): f_* 
 $$
 \item The functor $f^*$ is symmetric monoidal, and restricts to the subcategory of perfect complexes. This it induces a monoidal action of $\Qcoh(Y)$ on $\Qcoh(X)$, and of $\Perf(Y)$ on $\Perf(X)$
 \item Now assume that $Y$ is smooth, and that $f$ is proper. Then both functors $f^*$ and $f_*$ restrict to the subcategories of perfect complexes
 $$
 f^*: \Perf(Y) \leftrightarrows \Perf(X): f_* 
 $$
 \end{enumerate}
\begin{proposition}
\label{descent}
The assigment sending a scheme $X$ to the category $\Qcoh(X) \in \mathrm{Pr}^{\mathrm{L, tr}}$, and a map $f: X \to Y$ to the pull-back $f^*:\Qcoh(Y) \to \Qcoh(X)$, can be promoted to a colimit preserving contravariant functor. In particular if $X$ is isomorphic to a  colimit in the category of schemes 
$
X \cong \varinjlim_{i \in I} U_i
$
with structure maps $\alpha_{ij}:U_i \to U_j$, then $\Qcoh(X)$ is equivalent to a limit in $\mathrm{Pr}^{\mathrm{L, tr}}$ 
$$
\Qcoh(X) \simeq \varprojlim_{i \in I} \Qcoh(U_i)
$$
with structure maps $(\alpha_{ij})^*:\Qcoh(U_j) \to \Qcoh(U_i)$.
\end{proposition}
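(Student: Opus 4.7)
The plan is to break the statement into two standard ingredients and assemble them: (i) the affine case, where $\Qcoh(\Spec R) \simeq R\text{-mod}$, and the assignment $R \mapsto R\text{-mod}$ takes limits of commutative $\bC$-algebras to limits in $\mathrm{Pr}^{\mathrm{L, tr}}$; and (ii) Zariski descent for $\Qcoh$, which propagates the affine statement to arbitrary schemes. Both are essentially standard results in the derived-algebraic-geometry literature (Lurie's SAG, Gaitsgory--Rozenblyum), and the work is in organizing a diagram of affines that simultaneously computes the given colimit of schemes and the desired limit of dg categories.

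First I would treat the affine case. Under the (anti)equivalence between affine schemes and commutative $\bC$-algebras, a colimit $\Spec R \cong \varinjlim_i \Spec R_i$ in affine schemes corresponds to a limit $R \cong \varprojlim_i R_i$ in commutative algebras. The structure maps $\alpha_{ij}: \Spec R_i \to \Spec R_j$ come from ring maps $R_j \to R_i$, and the pull-back $(\alpha_{ij})^*$ is base change $R_i \otimes_{R_j} (-)$. The claim to prove is then that the natural comparison functor $R\text{-mod} \to \varprojlim_i R_i\text{-mod}$, given by $M \mapsto (R_i \otimes_R M)_i$, is an equivalence in $\mathrm{Pr}^{\mathrm{L, tr}}$; this is a standard consequence of the description of limits in $\mathrm{Pr}^{\mathrm{L, tr}}$ via right adjoints, together with the flat descent properties of module categories.

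Next I would bootstrap to general schemes via Zariski descent. Any scheme $X$ is the colimit in schemes of the \v{C}ech nerve of an affine open cover $\coprod_\alpha V_\alpha \to X$, and by Zariski descent the corresponding diagram of quasi-coherent sheaves has limit $\Qcoh(X)$. For a general colimit $X \cong \varinjlim_{i \in I} U_i$, choose compatible affine open covers of the $U_i$ indexed by some refined category $\tilde I$: this produces a diagram of affine schemes whose colimit is still $X$ and which factors through each $U_i$. By step (i), the limit of $\Qcoh$ along the refined diagram is equivalent to $\Qcoh(X)$; and by Zariski descent applied to each $U_i$, this limit also equals $\varprojlim_{i \in I} \Qcoh(U_i)$. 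Combining these gives the desired equivalence.

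The main obstacle, and the reason I would lean on \cite{gaitsgory2019study}, is the $\infty$-categorical bookkeeping: limits in $\mathrm{Pr}^{\mathrm{L, tr}}$ are not limits of underlying $\infty$-categories in a naive sense, and one must check that the comparison maps assembled from pull-backs are coherent, that the descent spectral sequence-type arguments lift to the dg level, and that the refinement of a general colimit diagram by affine covers can be done functorially. Once the relevant $\infty$-categorical machinery is accepted, the proof is essentially a formal gluing argument, and the rest is routine.
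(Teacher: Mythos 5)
The paper does not actually prove Proposition~\ref{descent}; it is stated as a known fact, with \cite{gaitsgory2019study} cited as a standard reference, and the surrounding text explicitly downplays the $\infty$-categorical subtleties. So there is no argument in the paper to compare against; I can only assess your sketch on its own merits.

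Your affine step contains a genuine gap. You claim that the assignment $R \mapsto R\text{-}\mathrm{mod}$ takes \emph{all} limits of commutative $\bC$-algebras to limits in $\mathrm{Pr}^{\mathrm{L, tr}}$, and you justify this by appealing to ``the flat descent properties of module categories.'' But the limit diagrams of rings that actually occur in this paper come from pushouts of schemes along \emph{closed immersions} (cf.\ Proposition~\ref{pushout}, Corollary~\ref{pushouttoric}, and the colimit presentation of $\cM_X$), and the corresponding ring maps --- restriction along a closed immersion --- are not flat. Flat descent therefore buys you nothing in those cases. Descent of $\Qcoh$ along pushouts by closed immersions (Milnor patching at the level of $\infty$-categories of modules) is a genuine theorem with a nontrivial proof; it cannot be reduced to ``the description of limits in $\mathrm{Pr}^{\mathrm{L, tr}}$ via right adjoints'' in a one-line way, and it has to be cited or proved as such rather than folded into flat descent.

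You should also be aware that the proposition as literally stated is stronger than what is true, which means no correct proof of it as written can exist, including yours. The assignment $X \mapsto \Qcoh(X)$ does not carry arbitrary colimits of schemes to limits in $\mathrm{Pr}^{\mathrm{L, tr}}$. For instance, the coequalizer in the category of schemes of the two maps $\mathbb{A}^1 \rightrightarrows \mathbb{A}^1$ given by the identity and by translation $x \mapsto x+1$ is $\Spec\bC$ (the only $\mathbb{Z}$-invariant regular functions are constants, and one checks the universal property for non-affine targets as well); but the corresponding limit of $\Qcoh(\mathbb{A}^1) \rightrightarrows \Qcoh(\mathbb{A}^1)$ is a category of quasi-coherent sheaves on $\mathbb{A}^1$ equipped with an equivariance datum under translation, which is certainly not equivalent to $\bC\text{-}\mathrm{mod}$. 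The statements that are actually true, and all that the paper really uses, are (a)~Zariski descent, for colimit presentations coming from open covers (this covers Lemma~\ref{h*lim}), and (b)~descent along closed-immersion pushouts, for the colimit presentation of $\cM_X$. A correct proof should restrict its scope to those two classes of colimits, treat the closed-immersion case by citing the appropriate theorem rather than flat descent, and not claim preservation of all colimits.
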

\section{Equivariant elliptic cohomology of toric varieties}
Throughout this section we fix a complex elliptic curve $E$. We let $T$ be a $n$-dimensional algebraic torus, and consider toric varieties with respect to a $T$-action.   We use the notations introduced in Section \ref{preliminaries}.

We start by briefly recalling basic notions from toric geometry. For a comprehensive introduction to toric geometry, we refer the reader to \cite{fulton1993introduction}. The main result of this Section is Theorem \ref{main1}, which shows that equivariant elliptic cohomology allows us to recover highly non-trivial  information on fixed points and 1-dimensional orbits of the $T$-action (although not the full GKM graph).

We denote by $M_T$ and $N_T$,  respectively, the character and cocharacter lattice of  $T$. 
Let $n$ be the dimension of $T$. Let $\Sigma \subset N_T$  be a fan. That is, $\Sigma$ is a set of strongly convex cones closed under taking faces and intersections. We can associate to a fan $\Sigma$ a 
$n$-dimensional complex  toric variety $X_\Sigma$, which is equipped with an action of $T$. Conversely, if $X$ is a toric $T$-variety,  we can associate to $X$  a fan $\Sigma_X$ in $N_T$.  

If $\tau$ is a cone in $\Sigma$, we denote by $\langle \tau \rangle \subset N_T$ the sublattice generated by the vectors in $\tau$. For all   $\tau \in \Sigma$ we set $U_{\tau}:= \Spec\  \bC[\tau^\vee]$, where $\tau^\vee:= \{c \in M_T \mid \tau(c) \ge 0\}$ is the dual cone.  For all $0 \leq k \leq n$, 
we denote by $\Sigma^k$ the collection of $k$-dimensional cones in the fan $\Sigma$. Occasionally, we will denote  the set of top-dimensional cones of $\Sigma$ also by $\Sigma^{top}$. Cones of $\Sigma$ are in bijection with torus-equivariant affine open subsets of $X_\Sigma$: if $\tau$ is a cone, we denote by $U_\tau \subset X_\Sigma$ the corresponding open subset.

%
%

  
Throughout the paper we make the following two assumptions on the toric variety 
$X=X_\Sigma$
\begin{enumerate}
\item $X$ is smooth. Equivalently, for all $\sigma \in \Sigma^{top}_X$ the sublattice $\langle \sigma \rangle$ is equal to  $N_T$
\item $X$ has a toric affine open cover by  open subsets  which are isomorphic to $\mathbb{A}^n$. Equivalently, all cones of $\Sigma_X$ lie on a top-dimensional cone
\end{enumerate}
\begin{definition}
\label{goodtoric}
We say that a toric variety $X$ is \emph{good} if it satisfies conditions $(1)$ and $(2)$.
\end{definition}

If $\tau$ is a cone of $\Sigma$, we set 
$\cM_\tau:= E \otimes \langle \tau \rangle$. Note the following special cases
\begin{itemize}
\item If $\tau$ is maximal, then $\cM_\tau=\cM_T$
\item If $\tau=0$, then $\cM_\tau = pt$
\end{itemize}
An inclusion of cones  
$\tau' \subset \tau$ induces a closed embedding $\cM_{\tau'} \subset \cM_\tau$. 

\begin{lemma}
\label{lemma:elliptic cones} 
Let $\tau$ be  a cone in $\Sigma_X$. We view $\cM_\tau$ as a closed subvariety of $\cM_T$ via the embedding  induced by the inclusion 
$\langle \tau \rangle \subset N_T$. 
Then there is an isomorphism $$\cE ll_T(U_{\tau}) \cong \cO_{\cM_{\tau}}$$  
\end{lemma}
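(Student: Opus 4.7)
The plan is to reduce the computation to the standard action of $T_\tau := \langle \tau \rangle \otimes \bC^*$ on $\bA^k$, where $k = \dim \tau$, and then exploit the $T_\tau$-equivariant contractibility of $\bA^k$. The two key inputs are the goodness of $X$, which allows us to split the torus, and Proposition \ref{prop: subtorus}.

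Since $\tau$ lies on a top-dimensional smooth cone, its primitive generators extend to a basis of $N_T$. This yields a splitting $N_T \cong \langle \tau \rangle \oplus N'$ and a corresponding splitting of tori $T \cong T_\tau \times T'$, with $T' := N' \otimes \bC^*$. Dualizing gives $\bC[\tau^\vee] \cong \bC[\tau^\vee \cap \langle \tau \rangle^*] \otimes \bC[(N')^*]$, hence a $T$-equivariant isomorphism $U_\tau \cong \bA^k \times T'$ in which $T_\tau$ acts standardly on $\bA^k$ and trivially on $T'$, while $T'$ acts by translation on itself and trivially on $\bA^k$. In particular there is a $T$-equivariant isomorphism $U_\tau \cong \bA^k \times_{T_\tau} T$. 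Applying Proposition \ref{prop: subtorus} to the inclusion $f : T_\tau \hookrightarrow T$ then gives
$$
\cE ll_T(U_\tau) \cong (\rho_f)_* \cE ll_{T_\tau}(\bA^k),
$$
where $\rho_f : \cM_{T_\tau} \to \cM_T$ is the closed embedding induced by $\langle \tau \rangle \hookrightarrow N_T$; its image is exactly $\cM_\tau$ by definition. It therefore suffices to identify $\cE ll_{T_\tau}(\bA^k)$ with $\cO_{\cM_{T_\tau}}$.

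For this, the scaling map $H : \bA^k \times [0,1] \to \bA^k$, $(v,s) \mapsto sv$, is $T_\tau$-equivariant (the standard action is linear) and realizes a $T_\tau$-equivariant deformation retraction of $\bA^k$ onto the origin. Equivariant elliptic cohomology is invariant under $T_\tau$-equivariant homotopy equivalences: its stalks are determined by Borel equivariant cohomology of fixed-point loci via formula (\ref{stalk}), which is itself homotopy invariant, and the Grojnowski gluing forces the global sheaf to be a homotopy invariant of the $T_\tau$-space. Hence $\cE ll_{T_\tau}(\bA^k) \cong \cE ll_{T_\tau}(\mathrm{pt}) = \cO_{\cM_{T_\tau}}$, and pushing forward along $\rho_f$ yields the claim.

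I expect no genuine obstacle: the lemma is essentially a formal consequence of the equivariant induction isomorphism together with homotopy invariance. The only points requiring care are the identifications, namely that the $T$-structure on $\bA^k \times_{T_\tau} T$ coming from right translation on the second factor matches the one on $U_\tau$ under the toric splitting, and that $\rho_f$ sends $\cM_{T_\tau}$ isomorphically onto the subvariety $\cM_\tau \subset \cM_T$ appearing in the statement; both are routine.
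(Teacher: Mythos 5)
Your proof is correct and follows the same route as the paper's: both identify $U_\tau \cong \bA^k \times_{T_\tau} T$, apply Proposition~\ref{prop: subtorus} to reduce to $T_\tau$ acting on $\bA^k$, and then invoke equivariant contractibility of $\bA^k$. You simply spell out the toric splitting and the identification of the image of $\rho_f$ with $\cM_\tau$ in more detail than the paper does.
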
 
\begin{proof}
Let $m$ be the rank of $N_\tau$. There is an  embedding  $( \bC^{*})^m \cong \langle \tau \rangle \otimes \mathbb{C}^* \to T = N_T \otimes \mathbb{C}^*$. We have isomorphisms $$U_\tau  \cong \bC^{m} \times_{ ( \bC^{*})^m} T \cong   \bC^{m} \times  ( \bC^{*})^{n-m}
$$ 
Thus it follows from Proposition \ref{prop: subtorus} that $$\cE ll_T(U_{\tau}) \cong i_*\cE ll_{(\bC^*)^m} (\bC^m) \cong     \cO_{\cM_\tau}$$
where the first isomorphism comes from the fact that  $\bC^m$ is equivariantly homotopic to the point.
\end{proof}

We will give an elementary proof of Proposition  \ref{pushout} for toric varieties, based on the properties of the torus equivariant affine cover. See Lemma \ref{mayervietoris} and Corollary \ref{pushouttoric} below. This will allow us  in particular to extend  Proposition  \ref{pushout} to good toric varieties which are not necessarily compact, and therefore are not strictly speaking GKM manifolds: whereas compactness in assumed in Proposition  \ref{pushout}.

Let $(\Sigma^{n-1}_X)^* \subset 
\Sigma^{n-1}_X$ be the subset of $n-1$-dimensional cones $\tau$ that lie on two different top dimensional cones $\sigma$ and $\sigma'$. Note  that if 
$X$ is proper, i.e. $\Sigma_X$ is a complete fan, $(\Sigma^{n-1}_X)^*$ is equal to 
$\Sigma^{n-1}_X$.

\begin{lemma}
\label{mayervietoris}
Let $X$ be an $n$-dimensional toric  variety. We enumerate from 1 to $m$ the maximal cones of 
$\Sigma_X$
$$
\Sigma_X^{top}=\{\sigma_i\}_{i \in I}, \quad 
I = \{1, \ldots, m\}
$$
There is an exact sequences in $Coh(\cM_T)$ 
\begin{equation}
\label{longexact}
0 \to \cE ll_T(X) \stackrel{\rho_1}\to \bigoplus_{i \in I} \cO_{\cM_T} \stackrel{\rho_2}\to 
\bigoplus_{i, j \in I, i < j} \cO_{\cM_{\sigma_i \cap \sigma_j}} \to \ldots   \to 
 \cO_{\cM_{\sigma_1 \cap \ldots \cap \sigma_m}}  \to 0
\end{equation}
Further, away from a closed subscheme of codimension $2$,  $\rho_2$ induces an isomorphism 
$$
coker(\rho)  \cong  \bigoplus_{\tau \in (\Sigma^{n-1}_X)^*} \cO_{\cM_\tau}
$$
\end{lemma}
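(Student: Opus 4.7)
The plan is to realize (\ref{longexact}) as the \v{C}ech complex of $\cE ll_T$ associated to the toric affine open cover $\mathfrak{U}=\{U_{\sigma_i}\}_{i\in I}$ of $X$, and then to read off the cokernel from its first two nontrivial terms. The construction of $\cE ll_T$ in \cite{ganter2014elliptic} assembles the sheaf by gluing local models built from Borel equivariant cohomology; consequently it inherits Mayer--Vietoris descent with respect to $T$-equivariant open covers of $X$ from the corresponding property of $\mathcal{H}_T$. Applied to $\mathfrak{U}$, this yields the complex
$$ 0 \to \cE ll_T(X) \to \bigoplus_i \cE ll_T(U_{\sigma_i}) \to \bigoplus_{i<j} \cE ll_T(U_{\sigma_i}\cap U_{\sigma_j}) \to \cdots $$
For each nonempty $J\subset I$, standard toric geometry identifies $\bigcap_{i\in J}U_{\sigma_i}$ with $U_{\sigma_J}$ where $\sigma_J := \bigcap_{i\in J}\sigma_i$. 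Applying Lemma \ref{lemma:elliptic cones} to each term replaces $\cE ll_T(U_{\sigma_J})$ by $\cO_{\cM_{\sigma_J}}$, and since each $\sigma_i$ is top-dimensional the term for $|J|=1$ gives $\cM_{\sigma_i}=\cM_T$. This matches (\ref{longexact}) on the nose.

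The main technical obstacle is exactness of this \v{C}ech complex. I would check exactness stalk-wise on $\cM_T$. By the stalk formula (\ref{stalk}), the complex at a closed point $\alpha\in\cM_T$ reduces to the \v{C}ech complex computing Borel equivariant cohomology of the fixed subvariety $X^{T(\alpha)}$ with respect to the induced cover, tensored over $\cO_{\bA^n_\bC}$ with the local ring at the origin. Since $X^{T(\alpha)}$ is itself a smooth toric variety and the induced cover is by $T$-equivariant toric affines $U_\tau$, exactness follows from the vanishing of higher Borel equivariant cohomology on smooth affine toric varieties together with the standard \v{C}ech-to-derived-functor argument: the $U_\tau$ and all their intersections are $\mathcal{H}_T$-acyclic. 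Localization at the origin is flat, so exactness is preserved.

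For the second claim, once exactness is established, $\coker(\rho_1)$ is identified with $\mathrm{im}(\rho_2)$, which embeds into $\bigoplus_{i<j}\cO_{\cM_{\sigma_i\cap\sigma_j}}$. Partition the index set of pairs $(i,j)$ by $\dim(\sigma_i\cap\sigma_j)$: pairs with $\dim(\sigma_i\cap\sigma_j)<n-1$ give summands supported on subschemes of codimension $\ge 2$ in $\cM_T$, while pairs with $\dim(\sigma_i\cap\sigma_j)=n-1$ are in bijection with $(\Sigma^{n-1}_X)^*$. Hence, modulo codimension-$2$ subschemes, the embedding takes values in $\bigoplus_{\tau\in(\Sigma^{n-1}_X)^*}\cO_{\cM_\tau}$. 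Surjectivity of the resulting map is a local check: away from codimension-$2$ loci any point of $\cM_T$ lies on at most one divisor $\cM_\tau$ with $\tau\in(\Sigma^{n-1}_X)^*$, and the restriction $\cO_{\cM_T}\to\cO_{\cM_\tau}$ is locally surjective, so any prescribed local section in the target can be realized as $f_i|_{\cM_\tau}-f_j|_{\cM_\tau}$ for suitable $(f_k)_{k\in I}\in\bigoplus_i\cO_{\cM_T}$ after discarding codimension-$2$ contributions at other pairs. This yields the claimed isomorphism away from codimension $2$.
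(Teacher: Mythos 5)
Your overall plan — build the \v{C}ech complex from the toric affine cover, convert each term to a structure sheaf via Lemma \ref{lemma:elliptic cones}, and read off $\mathrm{coker}(\rho_1)$ from the third term after discarding codimension-$\geq 2$ contributions — matches the paper's strategy. The second claim is handled in essentially the same way, though the paper's version is cleaner: restrict the whole exact sequence to $U := \cM_T \setminus \bigcup_{\tau \in \Sigma_X^{n-2}}\cM_\tau$; there the terms indexed by triple and higher intersections vanish outright, so the sequence truncates to a short exact sequence, and the third term restricts to $\bigoplus_{\tau\in(\Sigma^{n-1}_X)^*}\cO_{\cM_\tau}|_U$ via the same bijection you identify. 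Your surjectivity paragraph is doing more work than is needed, but it is not wrong.

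There is, however, a genuine gap in your justification of exactness of the \v{C}ech complex. You appeal to ``vanishing of higher Borel equivariant cohomology on smooth affine toric varieties'' and a ``\v{C}ech-to-derived-functor argument,'' asserting that the $U_\tau$ and their intersections are $\mathcal{H}_T$-acyclic. This is false: $\mathcal{H}_T(U_\tau) \cong \mathcal{H}_T(\mathrm{pt})$ is a polynomial ring, nonzero in every even degree, and there is no analogue of affine acyclicity in this topological setting — the \v{C}ech complex here is not computing derived-functor cohomology of a quasi-coherent sheaf, so the Leray/\v{C}ech comparison theorem you are implicitly invoking does not apply. The actual mechanism is different: every space appearing — $X$, the $U_{\sigma_I}$, and (after passing to stalks) their $T(\alpha)$-fixed loci — has vanishing \emph{odd} equivariant cohomology, and this is what makes the Mayer--Vietoris long exact sequences break into short exact sequences. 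The base case $m=2$ is exactly the Mayer--Vietoris short exact sequence, and the full exactness of (\ref{longexact}) follows by induction on $m$, writing $X = (U_{\sigma_1}\cup\cdots\cup U_{\sigma_{m-1}})\cup U_{\sigma_m}$ and splicing the inductively-obtained resolutions. Your stalk-wise reduction to $\mathcal{H}_T$ is a legitimate alternative route, but the reason the resulting \v{C}ech complex for $\mathcal{H}_T(X^{T(\alpha)})$ is exact is again the vanishing of odd Borel equivariant cohomology plus Mayer--Vietoris induction, not acyclicity of the $U_\tau$.
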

\begin{proof}
Consider the equivariant open cover of $X$ given by 
$\{U_{\sigma_i}\}_{i \in I}$. By Lemma \ref{lemma:elliptic cones}, the elliptic cohomology of $U_{\sigma_1 \cap \ldots \cap \sigma_n}$ is given by $$\Ell_T( U_{\sigma_1 \cap \ldots \cap \sigma_n}) \cong \cO_{\cM_{\sigma_1 \cap \ldots \cap \sigma_n}}$$ 
The first statement is a consequence of Mayer-Vietoris and the vanishing of the odd equivariant elliptic cohomology of smooth toric varieties.  In the base case  $m=2$ we obtain an exact sequence 
$$0 \to \Ell_T(X) \to \Ell_T(U_{\sigma_1}) \oplus \Ell_T(U_{\sigma_2})\to \Ell_T(U_{\sigma_1 \cap \sigma_2}) \to 0.$$ Induction on $m$ yields the sequence 
$$
0 \to \Ell_T(X) \to \bigoplus_{i \in I} \Ell_T(U_{\sigma_i})  \to 
\bigoplus_{i < j} \Ell_T(U_{\sigma_i\cap \sigma_j}) \to \ldots   
\to \Ell_T(U_{\sigma_1 \cap \ldots \cap \sigma_m}) \to 0
$$
We conclude applying again Lemma \ref{lemma:elliptic cones}.

Now let us consider the last  statement. We set 
$$
Z:= \bigcup_{\tau \in \Sigma_X^{n-2}} \cM_\tau \quad  \text{and} \quad U:= \cM_T-Z
$$
Note that $Z$ has codimension  
$2$. After restricting to $U$ we note that
\begin{enumerate}
\item All terms of the exact sequence (\ref{longexact}) vanish except the first three, as they all have support in $Z$
\item For the same reason, the natural inclusion $$ \bigoplus_{i, j \in I, i < j} \cO_{\cM_{\sigma_i \cap \sigma_j}} \to \bigoplus_{\tau \in (\Sigma^{n-1}_X)^*} \cO_{\cM_\tau}$$ becomes an isomorphism
\end{enumerate}  
As restriction to $U$ is an exact functor, this concludes the proof.
\end{proof}

\begin{remark}
We will spend much of the remainder of the paper studying properties of $\Ell_T(X)$ as a coherent sheaf over $\cM_T$. Thus, it might be interesting to remark that Lemma \ref{mayervietoris} allows us to easily compute the \emph{determinant} of 
$\Ell_T(X)$. Namely, there is an isomorphism  
$$
\mathrm{det}(\Ell_T(X)) \cong 
\cO_{E_T}(- \sum_{\tau \in (\Sigma_X^{n-1})^*} \cM_{\tau})
$$ 
 Indeed, by standard properties of determinants, the line bundle $\mathrm{det}(\Ell_T(X))$ can be computed by tensoring together the determinants of all terms, but the first, of sequence (\ref{longexact}). Further $\mathrm{det}(\cO_Z)$ vanishes as soon as $Z$ has codimension two or higher. This implies that only the second term of sequence (\ref{longexact}) contributes to the determinat, and we find that 
 $$
 \mathrm{det}(\Ell_T(X)) \cong \mathrm{det}( 
\bigoplus_{\tau \in (\Sigma^{n-1}_X)^*} \cO_{\cM_\tau}) \cong \bigotimes_{\tau \in (\Sigma^{n-1}_X)^*} \mathrm{det}(\cO_{\cM_\tau})  \cong \cO(- \sum_{\tau \in (\Sigma_X^{n-1})^*} \cM_{\tau})
 $$
\end{remark}

   \begin{corollary}
  \label{pushouttoric}
  Let $X$ be a good toric variety.  
 \begin{enumerate}
 \item  The variety $\cM_X$ is the colimit in the category of schemes of the diagram
 $$
 \big [ \coprod_{\tau \in (\Sigma^{n-1}_X)^*} \cM_{\tau} \rightrightarrows \coprod_{\sigma \in \Sigma_X^{top}} \cM_\sigma \big ] \to \cM_X
$$
where the two arrows are induced by the inclusion of  $\tau \in (\Sigma^{n-1}_X)^*$ in the top dimensional cones containing it. 
\item The coherent sheaf $\cE ll_T(X)$ is the limit in $Coh(\cM_T)$ of the diagram of coherent sheaves of algebras
$$
 \cE ll_T(X) \to \big [ 
  \bigoplus_{\sigma \in \Sigma_X^{top}} \cO_\cM  
 \rightrightarrows  
 \bigoplus_{\tau \in (\Sigma^{n-1}_X)^*}  \cO_{\cM_{\tau}}
 \big ] 
$$
where the arrows are given by pull-back along the arrows in statement $(1)$. 
\end{enumerate}
 \end{corollary}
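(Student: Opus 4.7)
The plan is to prove (2) directly from Lemma \ref{mayervietoris}, and then to obtain (1) from (2) by taking relative $\Spec$, replicating the strategy used in the proof of Proposition \ref{pushout}.

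For (2), the first three terms of the long exact sequence in Lemma \ref{mayervietoris} identify $\Ell_T(X)$ with the kernel of
$$
\rho_2 : \bigoplus_{\sigma \in \Sigma_X^{top}} \cO_{\cM_T} \longrightarrow \bigoplus_{i<j} \cO_{\cM_{\sigma_i \cap \sigma_j}},
$$
while the equalizer appearing in (2) is $\ker(\tilde{\rho}_2)$, where $\tilde{\rho}_2 = \pi \circ \rho_2$ is obtained by post-composing $\rho_2$ with the projection $\pi$ onto the codimension-one summands. The inclusion $\ker(\rho_2) \subseteq \ker(\tilde{\rho}_2)$ is tautological, and for the reverse inclusion I would argue that any section $(f_\sigma)$ satisfying the codimension-one compatibilities $f_\sigma|_{\cM_\tau} = f_{\sigma'}|_{\cM_\tau}$, for $\tau \in (\Sigma_X^{n-1})^*$ with $\sigma, \sigma'$ the two top cones containing $\tau$, automatically satisfies the analogous compatibility on every higher-codimension intersection. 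Concretely, for a face $\tau' = \sigma_i \cap \sigma_j$ of codimension $k \geq 2$, the top-dimensional cones of $\Sigma_X$ containing $\tau'$ form a good fan in $N_T/\langle \tau' \rangle$ (the link of $\tau'$, which inherits goodness from $\Sigma_X$); any two such top cones can be joined by a chain of adjacent top cones sharing codimension-one faces containing $\tau'$, and telescoping the codimension-one compatibilities along such a chain yields $f_{\sigma_i}|_{\cM_{\tau'}} = f_{\sigma_j}|_{\cM_{\tau'}}$.

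For (1), apply relative $\Spec$ over $\cM_T$ to the equalizer diagram of (2). This presents $\cM_X$ as a colimit of the corresponding pushout diagram in the category of affine schemes over $\cM_T$, and the passage to a colimit in all schemes is identical to the argument in the proof of Proposition \ref{pushout}: the diagram is an iterated pushout of closed immersions, so Theorem 3.4 of \cite{schwede2005gluing} applies on each affine patch of $\cM_T$, and the global colimit is assembled by realising the \v{C}ech nerve of an affine open cover.

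The main obstacle is the connectivity assertion used in the reverse-inclusion step: one must verify that the link of every codimension-$\geq 2$ face of $\Sigma_X$ is connected through its codimension-one subcones. This is automatic when the link fan is complete, and in general can be established by a direct combinatorial argument or by appeal to the irreducibility of the corresponding orbit closure $V(\tau')$. Once this combinatorial input is secured, everything else in the Corollary reduces to Lemma \ref{mayervietoris} together with the formal $\Spec$ manipulation already carried out in the proof of Proposition \ref{pushout}.
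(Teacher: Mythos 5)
Your top-level strategy matches the paper's exactly: derive (2) from Lemma \ref{mayervietoris}, then obtain (1) by relative $\Spec$ plus Schwede's gluing theorem. The paper's own proof of (2) is considerably terser than yours---it simply asserts that the kernel of the codimension-one compatibility map ``follows from Lemma \ref{mayervietoris}''---so you have correctly isolated the step that actually requires an argument: the inclusion $\ker(\pi\circ\rho_2)\subseteq\ker(\rho_2)$, where $\pi$ is the projection onto the codimension-one summands. Your telescoping idea is the natural way to try to establish it.

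However, the connectivity input you flag as ``the main obstacle'' is not a technicality you can wave away, and in particular neither of the two routes you propose closes it. Irreducibility of the orbit closure $V(\tau')$ does \emph{not} imply that the top cones containing $\tau'$ are connected through codimension-one faces: take $N_T=\mathbb{Z}^2$ and the fan generated by $\sigma_1=\mathrm{cone}(e_1,e_2)$ and $\sigma_3=\mathrm{cone}(-e_1,-e_2)$ together with their faces. This is a good toric variety in the sense of Definition \ref{goodtoric} (both maximal cones are unimodular and every cone lies on a maximal one), $V(\{0\})=X$ is irreducible, yet the star of the zero cone has two maximal cones meeting only in $\{0\}$, hence no telescoping chain exists. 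In this example $(\Sigma^{n-1}_X)^*=\varnothing$, so the equalizer in statement (2) would be $\cO_{\cM_T}^{\oplus 2}$; but Lemma \ref{mayervietoris} gives a short exact sequence $0\to\Ell_T(X)\to\cO_{\cM_T}^{\oplus 2}\to\cO_{\{e\}}\to 0$ with the second map a nonzero difference of evaluations, so $\Ell_T(X)\subsetneq\cO_{\cM_T}^{\oplus 2}$ and the asserted equalizer description fails. So the gap is not just in your argument: as stated, the corollary needs an additional hypothesis (chamber-connectedness of every star, which is automatic for complete fans and for fans whose support is convex, and holds in all the examples the paper actually uses), and your proof should make that hypothesis explicit rather than claim it follows from irreducibility of orbit closures. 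Once that hypothesis is in place your telescoping argument does close the gap, and your treatment of (1) is correct and matches the paper.
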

 \begin{proof}
 As in the proof of Proposition \ref{pushout}, Statement $(1)$ follows from $(2)$ by taking relative Spec. In the case of good toric varieties statement $(2)$  follows from Lemma \ref{mayervietoris}. Indeed, since $Coh(\cM_T)$ is an abelian category, the equalizer can be computed as the kernel of the difference of the two maps, which is the natural morphism
 $$
  \bigoplus_{\sigma \in \Sigma_X^{top}} \cO_\cM  
 \to   
 \bigoplus_{\tau \in (\Sigma^{n-1}_X)^*}  \cO_{\cM_{\tau}}
 $$
It follows from Lemma \ref{mayervietoris} that the kernel of this map is isomorphic to $\Ell_T(X)$. 
  \end{proof}

%

\begin{proposition}
\label{vector bundle}
Let $X$ be a good toric variety. Then $\Ell_T(X)$ is a vector bundle of rank $m:= |\Sigma^{top}_X|$ over $\cM_T$.
\end{proposition}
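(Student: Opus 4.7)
By Corollary~\ref{pushouttoric} we have an identification
$$
\Ell_T(X) \;=\; \ker\!\Big( \bigoplus_{\sigma \in \Sigma_X^{top}} \cO_{\cM_T} \xrightarrow{\ f\ } \bigoplus_{\tau \in (\Sigma^{n-1}_X)^*} \cO_{\cM_\tau} \Big).
$$
The source is a free sheaf of rank $m = |\Sigma_X^{top}|$, so $\Ell_T(X)$ is torsion-free. Moreover, because the target is supported on the divisor $D = \bigcup_\tau \cM_\tau$, the restriction of $f$ to the open set $\cM_T \setminus D$ is the zero map, hence $\Ell_T(X)$ has generic rank $m$. The content of the proposition is therefore local freeness at every closed point of $\cM_T$.

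The plan is to check local freeness stalk-by-stalk, by combining the kernel presentation above with the stalk description \eqref{stalk}. Fix $\alpha \in \cM_T$ and let $T(\alpha) \subseteq T$ be the minimal subtorus such that $\alpha \in \cM_{T(\alpha)}$, as in Section~\ref{preleqell}. Then
$$
\Ell_T(X)_\alpha \;\cong\; \mathcal{H}_T\bigl(X^{T(\alpha)}\bigr) \otimes_{\cO_{\A^n_\bC}} (\cO_{\A^n_\bC})_0,
$$
so it suffices to exhibit $\mathcal{H}_T(X^{T(\alpha)})$ as a free module of rank $m$ over $\mathcal{H}_T(\mathrm{pt}) = \cO_{\A^n_\bC}$.

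Each connected component of the fixed locus $X^{T(\alpha)}$ is smooth and is itself a good toric variety, on which the subtorus $T(\alpha)$ acts trivially; the residual $T/T(\alpha)$-action together with the $T(\alpha)$-weight data gives a $T$-equivariant structure. Smooth toric varieties are equivariantly formal, so $H^*_T$ of each component is free over $\mathcal{H}_T(\mathrm{pt})$ of rank equal to the number of $T$-fixed points on that component. Summing over components and using the equality
$$
(X^{T(\alpha)})^T \;=\; X^T,
$$
we conclude that $\mathcal{H}_T(X^{T(\alpha)})$ is free of rank $|X^T| = m$. Consequently $\Ell_T(X)_\alpha$ is free of rank $m$ over $\cO_{\cM_T, \alpha}$, and since this holds at every closed point of the smooth scheme $\cM_T$, the sheaf $\Ell_T(X)$ is a vector bundle of rank $m$.

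The step that requires the most care is the verification that each connected component of $X^{T(\alpha)}$ really is a good toric variety for $T$, particularly when $X$ is good but not proper: one has to check that the fan of such a component is carved out from $\Sigma_X$ in a way that preserves both smoothness and the "every cone lies on a top-dimensional cone" condition, so that equivariant formality (and the identification $\dim H^*(Y;\bC) = |Y^T|$ for each component $Y$) applies. Once this geometric input is in place, the rest is a standard argument.
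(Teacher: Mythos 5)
Your argument is correct in outline, but it takes a genuinely different route from the paper's. The paper's proof is a one-line global argument: from Corollary~\ref{pushouttoric} the structure map $p\colon \cM_X \to \cM_T$ is finite and flat of degree $m$ onto the smooth variety $\cM_T$, and the pushforward of $\cO_{\cM_X}$ along a finite flat morphism of degree $m$ is automatically a rank-$m$ vector bundle. You instead check local freeness stalk-by-stalk using Grojnowski's formula \eqref{stalk}, reducing the statement to the assertion that $\mathcal H_T(X^{T(\alpha)})$ is a free module of rank $|X^T|$ over $\mathcal H_T(\mathrm{pt})$. This is a legitimate and instructive alternative: it makes explicit that the local structure of $\Ell_T(X)$ is governed by the ordinary equivariant cohomology of fixed-point loci, and it sidesteps having to argue about the global geometry of the (non--normal-crossing) spectral variety $\cM_X$.

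The trade-off is that the reduction does not come for free. The input you invoke --- that each connected component $Y$ of $X^{T(\alpha)}$ is a good toric variety that is equivariantly formal, with $\dim_{\bC} H^*(Y;\bC) = |Y^T|$ --- is exactly the analogue, over $\mathbb{A}^n$ rather than $E^n$, of the proposition being proved. You are right that it is easier over $\mathbb{A}^n$ (one can use the grading, Kirwan surjectivity, or the piecewise-polynomial description of $H^*_T$), and you are also right to flag that the standard references establish equivariant formality for \emph{proper} toric varieties: for the non-proper good case the argument has to go through the same Mayer--Vietoris presentation that the paper develops in Lemma~\ref{mayervietoris}, or through a suitable Bialynicki--Birula-type decomposition, and a careful write-up would need to either cite a reference covering the non-compact case or reproduce the argument. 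Finally, your characterization of $T(\alpha)$ as ``the minimal subtorus such that $\alpha\in\cM_{T(\alpha)}$'' is not quite the paper's definition ($T(\alpha)$ is the intersection of \emph{all} subtori $T'$ with $\alpha\in\cM_{T'}$, and $\alpha$ need not lie in $\cM_{T(\alpha)}$ itself); this is a harmless slip that does not affect the rest of the argument, but it is worth correcting.
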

\begin{proof}
It follows from Corollary \ref{pushouttoric} that the map 
$p:\cM_X \to \cM_T$ is a finite and flat morphism of degree $m$. Under these assumptions, the pushforward of a line bundle is a rank $m$ vector bundle, and this applies in particular to $
p_*(\cO_{\cM_X}) \cong \Ell_T(X). 
$  
\end{proof}

 \begin{lemma}
 \label{main}
Let $X$ be a good toric variety. Then 
\begin{enumerate}
\item The dimension of the space of cosections of $\Ell_T(X)$ is equal to the number of top dimensional cones in $\Sigma_X$: in symbols 
$$\mathrm{dim}_{\bC}\big ( \Hom_{Coh(\cM_T)}( \Ell_T(X), \cO_{\cM_T}) \big ) = |\Sigma^{top}_X|=:m$$
\item Consider the canonical map 
$$
\psi: 
 \Ell_T(X) \to  \cO_{\cM_T} \otimes \big (\Hom_{Coh(\cM_T)}(\Ell_T(X), \cO_{\cM_T})\big )^*
$$ Then, away from a closed subscheme of codimension $2$,  there is an isomorphism 
$$
coker(\psi)   \cong  \bigoplus_{\tau \in (\Sigma^{n-1}_X)^*} \cO_{\cM_\tau}
$$
\end{enumerate}
\end{lemma}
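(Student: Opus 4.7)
The plan is to prove part (1) using Serre duality combined with a combinatorial analysis of the \v{C}ech-style resolution from Lemma \ref{mayervietoris}, and to deduce part (2) as a formal consequence. For the lower bound $\dim_\bC \Hom(\Ell_T(X), \cO_{\cM_T}) \geq m$, I would exploit the inclusion $\Ell_T(X) \hookrightarrow \bigoplus_{\sigma \in \Sigma_X^{top}} \cO_{\cM_T}$ from Lemma \ref{mayervietoris}: projecting onto each $\sigma$-factor produces a cosection $\pi_\sigma : \Ell_T(X) \to \cO_{\cM_T}$. Since the cokernel of this inclusion is supported in positive codimension, the $\pi_\sigma$ restrict to independent coordinate projections on a dense open set, hence they are linearly independent.

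For the matching upper bound, I would apply Serre duality on the abelian variety $\cM_T$ (whose canonical bundle is trivial) to rewrite
\[
\Hom_{\cO_{\cM_T}}(\Ell_T(X), \cO_{\cM_T}) \;\cong\; H^n(\cM_T, \Ell_T(X))^*,
\]
and compute $H^n(\cM_T, \Ell_T(X))$ using the hypercohomology spectral sequence $E_1^{p,q} = H^q(\cM_T, \cF^p) \Rightarrow H^{p+q}(\cM_T, \Ell_T(X))$ attached to the resolution $\Ell_T(X) \simeq \cF^\bullet$ of Lemma \ref{mayervietoris}, with $\cF^p = \bigoplus_{|S|=p+1} \cO_{\cM_{\cap_{i\in S}\sigma_i}}$. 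The corner $(0,n)$ already contributes $E_1^{0,n} = \bigoplus_\sigma H^n(\cM_T, \cO_{\cM_T}) = \bC^m$, matching the lower bound. A useful reduction is the vanishing $H^j(\cM_\rho, \cO_{\cM_\rho}) = \Lambda^j H^1(\cM_\rho, \cO) = 0$ whenever $j > \dim \cM_\rho$, which kills most of the $E_1$ page stratum by stratum. The hardest step will be to show, via the combinatorics of the fan, that the differentials of the spectral sequence precisely cancel the remaining contributions $E_1^{p,n-p}$ for $p \geq 1$, leaving $E_\infty^{0,n} = \bC^m$. Following the strategy flagged in the introduction, this amounts to passing to a small combinatorial model of $\cF^\bullet$ built from the top-degree column cohomologies; by K\"unneth each restriction $H^{n-1}(\cM_T, \cO) \to H^{n-1}(\cM_\tau, \cO)$ is a projection onto a one-dimensional summand, so the first key instance $d_1 : \bigoplus_\sigma H^{n-1}(\cM_T, \cO) \to \bigoplus_{\tau \in (\Sigma_X^{n-1})^*} H^{n-1}(\cM_\tau, \cO)$ reduces to a tractable statement about the adjacency of cones in the fan, and the higher-$p$ analogues are of the same nature.

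Part (2) then follows immediately. Once the cosections $\{\pi_\sigma\}_\sigma$ are known to form a basis of $\Hom(\Ell_T(X), \cO_{\cM_T})$ by part (1), the canonical evaluation map $\psi : \Ell_T(X) \to \cO_{\cM_T} \otimes \Hom(\Ell_T(X), \cO_{\cM_T})^*$ is tautologically identified, via the basis dual to the $\pi_\sigma$, with the inclusion $\rho_1 : \Ell_T(X) \hookrightarrow \bigoplus_\sigma \cO_{\cM_T}$ of Lemma \ref{mayervietoris}. Therefore $\coker(\psi) = \coker(\rho_1)$, and the description of this cokernel away from codimension $2$ as $\bigoplus_{\tau \in (\Sigma^{n-1}_X)^*} \cO_{\cM_\tau}$ is exactly the second statement of Lemma \ref{mayervietoris}.
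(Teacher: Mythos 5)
Your framework for part (1) matches the paper through Serre duality: both approaches reduce the claim to computing the top coherent cohomology $H^n(\cM_T, \Ell_T(X)) \cong H^n(\cO_{\cM_X})$, and your lower bound argument (the projections $\pi_\sigma$ are linearly independent because restriction to a dense open where $\rho_1$ is an isomorphism is injective on $\Hom$) is correct. But the proposal stops short of actually establishing the upper bound, which is the heart of the matter. You correctly identify the hard step---showing that the differentials of the hypercohomology spectral sequence of the Mayer--Vietoris resolution $\cF^\bullet$ kill all contributions to $H^n$ other than $E_1^{0,n}\cong\bC^m$---and then dispose of it with the assertion that $d_1$ ``reduces to a tractable statement about the adjacency of cones in the fan'' and that ``the higher-$p$ analogues are of the same nature.'' That is a plan, not a proof. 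In the paper this computation is exactly Theorem \ref{coho}, which occupies all of Section \ref{sccc}: the argument there does not analyze the spectral sequence but instead constructs a custom affine cover $\mathfrak{U}_X$ of $\cM_X$ compatible with the standard cover of the normalization $\widetilde{\cM_X} \cong \coprod_\sigma E^n$, and shows by an explicit chain-level extension argument (Lemma \ref{smallcomplex2} and the proof of Theorem \ref{coho}) that the pullback $\pi^*: H^n(\cO_{\cM_X}) \to H^n(\cO_{\widetilde{\cM_X}})\cong\bC^m$ is an isomorphism. Even the $n=1$ case (Example \ref{examplep1}) requires a nontrivial construction of the witnessing cochain $\xi$. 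The K\"unneth reduction you propose for $d_1$ is plausible, but the antidiagonal $p+q=n$ has nonzero $E_1^{p,q}$ terms coming from iterated intersections of top cones of varying dimensions, the edge differentials out of $E_r^{0,n}$ must also be shown to vanish, and none of this is carried out or obviously reducible to the $d_1$ case. Until that analysis is done, the dimension count in part (1) is not established.

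Your part (2) argument, conditional on part (1), is correct and in fact slightly cleaner than the paper's. You observe that the components $\pi_\sigma$ of $\rho_1$ are linearly independent cosections and, by part (1), must form a basis, so the evaluation map $\psi$ written in that basis is literally $\rho_1$; then $\coker(\psi)=\coker(\rho_1)$ and Lemma \ref{mayervietoris} finishes the job. The paper instead argues that any two generically-isomorphic maps $\Ell_T(X)\to\cO_{\cM_T}^{\oplus m}$ differ by an element of $GL(m,\bC)$, hence $\coker(\psi')\cong\coker(\rho_1)$. Both are fine, and yours avoids the $GL(m)$ detour. But since part (2) is deduced from part (1), the gap above propagates.
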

\begin{proof}
Recall that $\Ell_T(X)$ is isomorphic $p_* \cO_{\cM_X}$, where $p: \cM_X \to \cM_T$ is the structure map. By Serre duality, we have that 
$$
\Hom_{Coh(\cM_T)}( p_*\cO_{\cM_X}, \cO_{\cM_T }) \cong (\mathrm{Ext}^n(\cO_{\cM_T }, p_*\cO_{\cM_X}))^* = 
(H^n(p_*\cO_{\cM_X}))^*
$$ Now the map $p$ is affine, and 
therefore the derived pushforward of $p$ coincides with the classical pushforward $p_*$. As a consequence we have an isomorphism 
$$H^n( p_*\cO_{\cM_X})\cong H^n ( \cO_{\cM_X})$$ 
Now the first part of the Lemma follows from Theorem \ref{coho}, that will be proved in Section \ref{sccc}, and states that 
$$
\mathrm{dim}_{\mathbb{C}}(H^n ( \cO_{\cM_X}))= |\Sigma^{top}_X|
$$

Let us prove the second part of the Lemma. There is a canonical map
$$
\psi: \Ell_T(X) \to  \cO_{\cM_T} \otimes \big (\Hom_{Coh(\cM_T)}(\Ell_T(X), \cO_{\cM_T})\big )^*
$$
obtained by dualizing the evaluation map.  
Upon choosing a basis of the vector space   of cosections $\Hom_{Coh(\cM_T)}(\Ell_T(X), \cO_{\cM_T})$, we obtain a map 
$$
\psi': \Ell_T(X) \to 
\bigoplus_{i=1}^{i=m} \cO_{\cM_T} 
$$
Up to the $GL(m, \mathbb{C})$-action on the second factor, $\psi'$ is the unique map between its source and its target to be generically (i.e. away from a closed locus) an isomorphism. Recall indeed that the rank  of $
 \Ell_T(X)$ is also equal to $m$. 
 
 Now, consider the first map in   exact sequence (\ref{longexact}) from Lemma 
 \ref{mayervietoris} 
 $$
 \rho_1: \Ell_T(X) \to 
\bigoplus_{i=1}^{i=m} \cO_{\cM_T} 
 $$
 Note that $\rho_1$ is also generically an isomorphism, thus there is a matrix $A \in GL(m, \bC)$ such that the diagram below commutes
 $$
 \xymatrix{
 \Ell_T(X) \ar[r]^-{\psi'} \ar[d]_-= & \bigoplus_{i=1}^{i=m} \cO_{\cM_T} \ar[d]^-{\cdot A}_-\cong \\
 \Ell_T(X) \ar[r]^-\rho & \bigoplus_{i=1}^{i=m} \cO_{\cM_T} }
 $$
It follows that there are isomorphisms 
 $$
 coker(\psi) \cong coker(\psi') \cong coker(\rho_1)
 $$
Then the statement follows by Lemma  \ref{mayervietoris}. 
\end{proof}

The following theorem is one of our main results.
\begin{theorem}
\label{main1}
Let $X$ and $X'$ be good toric varieties, and assume that their equivariant elliptic cohomology are isomorphic
$$
\Ell_T(X) \cong \Ell_T(X') \quad \text{in} \quad Coh(\cM_T)
$$
Then $\Sigma_X$ and $\Sigma_{X'}$ have  same number of maximal cones and there is a bijection $$
\phi: (\Sigma^{n-1}_ X)^* \to (\Sigma^{n-1}_ {X'})^*
$$ 
such that for all $\tau \in (\Sigma^{n-1}_ X)^*$, 
$\langle \tau \rangle = \langle \phi(\tau) \rangle$. 
\end{theorem}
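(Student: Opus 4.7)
The proof will be an almost immediate harvest of the structural results already established in Proposition \ref{vector bundle} and Lemma \ref{main}, since both invariants we need are extracted canonically from the coherent sheaf $\Ell_T(X)$.

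First, I would dispatch the claim about maximal cones. By Proposition \ref{vector bundle}, $\Ell_T(X)$ is a vector bundle of rank $|\Sigma^{top}_X|$ on $\cM_T$ (and similarly for $X'$). Since rank is a coherent-sheaf invariant, an isomorphism $\Ell_T(X)\cong\Ell_T(X')$ forces $|\Sigma^{top}_X|=|\Sigma^{top}_{X'}|=:m$.

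For the bijection $\phi$, the essential point is that the map $\psi$ of Lemma \ref{main}(2) is constructed functorially from $\Ell_T(X)$ — it is the dual of the evaluation on the space of cosections — so an isomorphism $\Ell_T(X)\cong\Ell_T(X')$ induces an isomorphism of the cokernels. Combining this with part (2) of Lemma \ref{main}, we obtain an isomorphism
\[
\bigoplus_{\tau\in(\Sigma^{n-1}_X)^*}\cO_{\cM_\tau}\;\cong\;\bigoplus_{\tau'\in(\Sigma^{n-1}_{X'})^*}\cO_{\cM_{\tau'}}
\]
on the open complement $V\subset\cM_T$ of a closed subscheme of codimension $2$ (say the union of the codimension-$2$ loci appearing in Lemma \ref{mayervietoris} applied to both $X$ and $X'$).

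To extract $\phi$ from this identity, I would argue by supports and multiplicities. Each rank $n-1$ sublattice $N'\subset N_T$ determines an $(n-1)$-dimensional abelian subvariety $\cM_{N'}:=E\otimes N'\subset\cM_T$, and distinct sublattices give distinct subvarieties. For any $\tau\in(\Sigma^{n-1}_X)^*$ the sheaf $\cO_{\cM_\tau}$ is supported on $\cM_{\langle\tau\rangle}$, and at a point $p$ of $\cM_{N'}$ that avoids every smaller $\cM_{N''}$ (a dense open subset of $\cM_{N'}$, which meets $V$ because $V$ omits only a codimension-$2$ locus), the stalk of the left-hand sum is free of rank
\[
k_{N'}(X)\;:=\;\#\bigl\{\tau\in(\Sigma^{n-1}_X)^*:\langle\tau\rangle=N'\bigr\},
\]
and similarly on the right. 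The isomorphism of cokernels therefore forces $k_{N'}(X)=k_{N'}(X')$ for every rank $n-1$ sublattice $N'$, which is precisely the existence of a bijection $\phi:(\Sigma^{n-1}_X)^*\to(\Sigma^{n-1}_{X'})^*$ with $\langle\tau\rangle=\langle\phi(\tau)\rangle$.

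The only mildly delicate point is ensuring that the multiplicity comparison can be done on stalks inside $V$, i.e.\ that $V\cap\cM_{N'}$ is nonempty and in fact dense in $\cM_{N'}$ whenever $k_{N'}(X)$ or $k_{N'}(X')$ is positive; this is immediate since $V$ omits only a codimension-$2$ subset of $\cM_T$ whereas each $\cM_{N'}$ has codimension $1$. No harder ingredient is needed: the theorem really just packages the canonical reconstruction of the summand structure of the cokernel of $\psi$.
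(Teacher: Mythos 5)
Your proposal is correct and follows essentially the same route as the paper's proof: rank of the vector bundle for the count of maximal cones, then the functoriality of the cosection map $\psi$ from Lemma \ref{main} to get the generic isomorphism of cokernels, then comparison at generic points of the divisors $\cM_{N'}$. Your explicit multiplicity count $k_{N'}(X)$ and the remark that a codimension-$2$ locus cannot swallow a dense open in a codimension-$1$ subvariety is just a more carefully spelled-out version of the paper's sentence about "restricting to generic points of the smooth divisors."
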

 \begin{proof}
By Lemma \ref{vector bundle}, $\Ell_T(X)$ is a vector bundle of rank $|\Sigma^{top}_X|$ and this implies the first part of the claim. Now, by Lemma \ref{main}, if $\Ell_T(X)$ is isomorphic to $\Ell_T(X')$  there is an isomorphism
\begin{equation}
\label{generic}
\bigoplus_{\tau \in (\Sigma^{n-1}_X)^*} \cO_{\cM_{\tau}} \cong \bigoplus_{\tau' \in (\Sigma^{n-1}_{X'})^*} \cO_{\cM_{\tau'}}
\end{equation}
away from a subscheme of codimension 2. Restricting isomorphism (\ref{generic}) to the generic points of the smooth divisors in $\cM_T$ yields a bijection 
$$
\phi: \Sigma^{n-1}_{X'} \to \Sigma^{n-1}_{X}$$
with the property that for every $\tau \in  \Sigma^{n-1}_{X'}$, the divisors 
$\cM_\tau$ and $\cM_{\phi(\tau)}$ are equal. We conclude because of the evident double implication
$$
 \cM_\tau = \cM_{\phi(\tau)} \Longleftrightarrow \langle \tau \rangle = 
\langle \phi(\tau) \rangle  
$$
 \end{proof}
 
 \subsection{The case of toric surfaces}
Let $X$ and $X'$ be good toric varieties. Theorem \ref{main1} provides necessary conditions for $\Ell_T(X)$ and $\Ell_T(X')$ to be isomorphic. We do not expect that  these conditions are sufficient, in general. However it is easy to see that they are sufficient  if $X$ and $X'$ are toric surfaces. We   prove this  in this section. 
 
 Let $X$ be a good toric surface. We fix a $1$-dimensional cone $\tau$ of $\Sigma_X$ and number clock-wise the remaining one-dimensional cones starting from $\tau_1:=\tau$, $$
\Sigma^1_X=\{\tau_1, \ldots, \tau_m\}
$$ 
We also number from $1$ to $N$  he maximal cones of 
$\Sigma$ 
$$\Sigma_X^{top}=\{\sigma_1, \ldots, \sigma_m\}$$ 
We do not need to impose any compatibility between the orderings of $\Sigma_X^1$ and $\Sigma_X^{top}$.

Let $A_X \in M_m(\mathbb{Z})$ be the (weighted) incidence matrix  defined as follows
\begin{itemize}
\item $a_{i, j} = 1$ if the following two conditions are satisfied: $\tau_j \subset \sigma_i$ and if $\tau_j \subset \sigma_{i'}$ then $i \leq i'$
\item if the following two conditions are satisfied: $\tau_j \subset \sigma_i$ and if $\tau_j \subset \sigma_{i'}$ then $i \geq i'$
\item $a_{i, j} = 0$ otherwise. 
\end{itemize}
Recall that we have  $\cO_{E_{\sigma}} =\cO_{E^2}$ for all maximal cones $\sigma$. 
Consider the following maps of coherent sheaves on $\cM_T$ 
$$
\bigoplus_{i=1}^{m} \cO_{\cM_T}  \stackrel{A_X} \longrightarrow 
\bigoplus_{i=1}^{m} \cO_{\cM_T}  \longrightarrow \bigoplus_{i=1}^{m} \cO_{\cM_{\tau_i}}
$$
where the second map is the direct sum of the quotient morphisms 
$\, 
\cO_{\cM_T} \to \cO_{\cM_{\tau_i}}
$. 
Let $\rho_X$ be the composite map. Then $\rho_X$ is equal to the map $\rho_2$ considered in Lemma \ref{mayervietoris}. Thus, by Lemma \ref{mayervietoris}, the kernel of $\rho_X$ is  isomorphic to the equivariant elliptic cohomology of $X$
\begin{equation}
0 \to \Ell_T(X) \to \bigoplus_{i=1}^{i=m} 
\cO_{\cM_T}  
\stackrel{\rho_X} \longrightarrow 
\bigoplus_{i=1}^{i=m} 
\cO_{\cM_{\tau_i}}
\end{equation}

\begin{proposition}
\label{reconstrucsurf}
Let $X$ and $X'$ be two good toric surfaces. Assume that there is a bijection $ 
\alpha: \Sigma^1_X \to \Sigma^1_{X'}$ such that for all $\tau \in \Sigma^1_X$, $\langle \tau \rangle = \langle \phi(\tau) \rangle$. Then there is an isomorphism $\Ell_T(X) \cong \Ell_T(X')$.
\end{proposition}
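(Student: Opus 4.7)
The strategy is to realize $\Ell_T(X)$ and $\Ell_T(X')$ as kernels of the explicit maps $\rho_X$ and $\rho_{X'}$ introduced just before the statement, and then produce an isomorphism of the kernels from the data of $\alpha$.

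First, the bijection $\alpha$ together with the equality $\langle \tau \rangle = \langle \alpha(\tau) \rangle$ induces a canonical isomorphism of targets
$\bigoplus_i \cO_{\cM_{\tau_i}} \cong \bigoplus_i \cO_{\cM_{\alpha(\tau_i)}}$,
since each $\cM_\tau$ depends on $\tau$ only through its sublattice. The sources $\bigoplus_i \cO_{\cM_T}$ are determined up to isomorphism by their rank $|\Sigma^{\mathrm{top}}|$, and the combinatorial identity $2|\Sigma^{\mathrm{top}}| = |(\Sigma^1)^*| + |\Sigma^1|$ specific to $2$-dimensional fans forces the source ranks to agree (after matching $(\Sigma^1)^*$ on the two sides, which is implicit in comparing the maps $\rho_X$ and $\rho_{X'}$). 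It then suffices to exhibit an automorphism $\Phi$ of the source making the resulting square commute up to an automorphism of the target; $\Phi$ will restrict to the desired iso $\Ell_T(X) \xrightarrow{\sim} \Ell_T(X')$.

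To construct $\Phi$, I would analyze the dual graph $G_X$ whose vertices are the maximal cones and whose edges, labeled by the divisors $\cM_\tau$, correspond to the elements of $(\Sigma^1_X)^*$. In dimension two every maximal cone has exactly two one-dimensional faces, so each vertex of $G_X$ has degree at most two, and $G_X$ is a disjoint union of paths and cycles. Accordingly, $\ker(\rho_X)$ decomposes as a direct sum over the connected components of $G_X$. For a path component of length $p$ with edge labels $L_1,\dots,L_{p-1}$, the telescoping substitution $z_1 = y_1$, $z_k = y_k - y_{k-1}$ identifies the component kernel with
$$
\cO_{\cM_T}\oplus \cO_{\cM_T}(-\cM_{L_1})\oplus\cdots\oplus \cO_{\cM_T}(-\cM_{L_{p-1}}),
$$
manifestly depending only on the unordered multiset of labels. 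For a cycle component of length $p$ with labels $L_1,\dots,L_p$, the same substitution produces the subsheaf of this direct sum cut out by the single closing constraint $z_2 + \cdots + z_p \in \cO_{\cM_T}(-\cM_{L_p})$.

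The final step is to match the path/cycle decompositions of $G_X$ and $G_{X'}$ using $\alpha$ on edge labels together with the equality of vertex counts; the component-wise formulas then assemble into the desired isomorphism $\Ell_T(X)\cong \Ell_T(X')$. The main obstacle, I anticipate, is the cycle case: the cycle kernel is a nontrivial extension rather than a direct sum of line bundles, and establishing its isomorphism-invariance under a permutation of the edge labels requires exhibiting an explicit change of basis on the source that trivializes the apparent dependence on the cyclic ordering. A secondary issue is the careful bookkeeping of the codimension-two contributions to the Mayer--Vietoris sequence of Lemma \ref{mayervietoris}, which must be shown to interact compatibly with the decomposition above.
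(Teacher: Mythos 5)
Your proposal takes a genuinely different route from the paper. The paper's proof reduces by induction to the case where $\Sigma_X$ and $\Sigma_{X'}$ differ by flipping a single ray $\tau_1 \mapsto -\tau_1$, and then exhibits an explicit invertible matrix $M$ of row operations with $A_X = A_{X'}M$; the commutative square then carries $\ker(\rho_X)$ to $\ker(\rho_{X'})$. You instead propose a structural description: decompose the dual graph $G_X$ into path and cycle components and compute the kernel component by component. Both approaches stand on the same pre-proposition setup, but yours aims at a closed-form answer rather than an inductive chain of flips.

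Two comments on your obstacles. First, the cycle case, which you flag as the main obstacle, is in fact easier than you fear. Instead of the telescoping variables $z_k = y_k - y_{k-1}$, set $w_i := y_i - y_{i+1}$ (indices cyclic). The constraints $y_i - y_{i+1}\in \cO(-\cM_{\tau_i})$ become $w_i\in \cO(-\cM_{\tau_i})$, and the single closing relation is $\sum_i w_i = 0$. Together with the free variable $y_1$, the cycle kernel is
\[
\cO_{\cM_T}\ \oplus\ \ker\Bigl(\bigoplus_i \cO_{\cM_T}(-\cM_{\tau_i}) \xrightarrow{\ \Sigma\ } \cO_{\cM_T}\Bigr),
\]
which is manifestly symmetric in the labels $\tau_i$ and hence depends only on the unordered multiset $\{\langle\tau_i\rangle\}$, with no need to untangle the cyclic ordering. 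Second, the genuine gap is in your matching step. The hypothesis gives a bijection $\alpha$ on \emph{all} rays $\Sigma^1$ preserving sublattices, not on the interior rays $(\Sigma^1)^*$ and not on the graph structure. Your identity $2|\Sigma^{\mathrm{top}}| = |(\Sigma^1)^*| + |\Sigma^1|$ is correct, but it pins down $|\Sigma^{\mathrm{top}}|$ only once you already know $|(\Sigma^1)^*|$, which the hypothesis does not control — invoking ``matching $(\Sigma^1)^*$, implicit in comparing $\rho_X$ and $\rho_{X'}$'' is circular. The right way to close this is to observe that the paper's preamble to the proposition (where it enumerates $\Sigma^1_X = \{\tau_1,\dots,\tau_m\}$ and $\Sigma^{\mathrm{top}}_X = \{\sigma_1,\dots,\sigma_m\}$ with the \emph{same} $m$) is tacitly in the proper case, where $G_X$ is a single $m$-cycle with $(\Sigma^1)^* = \Sigma^1$; then the matching is automatic and your symmetric cycle formula finishes the argument cleanly. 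If you want to allow non-proper surfaces (and hence paths or several components), you must add the hypothesis that the number of maximal cones agree, and you must argue separately that $\alpha$ respects the partition into internal and boundary rays — neither of which follows from the stated hypotheses alone.
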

\begin{proof}
By induction, it is enough to assume that the fans $\Sigma_X$ and $\Sigma'_X$ differ by one ray pointing in opposite direction. We assume without loss of generality that $\Sigma_X$ and $\Sigma '_X$ differ by the ray $\tau_1$. 
That is, the ordered sets of 1-dimensional cones in the fans are given by
$$
\Sigma^1_X=\{ \tau_1, \tau_2, \ldots, \tau_m\}  \quad \Sigma^1_{X'} = \{ \tau_2, \ldots, \tau_i, -\tau_1, \tau_{i+1} , \cdots, \tau_m \}$$ 
It is now easy to see that the incidence matrix 
$A_{X'}$ can be obtained from  $A_{X}$ by row transformations. In particular, there is an invertible $m \times m$ matrix $M$  such that $A_{X} =A_{X'} M$. Let us denote the rows of $A_{X}$ by $r_j$ and the rows of $A_{X'}$ by $r_j'$. The matrix $M$ encodes the following row-transformations 
\begin{itemize}
\item $r_1'=r_1+r_2, r'_j=r_{j+1}$ for $2\le j \le i-1 $
\item $r_i'= r_1+r_{i+1}+\ldots+ r_N$
\item $r_{i+1}'= -r_1+r_{i+2} +\ldots+ r_N$ 
\item $r_j'=r_j$ for $j \ge i+2$
\end{itemize}

 As a consequence the diagram 
$$
\xymatrix{
\bigoplus_{i=1}^{m} \cO_{\cM_T} \ar@/^2.0pc/[rr]^-{\rho_X} \ar[r]^{A_X } \ar[d]^{M \cdot} &   
\bigoplus_{j=1}^{m} \cO_{\cM_T}  \ar[r] \ar[d]^-{=} & \bigoplus_{j=1}^{m} \cO_{\cM_{\tau_j }}\ar[d]^-{=}
\\
\bigoplus_{i=1}^{m} \cO_{\cM_T} 
\ar@/_2.0pc/[rr]^-{\rho_{X'}}
 \ar[r]^{A_{X'}} &   
\bigoplus_{j=1}^{m} \cO_{\cM_T}  \ar[r] & \bigoplus_{j=1}^{m} \cO_{\cM_{\tau_j}} 
}
$$ 
commutes, and therefore the kernels of the horizontal compositions $\rho_X$ and $\rho_{X'}$ are isomorphic. As these kernels compute elliptic cohomology, this concludes the proof.
\end{proof}

\begin{example}
\label{toricsurf}
Let us give an example of two smooth and proper toric surfaces $X$ and 
$X'$ which are not isomorphic as varieties, but have isomorphic equivariant elliptic cohomology. This shows in particular that $\Ell_T(X)$ as coherent sheaf over $\cM_T$ is not by itself  sufficient to reconstruct $X$, nor its full GKM graph. We define $\Sigma_X$ and $\Sigma_{X'}$ by plotting the generators of their one-dimensional cones below 
 $$
\begin{tikzpicture} 
\draw[->] (5,0) -- (6,-1); 
\draw[->] (5,0) -- (5,1); 
\draw[->] (5,0) -- (5,-1); 
\draw[->] (5,0) -- (4,0 ); 
\draw[->] (5,0) -- (4,-1 ); 
\draw[->] (5,0) -- (4,-2 );

\draw[->] (10,0) -- (11,0); 
\draw[->] (10,0) -- (10,1); 
\draw[->] (10,0) -- (11,-1); 
\draw[->] (10,0) -- (9,-1 );
\draw[->] (10,0) -- (10,-1 );
\draw[->] (10,0) -- (9,-2 );

\end{tikzpicture}
$$
Now, the equivariant elliptic cohomology of $X$ and $X'$ are isomorphic by Proposition \ref{reconstrucsurf}. However $X$ and $X'$ are  non isomorphic as varieties, even forgetting the $T$-action. This can be easily checked directly.  One can also note that the fans of toric varieties that are isomorphic as abstract varieties must  differ by an automorphism of $N_T$, see for instance \cite{68568} for references and dicussions; and no automorphism of $N_T$ sends $\Sigma_X$ to $\Sigma_{X'}$.
 \end{example}

   \section{A coherent cohomology calculation}
 \label{sccc}
In this section we will prove the following result.
\begin{theorem}
\label{coho}
Let $X$ be a good toric variety. The dimension of the cohomology group $H^n(\mathcal{O}_{\cM_X})$ is equal to the number of irreducible components of $\cM_X$. Equivalently, it is equal to the number of top dimensional cones in $\Sigma_X$
$$
\mathrm{dim}_{\mathbb{C}}(H^n(\mathcal{O}_{\cM_X})) = |\Sigma_X^{top}|
$$
\end{theorem}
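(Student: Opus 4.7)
The plan is to exploit the two-term coequalizer presentation of $\cM_X$ from Corollary~\ref{pushouttoric}, combined with the descent property of Proposition~\ref{descent}, in order to obtain a small combinatorial model for $R\Gamma(\cM_X,\mathcal{O})$. Writing $B:=\coprod_{\sigma\in\Sigma^{top}_X}\cM_\sigma$ and $C:=\coprod_{\tau\in(\Sigma^{n-1}_X)^*}\cM_\tau$, the coequalizer $C\rightrightarrows B\to\cM_X$ passes under $\Qcoh$ to an equalizer of $\infty$-categories, and taking $\mathrm{RHom}(\mathcal{O},\mathcal{O})$ produces a fiber sequence
$$R\Gamma(\cM_X,\mathcal{O})\longrightarrow \bigoplus_{\sigma} R\Gamma(\cM_\sigma,\mathcal{O})\xrightarrow{\;\alpha\;}\bigoplus_{\tau} R\Gamma(\cM_\tau,\mathcal{O}),$$
where $\alpha$ is the difference of the pullbacks along the two structure maps. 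Taking the associated long exact sequence at degree $n$ and using that $H^n(\cM_\tau,\mathcal{O})=0$ because $\dim \cM_\tau=n-1$, I obtain
$$\bigoplus_\sigma H^{n-1}(\cM_\sigma,\mathcal{O})\xrightarrow{\;\alpha\;}\bigoplus_\tau H^{n-1}(\cM_\tau,\mathcal{O})\to H^n(\cM_X,\mathcal{O})\to\bigoplus_\sigma H^n(\cM_\sigma,\mathcal{O})\to 0.$$
Since $\cM_\sigma=\cM_T\cong E^n$ for every top cone $\sigma$ (because $\langle\sigma\rangle=N_T$ by goodness), the rightmost group is $\mathbb{C}^m$ with $m=|\Sigma^{top}_X|$, and the theorem reduces to proving that $\alpha$ is surjective on $(n-1)$st cohomology.

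To make $\alpha$ explicit, I would use the Künneth formula for abelian varieties, which gives $H^*(E\otimes L,\mathcal{O})\cong\Lambda^*(L^\vee\otimes\mathbb{C})$ for every sublattice $L\subset N_T$. After fixing a trivialization of $\Lambda^n(M_T\otimes\mathbb{C})$, contraction with the top form identifies $H^{n-1}(\cM_T,\mathcal{O})\cong N_T\otimes\mathbb{C}$, and the pullback $H^{n-1}(\cM_T,\mathcal{O})\to H^{n-1}(\cM_\tau,\mathcal{O})\cong\mathbb{C}$ becomes the evaluation map $v\mapsto\chi_\tau(v)$, where $\chi_\tau\in M_T$ is the primitive character whose kernel in $N_T$ is $\langle\tau\rangle$. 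Under these identifications $\alpha$ takes the combinatorial form
$$\alpha(v_\bullet)(\tau)\;=\;\chi_\tau\bigl(v_{\sigma_1(\tau)}\bigr)-\chi_\tau\bigl(v_{\sigma_2(\tau)}\bigr),$$
with $\sigma_1(\tau),\sigma_2(\tau)$ the two top cones sharing the codimension-one face $\tau$.

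Surjectivity of $\alpha$ will then follow from a simple dual-basis argument that exploits smoothness of the fan. Given a fixed $\tau_0\in(\Sigma^{n-1}_X)^*$ pick any top cone $\sigma_*$ containing $\tau_0$. Since $X$ is good, the primitive ray generators $u_1,\ldots,u_n$ of $\sigma_*$ form a $\mathbb{Z}$-basis of $N_T$, and the characters $\chi_{\tau_j}\in M_T$ associated to the $n$ codimension-one faces $\tau_j$ of $\sigma_*$ are precisely (up to sign) the dual basis $u_1^*,\ldots,u_n^*$. Ordering the generators so that $\tau_0$ is the face opposite $u_{i_0}$, and setting $v_{\sigma_*}:=u_{i_0}$ while $v_\sigma:=0$ for $\sigma\neq\sigma_*$, one computes that $\alpha(v_\bullet)(\tau)=0$ whenever $\tau$ is not a face of $\sigma_*$, while $\alpha(v_\bullet)(\tau_j)=\pm u_j^*(u_{i_0})=\pm\delta_{i_0,j}$ on the faces of $\sigma_*$. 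Thus $\alpha(v_\bullet)=\pm e_{\tau_0}$, so every standard basis vector of $\bigoplus_\tau H^{n-1}(\cM_\tau,\mathcal{O})$ lies in the image of $\alpha$ and the map is surjective. The long exact sequence then collapses to the desired isomorphism $H^n(\cM_X,\mathcal{O})\cong\mathbb{C}^m$. The hardest part of the plan is the Hodge-theoretic identification of the connecting map with evaluation by the characters $\chi_\tau$; this requires a consistent dictionary between the cohomology of the sub-abelian varieties $\cM_\tau\subset\cM_T$ and the lattice data of the fan. Once this dictionary is set up, both the reduction to surjectivity and the dual-basis verification are entirely elementary.
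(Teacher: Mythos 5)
Your argument is correct, and it takes a genuinely different route from the paper.  The paper works entirely with explicit small \v{C}ech models: it produces a hands-on cover $\mathfrak{U}_X$ of $\cM_X$ adapted to the normalization $\pi\colon\widetilde{\cM_X}\to\cM_X$, builds a truncated complex $R^\bullet$ with one term per cohomological degree (Lemmas~\ref{colimit}--\ref{smallcomplex2}), and then proves directly, by a careful element-pushing argument, that $\pi^*\colon H^n(\cO_{\cM_X})\to H^n(\cO_{\widetilde{\cM_X}})\cong\mathbb{C}^m$ is an isomorphism.  You instead stay at the level of the two-term coequalizer from Corollary~\ref{pushouttoric}, apply Proposition~\ref{descent} to turn it into a fiber sequence of global sections, and reduce the theorem to surjectivity of the difference map $\alpha$ on $H^{n-1}$; the surjectivity then falls out of the Hodge theory of abelian varieties plus a dual-basis computation exploiting smoothness of the fan.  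Your route is shorter and more conceptual, replacing the paper's normalization-comparison and diagram chase with a linear-algebra lemma; the paper's route is more elementary in the sense that it never invokes Hodge theory of abelian varieties and exhibits explicit cocycles, at the cost of the elaborate construction of $\mathfrak{U}_X$ and $J_X$.  One remark: both approaches silently rely on Proposition~\ref{descent} applying to the specific colimit of Corollary~\ref{pushouttoric}, which is a gluing along \emph{closed} subschemes rather than Zariski descent; the paper makes the same move in the proof of Lemma~\ref{smallcomplex2}, so you are on equal footing.  A second remark: the identification of the restriction map $H^{n-1}(\cM_T,\cO)\to H^{n-1}(\cM_\tau,\cO)$ with $v\mapsto\chi_\tau(v)$ really only holds up to a nonzero scalar depending on the chosen trivializations of $H^{n-1}(\cM_\tau,\cO)$; this does not affect the surjectivity argument, but it would be worth stating explicitly to avoid the impression that an exact normalization has been chosen.
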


The trouble with computing the coherent cohomology of $\cM_X$ is that in general   $\cM_X$ is  not normal crossing. In fact it is easy to see that, if $X$ is proper, then  $\cM_X$ is normal crossing if and only if $X$ is isomorphic to the projective space. Most results on coherent cohomology of singular reducible schemes in the literature (such as \cite{bakhtary2010cohomology}) work under the  normal crossing assumption.  In that setting  theorems for coherent cohomology are available which would imply in particular Theorem \ref{coho}.

Thus, to prove Theorem \ref{coho}, we cannot appeal to general results. Instead, we will perform an explicit \v{C}ech calculation via elementary methods. We explain our strategy first in the case of $E^n$. We build an especially small and manageable complex which computes coherent \v{C}ech cohomology of $E^n$. The properties we will need are summed up in Lemma \ref{smallcomplex}. Then  we pass to the general case. 

\subsection{The coherent cohomology of $E^n$}
\label{sec:tccoe}
Let $e$ be the identity element in $E$, and choose a second closed point $p$  in $E$, $p \neq e$. Let $U_a=E - \{p\}$, $U_b=E-\{e\}$, and $U_c=U_a \cap U_b$. We equip the set $\{a, b, c\}$ with the partial order $c < a$ and  $c < b$. For every $i=(i_1, \ldots, i_n) \in I:=\{a, b\}^n$, we denote by $U_i$  the open subset 
$$
U_i:=  U_{i_0} \times \ldots \times U_{i_n} \subset E^n
$$
We can compute the coherent cohomology of $E^n$  via  the affine open cover  $\mathfrak{U}=\{U_i\}_{i \in \{a, b\}^n}$. Geometrically, this means viewing $E^n$ as the realization of the \v{C}ech nerve of the cover $\mathfrak{U}$. The drawback is that the \v{C}ech complex obtained in this way is rather large. We take a slightly different approach, by expressing $E^n$ as a smaller colimit of affine open subsets.  

Consider the indexing set $J=\{a, b, c\}^n$. For every $j=(j_1, \ldots, j_n) \in J$ we denote 
$$
U_j = U_{j_1} \times \ldots \times U_{j_n} \subset E^n. 
$$
We can equip $J$ with the product partial order. Note that $j$ and $j'$ are in $J$, then $j \leq j'$ if and only if $U_j \subseteq U_{j'}$. The poset $J$ has a least element given by $(c, \ldots, c)$ which corresponds to the open subset 
$U_c \times \ldots \times U_c$. All open subsets $U_j$ can be realized as intersections of elements of $\mathcal{U}$. In particular $U_c \times \ldots \times U_c$ is the intersection of all open subsets in $\mathcal{U}$. 

%

We obtain a $J$-indexed diagram in the category of schemes, such that all arrows are inclusions of open subsets.
\begin{lemma}
\label{colimit}
There is an isomorphism
$$\varinjlim_{j \in J} U_j \cong E^n$$
\end{lemma}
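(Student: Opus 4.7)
The plan is to recognise $\bigcup_{j \in J} U_j$ as all of $E^n$ and then to conclude, via the standard Zariski gluing principle, that this union already computes the colimit over $J$.

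First I would check that $\{U_j\}_{j \in J}$ exhausts $E^n$. Since $e \neq p$, the pair $\{U_a, U_b\}$ is an open cover of $E$, so the products $\{U_i\}_{i \in \{a,b\}^n}$ cover $E^n$; as this subfamily sits inside $\{U_j\}_{j \in J}$, the union of the $U_j$ is all of $E^n$.

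Next I would observe that $\{U_j\}_{j \in J}$ is closed under pairwise intersection, in a way that matches the poset structure on $J$. Writing $\wedge$ for the componentwise meet in the three-element poset $\{a,b,c\}$ (with $c < a$, $c < b$, and $a, b$ incomparable), a direct coordinatewise check yields $U_j \cap U_{j'} = U_{j \wedge j'}$, relying only on the identities $U_a \cap U_b = U_c$ and $U_c \cap U_x = U_c$ for $x \in \{a,b,c\}$. Since $j \wedge j' \leq j$ and $j \wedge j' \leq j'$ in the product order, the two inclusions $U_{j \wedge j'} \hookrightarrow U_j$ and $U_{j \wedge j'} \hookrightarrow U_{j'}$ are already arrows of the diagram.

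With these two ingredients the universal property of the colimit follows formally from Zariski gluing: a morphism $E^n \to Y$ to any test scheme $Y$ is the same datum as a compatible family of morphisms $f_i: U_i \to Y$ for $i \in \{a,b\}^n$, where compatibility is agreement on the overlaps $U_i \cap U_{i'}$. Since every such overlap lies in $\{U_j\}_{j \in J}$ and every transition in the $J$-indexed diagram is a genuine open inclusion of these opens, a compatible family on the cover extends uniquely to a cocone from the full $J$-indexed diagram to $Y$, and conversely every such cocone restricts to a compatible family on the cover. I do not anticipate any real obstacle; the entire argument is a routine application of gluing together with the combinatorial closure of $\{U_j\}$ under intersection.
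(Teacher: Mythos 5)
Your argument is correct, but it takes a genuinely different route from the paper's. The paper proves Lemma \ref{colimit} by induction on $n$: the base case $n=1$ is the push-out presentation $U \leftarrow U\cap V \rightarrow V$, and the inductive step splits $J$ into the three subposets $J_a, J_b, J_c$ indexed by the last coordinate, applies the inductive hypothesis to each, and uses the fact that colimits commute with colimits to reduce back to a two-open covering of $E^n$. You instead verify the universal property directly in one step: the family $\{U_j\}_{j \in J}$ is a cover of $E^n$ that is closed under pairwise intersection (with $U_j \cap U_{j'} = U_{j\wedge j'}$, where $\wedge$ is the coordinatewise meet), and the maximal elements $\{a,b\}^n \subset J$ already form a cover whose pairwise intersections all lie in the family. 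Zariski gluing then identifies $\Hom(E^n, Y)$ with compatible families on $\{U_i\}_{i\in\{a,b\}^n}$, and since every $j \in J$ lies below some $i \in \{a,b\}^n$, compatible families on the cover extend uniquely to cocones under the full $J$-diagram; this gives the required natural bijection $\Hom(E^n, Y) \cong \mathrm{Hom}_{\mathrm{cocone}}(\{U_j\}_J, Y)$. Your approach is more elementary --- no induction and no appeal to commuting colimits --- at the cost of unpacking the universal property by hand; the paper's induction sets up notation (the partition of $J$ by the last coordinate) that it reuses in the proof of Lemma \ref{smallcomplex}, so there is a structural reason for the authors' choice.
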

\begin{proof}
The proof depends on an induction on $n$. The case $n=1$ is clear: it reduces to the statement that if $X$ is a scheme and $U$ and $V$ are a open subsets covering $X$, then $X$ is isomorphic to the push-out of the diagram
$$
U \leftarrow U\cap V \rightarrow V
$$
in the category of schemes. 

Let us prove the inductive step. We can break down $J$ into three full subcategories, depending on the letter appearing as the last coordinate. Namely, for every $* \in \{a, b, c\}$ we define $J_*$ to be the full subcategory of $J$ on the vertices of the form  $$
U_{j_1} \times \ldots \times U_{j-1} \times U_{*}. 
$$
By the inductive hypothesis, we have that 
\begin{equation}
\label{indu}
\varinjlim_{j \in J_*} U_j \cong E^{n-1} \times U_*
\end{equation}
As colimits commute with colimits, we can compute $\varinjlim_{j \in J}$ in terms of the colimits of the subdiagrams indexed by $J_*$: more precisely, there is a push-out
$$
\xymatrix{ \varinjlim_{j \in J_c} U_j \ar[r] \ar[d] &\varinjlim_{j \in J_a} U_j \ar[d] \\
\varinjlim_{j \in J_b} U_j \ar[r] & \varinjlim_{j \in J} U_j}
$$
By (\ref{indu}) we can rewrite this as 
$$
\xymatrix{ E^{n-1} \times U_c \ar[r] \ar[d] & E^{n-1} \times U_a  \ar[d] \\
E^{n-1} \times U_b \ar[r] & \varinjlim_{j \in J} U_j}
$$
Now, we are back to the case of a scheme $X=E^n$ covered by two open subsets, $U=E^{n-1}\times U_a$ and $V=E^{n-1}\times U_b$, with intersection 
$E^{n-1}\times U_c$. We know that the push-out has to be isomorphic to $E^n$ and  therefore we conclude that 
$$
\varinjlim_{j \in J} U_j \simeq E^n
$$
as we wished to prove.
\end{proof}
If $X$ is a scheme, we denote by $H^*(\mathcal{O}_X)$ the derived global sections of $\mathcal{O}_X$, viewed as an object of the stable dg category $\mathbb{C}\text{-}\mathrm{mod}$.
\begin{lemma}
\label{h*lim}
There is an equivalence in $k\text{-}\mathrm{mod}$
$$
H^*(\mathcal{O}_{E^n}) \simeq \varprojlim_{j \in J} H^0(\mathcal{O}_{U_j})
$$
\end{lemma}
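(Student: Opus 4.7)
The plan is to deduce the statement directly from the descent property of $\Qcoh$ recorded in Proposition \ref{descent}, applied to the colimit presentation of $E^n$ furnished by Lemma \ref{colimit}. First, Lemma \ref{colimit} gives an isomorphism $E^n \cong \varinjlim_{j \in J} U_j$ in the category of schemes, with structure maps the open immersions $\iota_{j,j'} \colon U_j \hookrightarrow U_{j'}$ associated to the inequalities $j \leq j'$ in the poset $J$. Applying Proposition \ref{descent} produces an equivalence
$$\Qcoh(E^n) \simeq \varprojlim_{j \in J} \Qcoh(U_j)$$
in $\mathrm{Pr}^{\mathrm{L,tr}}$, with structure maps given by pullback $\iota_{j,j'}^*$.

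Next I would identify $\mathcal{O}_{E^n}$ with the compatible system $(\mathcal{O}_{U_j})_{j \in J}$ under this equivalence. This is immediate from the fact that pullbacks of the structure sheaf along any morphism of schemes are again structure sheaves, so the family $(\mathcal{O}_{U_j})$ assembles into the unit of the symmetric monoidal structure on $\varprojlim_{j} \Qcoh(U_j)$, matching $\mathcal{O}_{E^n}$ under the descent equivalence. A standard fact about limits in $\mathrm{Pr}^{\mathrm{L,tr}}$ — namely that mapping complexes in a limit category are computed as the limit of the mapping complexes at each vertex — then yields
$$R\mathrm{Hom}_{\Qcoh(E^n)}(\mathcal{O}_{E^n}, \mathcal{O}_{E^n}) \simeq \varprojlim_{j \in J} R\mathrm{Hom}_{\Qcoh(U_j)}(\mathcal{O}_{U_j}, \mathcal{O}_{U_j})$$
in $\bC\text{-}\mathrm{mod}$.

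Finally, I would observe that each $U_j$ is affine: the schemes $U_a = E \setminus \{p\}$, $U_b = E \setminus \{e\}$ and $U_c = E \setminus \{e,p\}$ are all affine (a smooth projective curve minus a nonempty finite set is affine), and finite products of affines are affine. Consequently $R\mathrm{Hom}_{\Qcoh(U_j)}(\mathcal{O}_{U_j}, \mathcal{O}_{U_j}) \simeq H^0(\mathcal{O}_{U_j})$, concentrated in degree zero, while the left-hand side is by definition $H^*(\mathcal{O}_{E^n})$. Combining these identifications gives the desired equivalence.

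The only delicate point is the book-keeping in the middle step, i.e.\ the assertion that the unit object of $\Qcoh(E^n)$ corresponds to the pointwise unit of the limit diagram and that mapping complexes commute with limits in $\mathrm{Pr}^{\mathrm{L,tr}}$. Both are formal consequences of the definition of limits in this $\infty$-category, and neither requires going beyond the black-box usage of Proposition \ref{descent} outlined in Section \ref{prelimcat}; in particular no explicit \v{C}ech complex needs to be manipulated at this stage.
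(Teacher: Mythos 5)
Your proof is correct and follows essentially the same route as the paper: both arguments combine Lemma \ref{colimit}, the descent equivalence of Proposition \ref{descent}, and the affineness of each $U_j$. The only cosmetic difference is how the formal step is packaged: the paper identifies $\mathcal{O}_{E^n} \simeq \varprojlim_j \iota_{j*}\mathcal{O}$ inside $\Qcoh(E^n)$ and then applies the limit-preserving right adjoint $p_*$, whereas you invoke the fact that mapping complexes in a limit of categories in $\mathrm{Pr}^{\mathrm{L,tr}}$ are computed as the limit of mapping complexes — these are two phrasings of the same observation, since $p_*\mathcal{F} \simeq R\mathrm{Hom}_{\Qcoh(E^n)}(\mathcal{O}_{E^n}, \mathcal{F})$.
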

\begin{proof}
Let $\iota_j:U_j \to E^n$ be the inclusion, and let 
$p:E^n \to \mathrm{pt}$ be the structure map. By Proposition \ref{descent} and Lemma \ref{colimit} there is an equivalence, in the category of symmetric monoidal presentable categories, 
$$
 \Qcoh(E^n) \simeq \varprojlim_{j \in J} \Qcoh(U_j).
$$ 
This implies that there an is equivalence in  
$\Qcoh(E^n)$, 
 $$\mathcal{O}_{E^n} \simeq \varprojlim_{j \in J} \iota_{j*}\mathcal{O}.$$ 
 The statement follows by applying $p_*$ to both sides of the equivalence, 
 $$
H^*(\mathcal{O}_{E^n}) = 
p_* \mathcal{O}_{E^n} \simeq \varprojlim_{j \in J} p_* (\iota_{j*}\mathcal{O}) \simeq \varprojlim_{j \in J} H^*(\mathcal{O}_{U_j}) \simeq 
\varprojlim_{j \in J} H^0(\mathcal{O}_{U_j}).
 $$
 where the last step depends on the fact that $U_j$ is affine and therefore has no higher coherent cohomology.
  \end{proof}
  
  For every $0 \leq k \leq n$, let $J_k \subset J=\{a, b, c\}^n$ be the subset of vectors such that 
  $c$ appears as a coordinate exactly $k$ times.     \begin{lemma}
  \label{smallcomplex}
The object $H^*(\mathcal{O}_{E^n})$ in 
  $k\text{-}\mathrm{mod}$ is equivalent to a complex $P^\bullet$ having the following properties 
  \begin{enumerate}
  \item For every $0 \leq k \leq n$, $P^k \cong  \oplus_{j \in J_k} H^0(\mathcal{O}_{U_j})$
    \item If $k < 0$ or $k>n$, then $P^k=0$
    \item The differential $d^k:P^{k-1} \to P^k$ is given by a sum of restrictions, possibly with signs: more precisely, if $l$ is in $J_{k-1}$ then the restriction of the differential    $$
    d^{k}|_{H^0(\mathcal{O}_{U_l})}: H^0(\mathcal{O}_{U_l}) \to \oplus_{j \in J_k} H^0(\mathcal{O}_{U_j})
    $$
    has as factors the pull-back maps $H^0(\mathcal{O}_{U_l}) \to H^0(\mathcal{O}_{U_j})$, where $U_j \subset U_l$ with appropriate twists by $-1$.
  \end{enumerate}
  \end{lemma}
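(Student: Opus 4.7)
The plan is to exhibit $P^\bullet$ as an $n$-fold tensor product of a simple two-term complex, and then invoke Künneth over $\mathbb{C}$.

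First I will handle the case $n=1$. Consider the two-term complex
$$C^\bullet := \big[\, H^0(\mathcal{O}_{U_a}) \oplus H^0(\mathcal{O}_{U_b}) \xrightarrow{(f,g)\, \mapsto\, f|_{U_c} - g|_{U_c}} H^0(\mathcal{O}_{U_c}) \,\big]$$
concentrated in degrees $0$ and $1$. This is exactly the Čech complex for the two-element affine cover $\{U_a, U_b\}$ of $E$, and since $U_a$, $U_b$, and $U_c$ are affine (so have no higher coherent cohomology), $C^\bullet$ is quasi-isomorphic to $H^*(\mathcal{O}_E)$ in $\mathbb{C}\text{-}\mathrm{mod}$. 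The indexing already matches the description of $P^\bullet$ for $n=1$: degree $0$ is indexed by $J_0^{(1)}=\{a,b\}$ and degree $1$ by $J_1^{(1)}=\{c\}$, with differential a signed sum of pullbacks.

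For general $n$, I set $P^\bullet := C^{\otimes n}$, the tensor product over $\mathbb{C}$. Since tensor product over a field preserves quasi-isomorphisms, and by the Künneth theorem for coherent cohomology of products over $\mathbb{C}$, one has $P^\bullet \simeq (H^*(\mathcal{O}_E))^{\otimes n} \simeq H^*(\mathcal{O}_{E^n})$, yielding the desired equivalence in $\mathbb{C}\text{-}\mathrm{mod}$.

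It remains to verify that $P^\bullet$ has the stated form. Distributing the tensor product, the degree-$k$ component decomposes over sequences $(k_1, \ldots, k_n) \in \{0,1\}^n$ with $\sum k_i = k$. A sequence specifies a subset $S \subseteq \{1,\ldots,n\}$ of size $k$ of positions taking $C^1$ (indexed by $c$) and complementary positions taking either summand of $C^0$ (indexed by $a$ or $b$). The resulting index set is precisely $J_k \subset \{a,b,c\}^n$, and the summand corresponding to $j \in J_k$ is $\bigotimes_i H^0(\mathcal{O}_{U_{j_i}}) \cong H^0(\mathcal{O}_{U_j})$ by Künneth for $H^0$ of an affine product. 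This yields properties (1) and (2), since $J_k$ is empty for $k < 0$ or $k > n$. For property (3), the tensor-product differential is, by the Koszul sign rule,
$$d \;=\; \sum_{i=1}^n (-1)^{k_1 + \cdots + k_{i-1}}\, \mathrm{id}^{\otimes(i-1)} \otimes d_C \otimes \mathrm{id}^{\otimes(n-i)}.$$
On the summand indexed by $l \in J_{k-1}$, each term with $l_i \neq c$ replaces the $i$-th coordinate by $c$ via the restriction map, combining the sign inside $d_C$ (i.e.\ $+1$ if $l_i=a$, $-1$ if $l_i=b$) with the Koszul sign. Thus $d^k|_{H^0(\mathcal{O}_{U_l})}$ is a sum of signed pullback maps $H^0(\mathcal{O}_{U_l}) \to H^0(\mathcal{O}_{U_j})$ along the inclusions $U_j \subset U_l$ obtained by specializing exactly one coordinate from $a$ or $b$ to $c$, which is precisely the claim.

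The only subtle point is the compatibility of this tensor-product model with the homotopy limit description of Lemma \ref{h*lim}. However, since we are working over a field with affine building blocks, everything reduces to the classical Künneth theorem, and no genuine obstacle arises.
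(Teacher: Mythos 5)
Your proof is correct, and it takes a genuinely different (and arguably cleaner) route than the paper's. The paper first establishes the $\infty$-categorical limit description of Lemma \ref{h*lim} via the descent Proposition \ref{descent}, then runs an induction on $n$, decomposing the poset $J$ by the last coordinate and computing the homotopy limit as a fiber product/cocone of complexes. You instead bypass Lemma \ref{h*lim} entirely: you identify $C^\bullet$ as the \v{C}ech complex of $\{U_a,U_b\}$ (so $C^\bullet \simeq H^*(\mathcal{O}_E)$), set $P^\bullet = C^{\otimes n}$, and invoke flatness over $\mathbb{C}$ together with the coherent K\"unneth theorem to get $P^\bullet \simeq H^*(\mathcal{O}_{E^n})$; properties (1)--(3) then fall out of unwinding the tensor-product differential with its Koszul signs. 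In fact the paper's inductive step is secretly computing the same tensor product one factor at a time -- the cocone $[\,A\oplus B \to C\,]$ at each stage is precisely tensoring the inductively-built complex with the two-term complex $C^\bullet$, and the paper's own observation that tensoring with $H^0(\mathcal{O}_{U_c})$ shifts the $J_k$-grading up by one is exactly the shift in your K\"unneth decomposition. So your proof packages the paper's inductive computation as a single tensor product and swaps the $\infty$-categorical descent input for the classical K\"unneth formula; what you lose is the parallelism with Lemma \ref{colimit}, but what you gain is a shorter, more self-contained argument. One small remark: the final paragraph of your proposal, about "compatibility with the homotopy limit description of Lemma \ref{h*lim}," is unnecessary -- your argument is complete without it, since the lemma only asserts that \emph{some} complex with the stated properties computes $H^*(\mathcal{O}_{E^n})$, and you have exhibited one.
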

  \begin{proof}
The statement follows from Lemma \ref{h*lim}. We  adapt the inductive argument from Lemma \ref{colimit}. As the steps are the same, we repeat them here in a somewhat abbreviated form.

The base case $n=1$ is immediate. For the inductive step, we use the fact that by Lemma \ref{h*lim}
$$
H^*(\mathcal{O}_{E^n}) \simeq \varprojlim_{j \in J} H^0(\mathcal{O}_{U_j}).
$$
We compute $\varprojlim_{j \in J} H^0(\mathcal{O}_{U_j})$ as the fiber product
$$
\xymatrix{ \varprojlim_{j \in J} H^0(\mathcal{O}_{U_j})\ar[r] \ar[d] &\varprojlim_{j \in J_a} H^0(\mathcal{O}_{U_j}) \ar[d] \\
\varprojlim_{j \in J_b} H^0(\mathcal{O}_{U_j})  \ar[r] & \varprojlim_{j \in J_c} H^0(\mathcal{O}_{U_j})}
$$
Equivalently, we can express $\varprojlim_{j \in J} H^0(\mathcal{O}_{U_j})$ as a cocone 
\begin{equation}
\label{cocone}
\varprojlim_{j \in J} H^0(\mathcal{O}_{U_j}) \longrightarrow \big [
\varprojlim_{j \in J_a} H^0(\mathcal{O}_{U_j}) \oplus
\varprojlim_{j \in J_b} H^0(\mathcal{O}_{U_j})  \longrightarrow \varprojlim_{j \in J_c} H^0(\mathcal{O}_{U_j}) \big ].
\end{equation}
Now, for every $* \in \{a, b, c\}$ there is an equivalence 
$$\varprojlim_{j \in J_*} H^0(\mathcal{O}_{U_j}) \simeq H^*(\mathcal{O}_{E^{n-1}}) \otimes H^0(\mathcal{O}_{U_*})$$
This enables us to rewrite (\ref{cocone}) as 
$$
\varprojlim_{j \in J} H^0(\mathcal{O}_{U_j}) \to \Big [ \big (H^*(\mathcal{O}_{E^{n-1}}) \otimes H^0(\mathcal{O}_{U_a}) \big ) \oplus
\big ( H^*(\mathcal{O}_{E^{n-1}}) \otimes H^0(\mathcal{O}_{U_b}) \big) \to H^*(\mathcal{O}_{E^{n-1}}) \otimes H^0(\mathcal{O}_{U_c})\Big ]
$$
By the inductive hypothesis $H^*(\mathcal{O}_{E^{n-1}})$ satisfies the statement, and  the same applies to 
$$
\big (H^*(\mathcal{O}_{E^{n-1}}) \otimes H^0(\mathcal{O}_{U_a}) \big ) \oplus
\big ( H^*(\mathcal{O}_{E^{n-1}}) \otimes H^0(\mathcal{O}_{U_b}) \big). 
$$ 
The situation is  different for  $H^*(\mathcal{O}_{E^{n-1}}) \otimes H^0(\mathcal{O}_{U_c})$. Indeed, by the inductive hypothesis $H^*(\mathcal{O}_{E^{n-1}}) \otimes H^0(\mathcal{O}_{U_c})$ is equivalent to a complex whose $k$-th term is a direct sum of factors of the form $H^0(U_j)$ where $j$ is in 
$J_{k+1}$, rather than $J_k$: this is due to the fact that we are tensoring $H^*(\mathcal{O}_{E^{n-1}})$ with $H^0(\mathcal{O}_{U_c})$, and this raises by one the number of entries equal to $c$. We conclude via the usual formula for the cocone of a morphism of complexes. 
\end{proof}
Lemma \ref{smallcomplex} gives us a small model for the coherent cohomology of $E^n$. In the next section we will use this model to compute the top  coherent cohomology of $\cM_X$. Before proceeding, in the next subsection, we   sketch without giving all the details an alternative  and more elementary way to prove Lemma \ref{smallcomplex}.  We chose to present our previous argument  instead as it makes more transparent the  geometry underpinning Lemma \ref{smallcomplex}. 
\subsubsection{A different approach}
Another way to prove Lemma \ref{smallcomplex} is to apply iteratively the following elementary and well-known observation to reduce the size of the \v{C}ech complex.
 \begin{lemma}
 \label{small}
Let $\mathfrak{C} = (C^i, \phi^i)$ be a complex 
$$
\mathfrak{C}= \ldots C^{i-1} \stackrel{\phi^{i-1}} \to C^i \stackrel{\phi^i} \to C^{i+1} \stackrel{\phi^{i+1}} \to \ldots
$$
Consider splittings 
$
C^i \cong B^i \oplus K^i \quad C^{i+1}$ and  $B^{i+1} \oplus K^{i+1}
$, and denote the projections 
$$
B^i \stackrel{p_1}\leftarrow C^i \stackrel{p_2}\rightarrow K^i \quad B^{i+1} \stackrel{p_1}\leftarrow C^{i+1} \stackrel{p_2}\rightarrow K^{i+1}.
$$
Set $\phi^i_1=p_1 \circ \phi^i$ and $\phi^i_2=p_2 \circ \phi^i$. Assume that $\sigma:=\phi^i_2|_{K^i}: K^i \to K^{i+1}$ is an isomorphism. 
Then $\mathfrak{C}$ is quasi-isomorphic to the complex 
$\mathfrak{D} = (D^j, \psi^j)$ 
where 
\begin{itemize}
\item For all $j \notin \{i, i+1\}$, $D^j=C^j$. For all $j \notin \{i-1,i,i+1\}$ 
$\psi^j = \phi^j$. 
\item $D^i=B^i$ and $D^{i+1}=B^{i+1}$
\item $\psi^{i-1} = pr_1 \circ \phi^{i-1}$, $\psi^i = (\phi^i_1)|_{B^i} - (\phi^i_1 \circ \sigma^{-1} \circ \phi^i_2)|_{B^i}$ and $\psi^{i+1}=(\phi^{i+1})|_{B^{i+1}}$
\end{itemize}
\end{lemma}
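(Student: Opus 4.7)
The plan is to prove this by an explicit Gaussian-elimination argument on the complex, exploiting the fact that the invertibility of $\sigma$ lets us block-diagonalize $\phi^i$ with respect to the given splittings of $C^i$ and $C^{i+1}$; the $\sigma$ block is then an acyclic two-term subcomplex, and the complementary block recovers $\mathfrak{D}$. Concretely, I will write down chain maps $\iota\colon\mathfrak{D}\to\mathfrak{C}$ and $\pi\colon\mathfrak{C}\to\mathfrak{D}$ that are mutually inverse up to an explicit homotopy, which gives the claimed quasi-isomorphism.

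For $\iota$, I would take the identity outside degrees $i,i+1$; in degree $i$ set $\iota(b)=(b,-\sigma^{-1}\phi^i_{2}(b))\in B^i\oplus K^i$, and in degree $i+1$ set $\iota(b')=(b',0)$. Compatibility with $\psi^i$ and $\psi^{i+1}$ is a direct computation using the formula defining $\psi^i$. The only step requiring care is compatibility in degrees $i-1\to i$, which reduces to the identity $p_2\circ \phi^{i-1} = -\sigma^{-1}\circ \phi^i_{2}\circ p_1\circ\phi^{i-1}$. This in turn follows by projecting the cocycle relation $\phi^i\phi^{i-1}=0$ onto $K^{i+1}$ and using the block form of $\phi^i$ together with the invertibility of $\sigma$.

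Dually, define $\pi$ to be the identity outside degrees $i,i+1$, to be $(b,k)\mapsto b$ in degree $i$, and $(b',k')\mapsto b'-\phi^i_{1}\sigma^{-1}(k')$ in degree $i+1$. Compatibility in degrees $i-1\to i$ and $i\to i+1$ is immediate by expansion; compatibility in degrees $i+1\to i+2$ reduces, after substitution, to applying the cocycle condition $\phi^{i+1}\phi^i=0$ to elements of the form $(0,\sigma^{-1}k')$. A short check gives $\pi\circ\iota=\mathrm{id}_{\mathfrak{D}}$ on the nose. To see that $\iota\circ\pi\simeq\mathrm{id}_{\mathfrak{C}}$, I would exhibit the homotopy $h\colon C^{i+1}\to C^i$ defined by $h(b',k')=(0,-\sigma^{-1}k')$ and zero in all other degrees, then verify $\mathrm{id}-\iota\pi = dh+hd$ by a direct calculation in degrees $i$ and $i+1$, again using the block form of $\phi^i$. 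The main obstacle is just the bookkeeping of the block decomposition and signs; no substantive new idea is needed beyond repeatedly invoking $d^2=0$ to convert between the two off-diagonal entries of $\phi^i$.
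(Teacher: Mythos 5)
Your proposal is correct and is in essence the same Gaussian-elimination argument as the paper's, just packaged differently: the paper factors the quasi-isomorphism through an intermediate "diagonalized" complex (with explicit isomorphisms $\rho$ and $\lambda$ whose contractible $K^i\to K^{i+1}$ direct summand is then dropped), whereas you exhibit the retraction $\iota,\pi$ and a chain homotopy directly. Your version is somewhat cleaner to verify, since the paper's definitions of $\rho$ and $\lambda$ implicitly identify $K^i$ with $K^{i+1}$ via $\sigma$ without saying so, and you avoid that notational abuse by phrasing everything in terms of the block matrix $\begin{pmatrix}A&B\\C&\sigma\end{pmatrix}$ of $\phi^i$.

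One small slip: with the convention $\mathrm{id}-\iota\pi=dh+hd$, the homotopy should be $h(b',k')=(0,\,\sigma^{-1}k')$, not $(0,\,-\sigma^{-1}k')$. Indeed, in degree $i$ one computes $h^{i+1}\phi^i(b,k)=(0,\sigma^{-1}(Cb+\sigma k))=(0,\sigma^{-1}Cb+k)$, which matches $(\mathrm{id}-\iota\pi)(b,k)=(0,k+\sigma^{-1}Cb)$, and similarly in degree $i+1$ one gets $\phi^i h^{i+1}(b',k')=(B\sigma^{-1}k',k')=(\mathrm{id}-\iota\pi)(b',k')$. Your sign works only under the opposite convention $\iota\pi-\mathrm{id}=dh+hd$. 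This is cosmetic and easy to fix; the remaining verifications (chain-map property of $\iota$ and $\pi$ via the two projections of $d^2=0$, and $\pi\iota=\mathrm{id}$) are correct as written.
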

\begin{proof}
The quasi-isomorphism is given by composing the following maps of complexes, the first one of which is an isomorphism
$$
\xymatrix{
\ldots \ar[r] & C^{i-1} \ar[r]^{\phi^{i-1}}   & C^i \ar[r]^{\phi^{i}}  & C^{i+1} \ar[r]^{\phi^{i+1}} & C^{i+2} \ar[r]    & \ldots \\
\ldots \ar[r] & C^{i-1} \ar[r]^{\xi^{i-1}} \ar[u]^= & B^i\oplus K^i \ar[r]^{\xi^{i}}  \ar[u]^\rho & B^{i+1} \oplus K^{i+1} \ar[r]^-{\xi^{i+1}}   \ar[u]^\lambda & C^{i+2} \ar[r] \ar[u]_= & \ldots \\
\ldots \ar[r] & C^{i-1} \ar[r]^{\psi^{i-1}}  \ar[u] & B^i  \ar[r]^{\psi^{i}}    \ar[u]  & B^{i+1} \ar[r]^{\psi^{i+1}}  \ar[u] & C^{i+2} \ar[r] \ar[u]   & \ldots
}
$$
Let us explain how the middle complex, and the maps $\rho$ and $\lambda$ are defined. We will use the identification $C^i=B^i\oplus K^i$ and $C^{i+1}=B^{i+1} \oplus K^{i+1}$ given by the splitting; thus, we also implicitly view $B^i$ and $K^i$ as subspaces of $C^i$ and similarly for $B^{i+1}$ and $K^{i+1}$. We set 
$$(b, k) \in  B^i\oplus K^i =  C^i  \mapsto \rho((b,k)) = (b, \phi^i_2(b) + k)
$$
$$(b, k) \in  B^{i+1}\oplus K^{i+1} =  C^{i+1}  \mapsto \lambda((b,k)) = (b + \phi^i_1\circ \sigma^{-1}(k), k)$$
The definition of $\lambda$ and $\rho$ has just the purpose of \emph{diagonalizing} $\phi^i$. The differentials $\xi^j$ of the middle complex are the unique maps making the diagram commute: namely, $\xi^j=\phi^j$ if 
$j \notin \{i-1, i, i+1\}$; for the other indices we have that 
$$\xi^{i-1}=\rho^{-1} \circ \phi^{i-1}, \quad \xi^i(b, k)= \big ( \phi^i_1(b) - \phi^i_1 \circ \sigma^{-1} \circ \phi^i_2(b), k \big ), \quad \xi^{i+1}=\phi^{i+1} \circ \lambda
$$
\end{proof}

Rather than giving a proof of the fact that Lemma \ref{small} implies Lemma \ref{smallcomplex}, we show in some detail why this works in the case of $E^2$. We consider the open cover 
\begin{equation}
\label{E2cover}
\mathcal{U}=\{U_{aa}, U_{ab}, U_{ba}, U_{bb}\}
\end{equation}
which we introduced in Section \ref{sec:tccoe}.  To compute the \v{C}ech complex, we order the members of the cover as they appear in (\ref{E2cover}).

We obtain the complex below, where one has take the direct sum of all terms on the same row (but for clarity we have dropped the $\oplus$-signs); and the differential is given by the sum of restrictions (which are indicated by arrows in the diagram) twisted by appropriate signs. Note also that instead of writing intersections out explicitly,  we only indicate the resulting open subset: for instance, we write 
$H^0(\mathcal{O}_{U_{ac}})$ instead of $H^0(\mathcal{O}_{U_{aa}\cap U_{ab}})$. 
$$
\xymatrix{&  H^0(\mathcal{O}_{U_{aa}}) \ar[ld] \ar[d] \ar[dr] & H^0(\mathcal{O}_{U_{ab}}) \ar[dll] \ar[dr] \ar[drr] & H^0(\mathcal{O}_{U_{ba}})  
\ar[dll] \ar[d] \ar[drr] & H^0(\mathcal{O}_{U_{bb}})  \ar[dll] \ar[d] \ar[dr]& &
\\
H^0(\mathcal{O}_{U_{ac}}) \ar[dr]  \ar[drr] & H^0(\mathcal{O}_{U_{ca}})  \ar[d] \ar[drr] & \textcolor{red}{H^0(\mathcal{O}_{U_{cc}})}    \ar@[red][d]  \ar[dr] & \textcolor{blue}{H^0(\mathcal{O}_{U_{cc}})} \ar@[blue][dll] \ar[dr] & 
H^0(\mathcal{O}_{U_{cb}})  \ar[dll] \ar[d] &  H^0(\mathcal{O}_{U_{bc}}) \ar[dl] \ar[dll] \\
& \textcolor{blue}{H^0(\mathcal{O}_{U_{cc}})}
 \ar[drr] & \textcolor{red}{H^0(\mathcal{O}_{U_{cc}})} \ar[dr] & \textcolor{green}{H^0(\mathcal{O}_{U_{cc}})} \ar@[green][d] & H^0(\mathcal{O}_{U_{cc}}) \ar[dl] & \\
&&&  \textcolor{green}{H^0(\mathcal{O}_{U_{cc}})} &&&
}
$$
The pairs whose colours match cancel out by applying Lemma \ref{small} to the arrow joining them. 
We  apply Lemma \ref{small} a maximal number of times: in this way we remove all pairs such that the restriction is an isomorphism. Note that each iteration of Lemma \ref{small} also affects the differential. Ultimately we obtain a complex that satisfies all the properties of Lemma \ref{smallcomplex}. We leave the details to the reader. 

\subsection{The coherent cohomology of $\cM_X$} Let us return to the setting of Section \ref{sccc}. Let  $X$ be a good $n$-dimensional toric variety with fan $\Sigma_X$. We choose an ordering of  the $1$-dimensional cones of $\Sigma_X$. If $\sigma$ is a top dimensional cone, we consider  
the set of one dimensional cones in its closure 
$$\{v_{i_1}, \ldots, v_{i_n}\}, \quad i_1 < \ldots < i_n$$  There exists a unique lattice automorphism $\phi_\sigma$ of $N_T$ which maps 
$v_{i_j}$ to the $j$-th element of the standard basis
$$
v_{i_1} \mapsto (1, 0, \ldots) \quad 
v_{i_2} \mapsto (0, 1, \ldots) \quad \ldots
$$
Let $\tau_1 \ldots \tau_n$ be the codimension-one cones in the closure of $\sigma$, which we have numbered in such a way that $v_{i_j} \notin \tau_j$. Then $\phi_\sigma$ maps $\sigma$ to the first  quadrant, and maps the sublattices generated by $\tau_i$ to the coordinate hyperplanes
$$
\langle \tau_1 \rangle \mapsto \langle (0,1,0 \ldots), (0,0,1 \ldots), \ldots \rangle  \quad 
\langle \tau_2 \rangle \mapsto \langle (1,0,0 \ldots), (0,1,0 \ldots) \ldots \rangle \quad \ldots
$$
Note that $\phi_\sigma$ induces an isomorphism $E \otimes \langle \sigma \rangle \cong E^n$, which maps the divisors $E \otimes \langle \tau_i \rangle$ to the standard divisors 
$$
\{e\} \times E^{n-1}, \quad E \times \{e\} \times E^{n-2}, \quad \ldots
$$

Let $\pi: \widetilde{\cM_X} \to \cM_X$ be the normalization. The scheme $\widetilde{\cM_X}$  is a disjoint union of irreducible components which are in natural bijection with the top dimensional  cones of $\Sigma_X$
$$
\widetilde{\cM_X} \cong \coprod_{\sigma \in \Sigma_X^{top}} \cM_\sigma
$$ 
The component $\cM_\sigma$ corresponding to the  top dimensional cone $\sigma$ is canonically isomorphic to $E \otimes \langle \sigma \rangle$. We can identify $\cM_\sigma$ with $E^n$,  via the isomorphisms $\phi_\sigma$ constructed above: with small abuse of notation we denote $\phi_\sigma$ also the resulting   isomorphism  $\cM_\sigma \cong E^n$. Using $\phi_\sigma$ we will be able to define open subsets of $\cM_\sigma$ in terms of  the  open cover $\mathfrak{U}$ of $E^n$ which we considered in Section \ref{sec:tccoe}.
 
\begin{lemma}
\label{tiaua}
There is a unique affine open cover $\mathfrak{U}_X$ of $\cM_X$ having the following property: 
\begin{itemize}
\item For every top dimensional cone  $\sigma \in \Sigma^n_X$ the open cover of $\cM_\sigma$ given by 
$$
\big \{ \pi^{-1}(U) \cap \cM_\sigma \big \}_{U \in \mathfrak{U}_X}
$$ coincides with the standard open cover $\mathfrak{U}$ of $E^n$ under the identification $\cM_\sigma \cong E^n$.
\end{itemize}
\end{lemma}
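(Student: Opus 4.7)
The plan is to define $\mathfrak{U}_X$ as the collection of all open subsets $U \subset \cM_X$ such that $\pi^{-1}(U) \cap \cM_\sigma \in \mathfrak{U}$ for every top-dimensional cone $\sigma \in \Sigma_X^{\mathrm{top}}$. Uniqueness of such a cover is then immediate from this characterization, so the substance lies in showing that this family actually surjects onto $\mathfrak{U}$ on every component and forms an affine open cover of $\cM_X$.

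A preliminary step is to align the auxiliary choices across components. The cover $\mathfrak{U}$ on $E^n$ depends on a chosen point $p \in E - \{e\}$. Via the canonical isomorphism $\cM_{\langle v \rangle} \cong E$ induced by the generator $v$ of a ray, the point $p$ produces a canonical point $p_v \in \cM_{\langle v \rangle} - \{e\}$ for each ray $v \in \Sigma_X^1$, and, crucially, $p_v$ does not depend on the top-dimensional cone $\sigma \supset \langle v \rangle$ used to embed $\cM_{\langle v \rangle}$ into $\cM_\sigma$, since the lattice map is canonical. This coherence ensures that the geometric objects underlying the cover --- the divisors $\cM_\tau$ associated to ``$b$''-labels and their translates associated to ``$a$''-labels --- match on the shared $1$-dimensional strata of different components.

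The core of the proof is then a gluing check on each shared codim-$1$ divisor $\cM_\tau$ between two top-dimensional cones $\sigma, \sigma' \in \Sigma_X^{\mathrm{top}}$ containing $\tau$. By Corollary \ref{pushouttoric}, a prescribed family $(U_\sigma \in \mathfrak{U})_\sigma$ assembles into a global open subset of $\cM_X$ if and only if $U_\sigma \cap \cM_\tau = U_{\sigma'} \cap \cM_\tau$ for every such shared $\tau$. Writing $U_\sigma = U_{i_1} \times \cdots \times U_{i_n}$ via $\phi_\sigma$, the restriction to the coordinate hyperplane $\{x_j = 0\}$ corresponding to $\cM_\tau$ is empty when $i_j = b$ and equals the element of $\mathfrak{U}(E^{n-1})$ indexed by $(i_l)_{l \neq j}$ when $i_j = a$. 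The possibly different orderings of the $n-1$ rays of $\tau$ induced by $\phi_\sigma$ versus $\phi_{\sigma'}$ are absorbed by the $S_{n-1}$-invariance of $\mathfrak{U}(E^{n-1})$, so the compatibility reduces to intrinsic conditions on $\tau$: the labels at the two ``missing-ray'' positions in $\sigma$ and $\sigma'$ must agree (so that both restrictions are simultaneously empty or both non-empty), and when non-empty, the labels induced on the $n-1$ rays of $\tau$ from the two sides must coincide. Given any target element $V \in \mathfrak{U}$ on a single $\cM_\sigma$, one extends $V$ to a globally compatible family by propagating label assignments through the fan one shared face at a time; the freedom to set a label to ``$b$'' at a newly encountered shared face (which makes both restrictions empty and short-circuits further constraints) ensures this propagation never becomes over-determined.

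The main anticipated obstacle is the combinatorial bookkeeping around the label propagation across components and the different orderings of rays in the various $\phi_\sigma$'s. Once the $S_{n-1}$-symmetry of $\mathfrak{U}(E^{n-1})$ is exploited to render the gluing compatibility intrinsic to $\tau$, the remaining argument is a routine finite induction over the fan, and affineness of each element of $\mathfrak{U}_X$ on each component $\cM_\sigma$ (inherited from the affineness of members of $\mathfrak{U}$ on $E^n$) yields the claimed affine open cover.
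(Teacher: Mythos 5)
Your opening move---defining $\mathfrak{U}_X$ as \emph{all} opens $U$ with $\pi^{-1}(U)\cap\cM_\sigma\in\mathfrak{U}$ for every $\sigma$---forces every element of $\mathfrak{U}_X$ to have a non-empty intersection with \emph{every} irreducible component of $\cM_X$, since $\varnothing\notin\mathfrak{U}$. This does not match the cover the lemma has in mind. Already for $X=\mathbb{P}^1$ (Example \ref{procedure}), the paper's $\mathfrak{U}_X$ contains $E_1-\{e\}$, which has empty restriction to $E_2$, and the $(U_b)^n$-type opens on a single component are likewise essential ingredients for $\mathbb{P}^2$ and beyond. More seriously, for $X=\mathbb{P}^2$ your maximal collection cannot hit an element such as $U_a\times U_b$ on a fixed $\cM_\sigma$ without either leaving some third component untouched (forbidden by your definition) or padding that component with a disjoint $(U_b)^2$ piece; the latter produces an open whose restriction to some $\cM_{\sigma'}$ duplicates a restriction already supplied by $\cM_X-\mathrm{Sing}(\cM_X)$. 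So either your collection fails to surject onto $\mathfrak{U}$ on some component, or its restriction family is not in bijection with $\mathfrak{U}$. Either way it is not the cover the lemma is describing: the intended cover, as the explicit construction in the paper and the bookkeeping in Lemmas \ref{tiauc2} and \ref{smallcomplex2} make clear, is the one for which the \emph{non-empty} restrictions realize each element of $\mathfrak{U}$ exactly once on each component, allowing (indeed requiring) opens supported on a single component.

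This also undermines the uniqueness claim. ``Immediate from the characterization'' would only follow if the property in the lemma singled out your maximal collection, but it does not: a cover $\mathfrak{V}$ satisfying the property is merely forced to sit inside the set of opens with restrictions in $\mathfrak{U}\cup\{\varnothing\}$, and one must still argue that each pair $(V,\sigma)$ with $V\in\mathfrak{U}$ is achieved by a \emph{unique} $U\in\mathfrak{U}_X$. The paper's proof instead builds $\mathfrak{U}_X$ constructively: given $V\in\mathfrak{U}$ on $\cM_\sigma$, it determines the codimension-one faces $\tau_{i_1},\dots,\tau_{i_k}$ that $V$ meets, takes $\rho=\tau_{i_1}\cap\dots\cap\tau_{i_k}$, transports $V$ via the canonical identifications $\psi_{\sigma,\sigma'}$ exactly to the top cones $\sigma'\supset\rho$, and lets the image under $\pi$ be the corresponding element of $\mathfrak{U}_X$. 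Uniqueness is then read off from the fact that this assignment is forced: the set of components an element must touch, and its restriction to each, are determined by the labels of $V$. Your gluing check against Corollary \ref{pushouttoric} and the observation about $S_{n-1}$-symmetry are sensible and in the right spirit, but you would need to replace the maximality definition with this forced propagation (and allow empty restrictions on the components not containing $\rho$) to recover the statement.
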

\begin{proof}
On $\widetilde{\cM_X}$ there is an obvious open cover, which is given by considering the open cover $\mathfrak{U}$ on each connected component $\cM_\sigma \cong E^n$. This however does not descend to an open cover on $\cM_X$. Note, indeed, that the normalization map $\pi: \widetilde{\cM_X} \to \cM_X$ is not open. The issue is that the image under $\pi$ of an open subset $U  \subset \cM_\sigma$  might intersect the singular locus of 
$\cM_X$ (and this happens also for the open subsets in the cover $\mathfrak{U}$). 
The singular locus is where different irreducible components come together: so an open subset containing a point $p$ of  the singular locus must necessarily restrict to a non-empty open subset on all irreducible components of $\cM_X$  meeting at $p$.

The open cover $\mathfrak{U}_X$ is the best approximation to the open cover on $\widetilde{\cM_X}$ induced by $\mathfrak{U}$. We explain how to generate the open subsets belonging to $\mathfrak{U}_X$ via an iterative process. The recipe we will describe starts from an open subset $U \in \mathfrak{U}$ on a component $\cM_\sigma$, and generates an open subset of $\cM_X$. The open cover $\mathfrak{U}_X$ is made up of all the open subsets of $\cM_X$ that can be obtained in this way. We refer the reader  to Example \ref{procedure}, after the proof, for some concrete examples.

Let $U \in \mathfrak{U}$ be an open subset of a component $\cM_\sigma$. Note that $U$ is a product of open subsets of $E$ of the form $U_a$ and $U_b$, where we are using the identification $\cM_\sigma \cong E^n$ provided by $\phi_\sigma$. A codimension-one cone $\tau$ of $\sigma$ corresponds to a divisor $\cM_\tau$ in $\cM_\sigma$. We associate to $U$ the collection of codimension-one cones in $\sigma$ 
$$
\tau_{i_1},  \ldots, \tau_{i_k}
$$
having the property that $U \cap \cM_{\tau_i} \neq \varnothing$. It is easy to see that $k$ is the number of factors of $U$ of the form $U_a$. Consider the codimension $k$ cone $\rho$ given by the intersection $\tau_{i_1} \cap \ldots \cap \tau_{i_k}$. Let $\sigma_{j_1}, \ldots, \sigma_{j_n}$ be the collection of top dimensional cones of $\Sigma$ containing $\rho$. Note that there is a   fixed isomorphism between $\cM_\sigma$ and  the components $\cM_{\sigma_{j_l}}$ given by 
$$
\psi_{\sigma, \sigma_{j_l}}: \cM_\sigma \stackrel{\phi_\sigma} \longrightarrow E^n  \stackrel{(\phi_{\sigma_{j_l}})^{-1}} \longrightarrow \cM_{\sigma_{j_l}}
$$
Now consider the open subset of $\widetilde{\cM_X}$ defined as 
$$
V:= U  \bigcup \big ( \bigcup_{i=1, \ldots, n} \psi_{\sigma, \sigma_{j_l}}(U) \big )
$$

The claim is that $\pi(V)$ is an open subset of $\cM_X$. According to our recipe, we need to add it to the affine open cover $\mathfrak{U}_X$. To construct $\mathfrak{U}_X$ we iterate this procedure on every component $\cM_\sigma$ of $\widetilde{\cM_X}$, and on every open subset $U \in \mathfrak{U}$ of $\cM_\sigma$. Of course  not all iterations will generate a new element of $\mathfrak{U}_X$. Let us conclude by making a remark on one aspect of this construction: why can we conclude that $\pi(V)$ is affine, from the fact that its irreducible components are affine? The point is that 
$\pi(V)$ is an iterated push-out, in the category of schemes, of affine schemes along closed immersions:  and therefore it is affine by Theorem 3.4 of \cite{schwede2005gluing}.  It is easy to see that $\mathfrak{U}_X$ satisfies the property stated in the Lemma, and that that property pins it down uniquely, and this concludes the proof.
\end{proof}

\begin{example}
\label{procedure}
Let us explain some special cases of the constructions of Lemma \ref{tiaua}. 
\begin{itemize}
\item Consider the open subset $U=(U_a)^n$ contained in a connected component $\cM_\sigma$. As all factors of 
$U$ are of the form $U_a$, we associate to it the collection of all codimension-one cones in $\sigma$. Their intersection is the zero cone, which is contained in all top dimensional cones of $\Sigma$. Thus the open subset in 
$\mathfrak{U}_X$ obtained from $U$ via  the recipe of Lemma \ref{tiaua}  has non-trivial intersection with all the irreducible components of $\cM_X$. At the other extreme, the open subset $U=(U_b)^n \subset \cM_\sigma$ does not contain $U_a$-factors, and so the corresponding open subset in $\mathfrak{U}_X$ is just $\pi(U)$, which lies entirely in one single irreducible component of $\cM_X$. Note that $\pi$ restricted to $U$ is an open map, as $U$ does not intersect the preimage of the singular locus of $\cM_X$.

\item Let $X=\mathbb{P}^1$. Then  $\cM_X$ is given by the push-out of the diagram
$$
E \longleftarrow \{e\} \longrightarrow E
$$
where $e$ is the identity element. For convenience let us denote by $E_1$ the first copy of $E$ appearing in the diagram, and by $E_2$ the second copy. Then $\mathfrak{U}_X$ contains three open subsets: 
\begin{enumerate}
\item $E_1-\{e\}$
\item $E_2 - \{e\}$
\item the push-out of the following diagram 
$$
E_1-\{p\} \longleftarrow \{e\} \longrightarrow E_2-\{p\}
$$
\end{enumerate}
\item Let us describe  briefly the case of $X=\mathbb{P}^2$. Then $\cM_X$ has three irreducible components. In this case $\mathfrak{U}_X$ contains seven open subsets. 
\begin{enumerate}
\item There is a unique open subset with three irreducible components. This corresponds to the case considered in the first point in this list of examples, where we start with an open subset of the form $(U_a)^3$ on one of the components of $\widetilde{\cM_X}$. 
\item There are three open subsets with two irreducible components. They are generated from  open subsets $U$ of the form $U_a \times U_b$, or $U_b \times U_a$, on a component $\cM_\sigma$. There is only one codimension-one cone $\tau$ associated with $U$, and therefore two maximal cones $\sigma$ and $\sigma'$ such that $\tau \subset \sigma$ and 
$\tau \subset \sigma'$. 
\item  There are three open subsets with one irreducible component. They correspond to the open subsets of the form $U_b \times U_b$ on the three components of  $\widetilde{\cM_X}$. 
\end{enumerate}
\end{itemize}
\end{example}

As in Section \ref{sec:tccoe}, we will not actually  use the \v{C}ech nerve of $\mathfrak{U}_X$ for our calculations. Rather, we will work with a smaller colimit  giving a presentation of 
$\cM_X$. Recall the ordered collection of open subsets 
$\{U_j\}_{j \in J}$ of $E^n$   which we studied in Section \ref{sec:tccoe}. 
\begin{lemma}
\label{tiauc2}
There is a unique collection $\{U_j\}_{j \in J_X}$ of affine open subsets of $\cM_X$ ordered by inclusions having the following property:
\begin{itemize}
\item For every top dimensional cone  $\sigma \in \Sigma^n_X$ the collection  
$$
\big \{ \pi^{-1}(U_j) \cap \cM_\sigma \big \}_{j \in J_X}
$$ coincides with the collection of open subsets 
$\{U_j\}_{j \in J}$ under the identification 
$$\phi_\sigma:\cM_\sigma \cong E^n$$
\end{itemize}
\end{lemma}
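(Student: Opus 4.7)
My plan is to generalize Lemma \ref{tiaua} directly, replacing the binary indexing set $\{a,b\}^n$ by $J = \{a,b,c\}^n$. The key observation I would exploit is that, with respect to the singular locus of $\cM_X$, positions $l$ with $j_l = c$ behave exactly as those with $j_l = b$: since $U_c = U_a \cap U_b$ does not contain the identity $e \in E$, the factor $U_c$ in position $l$ does not force the open subset to intersect the divisor $\cM_{\tau_l}$ inside $\cM_\sigma$. So only the positions with $j_l = a$ control which components of $\widetilde{\cM_X}$ must be glued, exactly as in the proof of Lemma \ref{tiaua}.

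Concretely, for each $j \in J$ and each top-dimensional $\sigma \in \Sigma_X$ I would let $U_j^\sigma \subset \cM_\sigma$ denote the open subset identified with $U_j \subset E^n$ via $\phi_\sigma$. Setting $\{l : j_l = a\} = \{l_1 < \ldots < l_k\}$, $\rho := \tau_{l_1} \cap \cdots \cap \tau_{l_k}$, and letting $\sigma^{(1)} = \sigma, \ldots, \sigma^{(N)}$ be the top-dimensional cones of $\Sigma_X$ containing $\rho$, I would define
\[
V_j := U_j^\sigma \,\cup\, \bigcup_{s = 2}^{N} \psi_{\sigma, \sigma^{(s)}}(U_j^\sigma) \ \subset\ \widetilde{\cM_X}, \qquad U_j := \pi(V_j) \subset \cM_X.
\]
The argument of Lemma \ref{tiaua} would then show that $U_j$ is an affine open subscheme of $\cM_X$, by expressing it as an iterated pushout of affine pieces along closed immersions and invoking Theorem 3.4 of \cite{schwede2005gluing}. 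Uniqueness is immediate because the stated condition determines $\pi^{-1}(U_j) \cap \cM_\sigma$ on every component, hence $U_j$ as a subset of $\cM_X$. The inclusion ordering $U_j \subseteq U_{j'}$ whenever $j \le j'$ in $J$ (with $c \le a,b$) would fall out of the construction: on each component $U_j^\sigma \subseteq U_{j'}^\sigma$, and fewer positions require gluing, so the set of cones appearing in the gluing of $U_j$ contains that of $U_{j'}$.

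\emph{Main obstacle.} The one step requiring genuine verification will be showing that $V_j$ is saturated under the equivalence relation defining $\pi$, so that $\pi(V_j)$ is genuinely open in $\cM_X$ rather than merely a subset. I expect this to reduce to a combinatorial check: if $x \in U_j^\sigma$ lies on a face $\cM_\tau$ of $\cM_\sigma$ with $\tau = \bigcap_{l \in T} \tau_l$, then the nonemptiness $U_j^\sigma \cap \cM_\tau \neq \emptyset$ forces $T \subseteq \{l : j_l = a\}$, hence $\tau \supseteq \rho$; every top-dimensional $\sigma'$ having $\tau$ as a face then contains $\rho$ and is among the cones appearing in $V_j$. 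Once saturation is in hand, the affineness and independence-of-$\sigma$ arguments from Lemma \ref{tiaua} carry over verbatim.
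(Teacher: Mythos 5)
Your plan follows the paper's own approach: adapt the iterative construction of Lemma~\ref{tiaua}, and you correctly identify saturation of $V_j$ under $\pi$ as the key point needing verification. However, your proposed combinatorial check is incomplete, and the formula $V_j := U_j^\sigma \cup \bigcup_s \psi_{\sigma, \sigma^{(s)}}(U_j^\sigma)$, which mirrors the formula in the paper's own proof of Lemma~\ref{tiaua}, does not in general produce a saturated set. The trouble is that $\psi_{\sigma, \sigma'} = \phi_{\sigma'}^{-1}\circ\phi_\sigma$ need not carry $\cM_\rho$ into itself, because the rays of $\rho$ occupy different positions in the orderings of the rays of $\sigma$ and of $\sigma'$. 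Take $X = \mathbb{P}^2$ with rays $v_1=(1,0)$, $v_2=(0,1)$, $v_3=(-1,-1)$ in that global order, $\sigma = \langle v_1, v_2\rangle$, $j=(a,b)$, so $\rho = \langle v_2\rangle$, and $\sigma' = \langle v_2, v_3\rangle$. Then $v_2$ is in position $2$ of $\sigma$ but position $1$ of $\sigma'$, so $\phi_\sigma(\cM_\rho)=\{e\}\times E$ while $\phi_{\sigma'}(\cM_\rho)=E\times\{e\}$. Hence $\psi_{\sigma,\sigma'}(U_j^\sigma) = \phi_{\sigma'}^{-1}(U_a\times U_b)$ is disjoint from $\cM_\rho$ (its intersection with $\phi_{\sigma'}(\cM_\rho)$ would require $e\in U_b$), whereas $U_j^\sigma\cap\cM_\rho = \{e\}\times U_b$ is nonempty; so $V_j$ fails to contain the $\cM_{\sigma'}$-copies of those points of $U_j^\sigma\cap\cM_\rho$.

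Your check establishes that the components $\cM_{\sigma'}$ occur among the cones indexing $V_j$, but not that the actual fiber $\pi^{-1}(\pi(x))$ lies inside the pieces $\psi_{\sigma,\sigma^{(s)}}(U_j^\sigma)$: the latter is exactly what fails. The fix is to replace $\psi_{\sigma,\sigma'}(U_j^\sigma)$ by $U_{j'}^{\sigma'}$ for the unique $j'\in J$ agreeing with $U_j^\sigma$ along $\cM_\rho$; explicitly, $j'$ has $a$'s in the positions $S'$ of $\sigma'$ not carrying rays of $\rho$, and its remaining entries are those of $j$ transported via the order-preserving bijection between the positions of the rays of $\rho$ inside $\sigma$ and inside $\sigma'$ (in the example above $j'=(b,a)$, matching what the paper's Example~\ref{procedure} implicitly requires). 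With this modification, your affineness, uniqueness, and ordering arguments carry over. You should note that the gap is inherited directly from the paper's own proof of Lemma~\ref{tiaua}, which writes down the same incorrect transition; the statement of the Lemma (and the paper's later use of it) remains correct, but both proofs need the corrected gluing rule.
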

\begin{proof}
The statement is a small variation on Lemma \ref{tiaua} and the proof goes along the same lines. Namely we can set up a recursion where start from an open subset $U_j$ with $j\in J$ on a component $\cM_\sigma$ and, if it has non-trivial intersection with the preimage of the singular locus of $\cM_X$, we use the isomorphisms $\phi_\sigma$ to generate an actual open subset in $\cM_X$.

Before moving on however let us make a simple  observation. The open subsets $U_j$ with $j \in J_X$ all arise as intersections of open subsets in $\mathfrak{U}_X$. In fact, we can say something more precise. Assume that $U$ is an open subset in $\mathfrak{U}$, or more generally in $\{U_j\}_{j \in J}$, on a component $\cM_\sigma$. For the remainder of this argument, we will denote $\phi(U)$ the element of $\mathfrak{U}_X$ (or more generally of $\{U_j\}_{j \in J_X}$) which is  generated from $U$ via the procedure described in Lemma \ref{tiaua} and that, as we explained, also applies to the current Lemma \ref{tiauc2}. Now let $U$ be an open subset in $\{U_j\}_{j \in J}$, on a component $\cM_\sigma$. As we know, $U$ can be written as an intersection of open subsets $V_1 \ldots V_k$ in $\mathfrak{U}$, 
$
U=V_1 \cap \ldots \cap V_k 
$. 
It is easy to see that $\phi(U)$ can be written as the intersection
$$
\phi(U) = \phi(V_1) \cap \ldots \cap \phi(V_k). 
$$
\end{proof}
For every $j \in J_X$, we denote $\widetilde{U_j}:=\pi^{-1}(U_j)$. The open subset $\widetilde{U_j}$ is affine, and decomposes as a disjoint union of open subsets lying on various components of $\widetilde{\cM_X}$.  We obtain two diagrams indexed by $J_X$ in the category of schemes, namely $\{U_j\}_{j \in J_X}$ and $\{\widetilde{U_j}\}_{j \in J_X}$. In either case, the vertices are affine schemes, and the arrows are open inclusions.
\begin{lemma}
\label{colimit2}
There are isomorphisms
\begin{enumerate}
\item $\varinjlim_{j \in J_X} \widetilde{U_j} \cong \widetilde{\cM_X}$ 
\item $\varinjlim_{j \in J_X} U_j \cong \cM_X$
\end{enumerate}
\end{lemma}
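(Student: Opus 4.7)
The strategy for both parts is a standard gluing argument: a scheme can be recovered as the colimit, in the category of schemes, of any sufficiently rich diagram of its affine open subsets ordered by inclusion. We bootstrap from Lemma \ref{colimit}, which handles the case of $E^n$.

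For part $(1)$, the plan is to exploit the fact that $\widetilde{\cM_X} \cong \coprod_{\sigma \in \Sigma_X^{\mathrm{top}}} \cM_\sigma$ is a disjoint union of copies of $E^n$. Each $\widetilde{U_j}$ splits correspondingly as a coproduct $\coprod_\sigma(\widetilde{U_j} \cap \cM_\sigma)$, and this splitting is compatible with the inclusion maps in the $J_X$-indexed diagram because the inclusions $\widetilde{U_j}\subseteq \widetilde{U_{j'}}$ respect the component decomposition. Since colimits commute with coproducts in schemes, we obtain
$$
\varinjlim_{j \in J_X} \widetilde{U_j} \;\cong\; \coprod_\sigma \varinjlim_{j \in J_X}\bigl(\widetilde{U_j} \cap \cM_\sigma\bigr).
$$
By Lemma \ref{tiauc2} and the identification $\phi_\sigma: \cM_\sigma \cong E^n$, the inner diagram on each component is the diagram $\{U_j\}_{j \in J}$ on $E^n$, whose colimit is $E^n \cong \cM_\sigma$ by Lemma \ref{colimit}. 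Assembling over $\sigma$ yields $\widetilde{\cM_X}$.

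For part $(2)$, I would proceed directly. By Lemma \ref{tiaua}, $\mathfrak{U}_X$ is an affine open cover of $\cM_X$, and by the explicit recipe in the proof of Lemma \ref{tiauc2}, the collection $\{U_j\}_{j \in J_X}$ consists of all finite intersections of members of $\mathfrak{U}_X$ (together with the inclusion ordering). The standard descent statement for schemes — namely, that a scheme is the colimit of any such intersection-closed diagram of affine opens — then yields $\cM_X \cong \varinjlim_{j \in J_X} U_j$. Alternatively, one can deduce $(2)$ from $(1)$ via Corollary \ref{pushouttoric}: the normalization map $\pi:\widetilde{\cM_X} \to \cM_X$ exhibits $\cM_X$ as the colimit in schemes of a diagram gluing $\widetilde{\cM_X}$ along the divisors $\cM_\tau$ for $\tau \in (\Sigma^{n-1}_X)^*$, and this gluing is compatible with the colimit structure of $(1)$ since intersections with these divisors satisfy a one-dimension-lower instance of the same result (applied to $\cM_\tau \cong E^{n-1}$).

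The main obstacle is bookkeeping: on a fixed component $\cM_\sigma$, the restriction map $j \mapsto \widetilde{U_j} \cap \cM_\sigma$ need neither be injective nor have all its values in the standard collection $\{U_j\}_{j\in J}$ viewed on the nose — some $\widetilde{U_j}$ may miss $\cM_\sigma$ entirely, and distinct $j,j' \in J_X$ may produce the same open subset. Neither phenomenon affects the colimit: empty schemes are initial and contribute nothing, while duplicate restrictions correspond to identity arrows that collapse harmlessly. Verifying these reductions rigorously requires unpacking the recursive construction in the proofs of Lemmas \ref{tiaua} and \ref{tiauc2}, but the argument is routine once the combinatorics are unwound.
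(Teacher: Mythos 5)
Your proof of part $(1)$ is essentially identical to the paper's: decompose $\widetilde{\cM_X}$ into its connected components $\cM_\sigma \cong E^n$, observe each $\widetilde{U_j}$ restricts componentwise to a member of $\{U_j\}_{j\in J}$, and commute the colimit past the coproduct before invoking Lemma~\ref{colimit}. Your bookkeeping remarks (empty restrictions being initial, repeated restrictions collapsing) correctly anticipate the only possible friction.

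For part $(2)$ you lead with a route that differs in emphasis from the paper's. The paper argues by rewriting each $U_j$ as a coequalizer of $\widetilde{U_j}$ along the divisors $\cM_\tau$, then substitutes the colimit from part $(1)$ and commutes colimits, anchoring everything to Corollary~\ref{pushouttoric}. Your primary route instead invokes the general Zariski-gluing principle: a scheme is the colimit, in schemes, of any intersection-closed affine open cover. This is cleaner, but it leans on the assertion that $\{U_j\}_{j\in J_X}$ consists of \emph{all} nonempty finite intersections of $\mathfrak{U}_X$. The paper only establishes the weaker statement that every $U_j$ \emph{arises as} such an intersection (see the aside in the proof of Lemma~\ref{tiauc2}); it does not explicitly verify that intersections of members of $\mathfrak{U}_X$ are again in the family, so a reader filling in your argument would need to supply this closure property by unwinding the recursion in Lemmas~\ref{tiaua} and~\ref{tiauc2}. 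That said, you also sketch the Corollary~\ref{pushouttoric}-based alternative, which coincides with what the paper actually does and avoids the issue. The upshot: both of your approaches are sound, one matches the paper exactly, and the other is a slightly more conceptual shortcut that trades the paper's diagram-chasing for a small unverified combinatorial claim.
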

\begin{proof}
The first statement follows immediately from \ref{colimit2}. Indeed $\widetilde{\cM_X}$ is isomorphic to a disjoint union of copies of $E^n$
$$
\widetilde{\cM_X} \cong \coprod_{\sigma \in \Sigma^n_X} E^n
$$
On the other hand, each element of $\{\widetilde{U_j}\}_{j \in J_X}$ is a disjoint union of open subsets in $\{U_j\}_{j \in J}$ lying on different connected components of $\widetilde{\cM_X}$. This readily implies equivalence $(1)$, as disjoint unions are colimits  and therefore commute with colimits.  

The second claim is proved in a similar way. By Corollary \ref{pushouttoric} $\cM_X$ is isomorphic to the colimit 
$$
 \big [ \coprod_{\tau \in (\Sigma^{n-1}_X)^*} \cM_{\tau} \rightrightarrows \coprod_{k \in I} \cM_{\sigma_k} \big ] \to \cM_X
$$
where $I=\{1 \ldots m\}$ is the chosen indexing set for the top dimensional cones of $\Sigma_X$. 
By the first part of the claim we can replace  
$
\coprod_{\sigma \in \Sigma^n_X} \cM_{\sigma} \cong \widetilde{\cM_X}
$ 
with the colimit $\varinjlim_{j \in J_X} \widetilde{U_j}$. Next we can rewrite each $U_j$ as the colimit
\begin{equation}
\label{colimitopens}
\Big [ \big ( \coprod_{\tau \in (\Sigma^{n-1}_X)^*} \cM_{\tau} \big ) \bigcap \widetilde{U_j} \rightrightarrows \widetilde{U_j} \Big ] \to U_j
\end{equation}
Again, we can conclude because colimits commute with colimits.
\end{proof}

The open subsets $\widetilde{U_j}$ have the property that for every $\sigma \in \Sigma^n_X$, the intersection of 
$\widetilde{U_j}$ with a component $\cM_\sigma$ decomposes as a product of $U_a$, $U_b$ and 
$U_c$; further the number of factors equal to $U_a$, $U_b$ or $U_c$ stays always the same, independently of $\sigma$. This observation allows us to meaningfully define, for every $0 \leq k \leq n$, the sub-poset $J_{X,k} \subset J_X$ to be the subset of indices such that all connected components of 
  $\widetilde{U_j}$ decompose as a product where the factor  
  $U_c$  appears exactly $k$ times.     
 
Lemma \ref{smallcomplex2} below is a generalization of 
  Lemma \ref{smallcomplex}. We split it into two parallel statements, one  for $\widetilde{\cM_X}$ and one for $\cM_X$; a  third claim  gives an explicit description of the pull-back  along $\pi$ in coherent cohomology.

  \begin{lemma}
  \label{smallcomplex2}
Consider the derived global sections $H^*(\mathcal{O}_{\widetilde{\cM_X}})$ and 
$H^*(\mathcal{O}_{\cM_X})$. 
\begin{enumerate}
\item The object $H^*(\mathcal{O}_{\widetilde{\cM_X}})$ in 
  $k\text{-}\mathrm{mod}$ is equivalent to a complex $Q^\bullet$ having the following properties 
  \begin{enumerate}
  \item For every $0 \leq k \leq n$, $Q^k \cong  \oplus_{j \in J_{X,k}} H^0(\mathcal{O}_{\widetilde{U_j}})$
    \item If $k < 0$ or $k>n$, then $Q^k=0$
    \item The differential $d^k:Q^{k-1} \to Q^k$ is given by a sum of restrictions, possibly with signs: more precisely, if $l$ is in $J_{X, k-1}$ then the restriction of the differential    $$
    d^{k}|_{H^0(\mathcal{O}_{\widetilde{U_l}})}: H^0(\mathcal{O}_{\widetilde{U_l}}) \to \oplus_{j \in J_{X,k}} H^0(\mathcal{O}_{\widetilde{U_j}})
    $$
    has as factors the pull-back maps $H^0(\mathcal{O}_{\widetilde{U_l}}) \to H^0(\mathcal{O}_{\widetilde{U_j}})$, where $\widetilde{U_j} \subset \widetilde{U_l}$ with appropriate twists by $-1$.
  \end{enumerate}
  \item The object $H^*(\mathcal{O}_{\cM_X})$ in 
  $k\text{-}\mathrm{mod}$ is equivalent to a complex $R^\bullet$ having the following properties 
  \begin{enumerate}
  \item For every $0 \leq k \leq n$, $R^k \cong  \oplus_{j \in J_{X,k}} H^0(\mathcal{O}_{U_j})$
    \item If $k < 0$ or $k>n$, then $R^k=0$
    \item The differential $d^k:R^{k-1} \to R^k$ is given by a sum of restrictions, possibly with signs: more precisely, if $l$ is in $J_{X, k-1}$ then the restriction of the differential    $$
    d^{k}|_{H^0(\mathcal{O}_{U_l})}: H^0(\mathcal{O}_{U_l}) \to \oplus_{j \in J_{X,k}} H^0(\mathcal{O}_{U_j})
    $$
    has as factors the pull-back maps $H^0(\mathcal{O}_{U_l}) \to H^0(\mathcal{O}_{U_j})$, where $U_j \subset U_l$ with appropriate twists by $-1$.
  \end{enumerate}
  \item The pull-back morphism $\pi^*:H^*(\mathcal{O}_{\cM_X}) \to H^*(\mathcal{O}_{\widetilde{\cM_X}})$ is equivalent to the morphism of complexes $(\pi^*)^\bullet: R^\bullet \to Q^\bullet$ whose $k$-th component  
   $$
(\pi^*)^k: \oplus_{j \in J_{X,k}} H^0(\mathcal{O}_{U_j}) \to 
  \oplus_{j \in J_{X,k}} H^0(\mathcal{O}_{\widetilde{U_j}})
  $$
is the product of the pull-back maps $H^0(\mathcal{O}_{U_j}) \to 
 H^0(\mathcal{O}_{\widetilde{U_j}})$.
  \end{enumerate}
  \end{lemma}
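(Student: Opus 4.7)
The plan is to mirror the argument of Lemma \ref{smallcomplex} almost verbatim, now working with the poset $\{U_j\}_{j\in J_X}$ (resp. $\{\widetilde{U_j}\}_{j\in J_X}$) in place of $\{U_j\}_{j\in J}$, and to deduce the compatibility statement $(3)$ from the functoriality of the whole construction with respect to the normalization map $\pi$. The starting point in each case is Proposition \ref{descent} applied to Lemma \ref{colimit2}: since each $\widetilde{U_j}$ and each $U_j$ is affine, we get equivalences
\[
H^*(\mathcal{O}_{\widetilde{\cM_X}}) \simeq \varprojlim_{j\in J_X} H^0(\mathcal{O}_{\widetilde{U_j}}), \qquad
H^*(\mathcal{O}_{\cM_X}) \simeq \varprojlim_{j\in J_X} H^0(\mathcal{O}_{U_j}),
\]
and the natural transformation $\pi^*$ between them is induced by the componentwise pull-back maps $H^0(\mathcal{O}_{U_j})\to H^0(\mathcal{O}_{\widetilde{U_j}})$.

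For part $(1)$, I would first note that $\widetilde{\cM_X}$ is literally a disjoint union of copies of $E^n$, one for each top-dimensional cone, and that under the identifications $\phi_\sigma$ the family $\{\widetilde{U_j}\}_{j\in J_X}$ restricted to each component $\cM_\sigma\cong E^n$ is precisely the family $\{U_j\}_{j\in J}$ of Section \ref{sec:tccoe}. Thus $\varprojlim_{j\in J_X} H^0(\mathcal{O}_{\widetilde{U_j}})$ splits as a direct sum indexed by $\Sigma_X^{\mathrm{top}}$ of copies of the limit computed in Lemma \ref{h*lim}, and applying Lemma \ref{smallcomplex} to each summand produces a complex with the three required properties.

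For part $(2)$, I would repeat the inductive argument of Lemma \ref{smallcomplex} with $J_X$ in place of $J$. The key observation is that, by construction (Lemma \ref{tiauc2}), the poset $J_X$ inherits the ``coordinate-wise'' combinatorial structure of $J$: one can split $J_X$ into three full subposets $J_{X,\ast}$ ($\ast\in\{a,b,c\}$) according to the last coordinate letter that appears on every component of $\widetilde{U_j}$, and each subposet admits the obvious description in terms of the $(n-1)$-dimensional analogue for the fan obtained by projecting $\Sigma_X$. Writing $\varprojlim_{j\in J_X} H^0(\mathcal{O}_{U_j})$ as the cocone of a fiber product of the three corresponding limits, and invoking the inductive hypothesis on each, produces a complex with terms indexed by $J_{X,k}$ and restriction-type differentials exactly as in the proof of Lemma \ref{smallcomplex}. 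The base case $n=1$ is the elementary Mayer--Vietoris for $\cM_X$ viewed as a chain of $E$'s glued at the identity, which follows immediately from Corollary \ref{pushouttoric}.

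Part $(3)$ is then essentially formal: the open inclusions $\widetilde{U_j}\hookrightarrow \widetilde{\cM_X}$ factor through $\pi$, so the comparison morphism $\pi^\ast$ is compatible with the diagrams computing $Q^\bullet$ and $R^\bullet$, and under the equivalences produced in $(1)$ and $(2)$ it acts componentwise as the pull-back $H^0(\mathcal{O}_{U_j})\to H^0(\mathcal{O}_{\widetilde{U_j}})$. The main technical obstacle is the bookkeeping in the inductive step of $(2)$: one must ensure that the identifications arising from Lemma \ref{colimit2} genuinely preserve the ``last coordinate letter'' partition as one moves between components glued along the singular locus of $\cM_X$. This is exactly what Lemmas \ref{tiaua} and \ref{tiauc2} were designed to guarantee, via the chosen automorphisms $\phi_\sigma$ of $N_T$; so the delicate point is simply to invoke those uniqueness statements at each stage of the induction so that the inductive hypothesis applies verbatim and the signs match those in the $E^n$ case.
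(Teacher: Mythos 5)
Your treatment of parts (1) and (3) agrees with the paper: part (1) reduces to Lemma \ref{smallcomplex} applied component by component via the splitting $\widetilde{\cM_X} \cong \coprod_{\sigma} E^n$, and part (3) is a formal functoriality statement.

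For part (2), however, there is a genuine gap in the inductive strategy as you describe it. You propose to partition $J_X$ into subposets $J_{X,*}$ ``according to the last coordinate letter that appears on every component of $\widetilde{U_j}$.'' But this is not well-defined: the identifications $\phi_\sigma \colon \cM_\sigma \cong E^n$ are constructed using the global ordering of the rays of $\Sigma_X$ restricted to each $\sigma$, and a given ray of $\Sigma_X$ will typically occupy a \emph{different} position in the ordered ray sets of two maximal cones $\sigma$, $\sigma'$ that contain it. Consequently, when $\widetilde{U_j}$ is propagated from a component $\cM_\sigma$ to a neighbouring component $\cM_{\sigma'}$ by the recipe of Lemma \ref{tiauc2}, the letters $a,b,c$ may land in different coordinate positions on the two components. (Already in dimension $2$, with rays $v_1, v_2, v_3$ and maximal cones $\sigma_1 = \mathrm{cone}(v_1, v_2)$, $\sigma_2 = \mathrm{cone}(v_2, v_3)$, the common ray $v_2$ is the $2$nd ray of $\sigma_1$ but the $1$st ray of $\sigma_2$, so a factor in the last position on $\cM_{\sigma_1}$ can become a factor in the first position on $\cM_{\sigma_2}$.) What \emph{is} invariant across components, as the paper observes just before the statement of Lemma \ref{smallcomplex2}, is the \emph{number} of factors equal to $U_a$, $U_b$ or $U_c$ — which is enough to define $J_{X,k}$ — but not their positions. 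So the three-way decomposition of $J_X$ you need for the inductive step is not available, and Lemmas \ref{tiaua} and \ref{tiauc2} do not rescue it: their uniqueness claims concern the existence of the cover itself, not the compatibility of a coordinate projection across components.

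The paper sidesteps this entirely by \emph{not} running an induction on $n$ for $\cM_X$. Instead it feeds the pushout from Corollary \ref{pushouttoric} into descent, obtaining
\[
H^*(\mathcal{O}_{\cM_X}) \longrightarrow \bigl[\, Q^\bullet \rightrightarrows \prod_{\tau \in (\Sigma^{n-1}_X)^*} P^\bullet_{\cM_\tau} \,\bigr],
\]
where $Q^\bullet$ comes from part (1) and $P^\bullet_{\cM_\tau}$ comes from Lemma \ref{smallcomplex} applied to $\cM_\tau \cong E^{n-1}$. The two parallel arrows are degree-wise surjective (they are products of restriction maps $H^0(\mathcal{O}_U) \to H^0(\mathcal{O}_{U \cap \cM_\tau})$ for affine $U$ that factor as $U \cong (U\cap\cM_\tau)\times V$), so the homotopy limit coincides with the strict degree-wise equalizer. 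Computing those equalizers using the colimit description (\ref{colimitopens}) of the $U_j$ identifies the result with $R^\bullet$. If you want to repair your proposal, you would either need to replace the ``last coordinate'' splitting with one that is well-defined on $\cM_X$ globally, or switch to this equalizer argument, which is shorter and avoids the bookkeeping.
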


\begin{proof}
Recall that $\widetilde{\cM_X}$ is isomorphic to $\coprod_{\sigma \in \Sigma^n_X} E^n$, and therefore  there is an equivalence 
\begin{equation}
\label{directdirect}
H^*(\mathcal{O}_{\widetilde{\cM_X}}) \simeq \bigoplus_{\sigma \in \Sigma^n_X} H^*(\mathcal{O}_{E^n})
\end{equation}
Thus the first claim can be proved by taking as starting point  the colimit $\varinjlim_{j \in J_X} \widetilde{U_j} \cong \widetilde{\cM_X}$, and then proceed exactly as in the proof of Lemma \ref{smallcomplex}. Alternatively, we could  deduce claim $(1)$ directly from Lemma  \ref{smallcomplex} and the splitting (\ref{directdirect}). 

Let us discuss the second claim. Again several different  strategies are possible. For instance, much as in the previous case, one can use claim $(2)$ of Lemma \ref{colimit2} and then adapt the proof of Lemma \ref{smallcomplex}. Let us sketch a slightly different argument, which is very similar to the proof of Lemma \ref{colimit2}.

We  express $\cM_X$ as the colimit 
   $$
 \big [ \coprod_{\tau \in (\Sigma^{n-1}_X)^*} \cM_{\tau}  \rightrightarrows \coprod_{\sigma \in \Sigma^n_X} \cM_{\sigma} \big ] \to \cM_X
$$
Arguing as in the proof of Lemma \ref{h*lim}, we turn this colimit into a limit of derived global sections
$$
 H^*(\mathcal{O}_{\cM_X}) \to \big [ H^*(\mathcal{O}_{\widetilde{\cM_X}})  
 \rightrightarrows  \prod_{\tau \in (\Sigma^{n-1}_X)^*} H^*(\mathcal{O}_{\cM_{\tau}})  \big ] 
$$
We can replace both $H^*(\mathcal{O}_{\widetilde{\cM_X}})$ and $H^*(\mathcal{O}_{\cM_{\sigma \cap  \sigma'}})$ with the models provided, respectively, by the first part of Lemma \ref{smallcomplex2}, which we have just discussed; and by  Lemma \ref{smallcomplex}:  indeed  $\cM_{\tau}$ is isomorphic to $E^{n-1}$, and thus  Lemma \ref{smallcomplex} applies to it. We obtain the limit
\begin{equation}
\label{limitlimit2}
 H^*(\mathcal{O}_{\cM_X}) \to \big [ Q^\bullet  
 \rightrightarrows  \prod_{\tau \in (\Sigma^{n-1}_X)^*} P^\bullet_{\cM_\tau} \big ] 
\end{equation}
where we write $P^\bullet_{\cM_\tau}$ to indicate that this is the complex computing  $H^*(\mathcal{O}_{\cM_{\tau}})$.

 Following through the quasi-isomorphisms   leading up to formula (\ref{limitlimit2}), one realizes that the two parallel arrows are just a product of pull-back maps of the form 
$$ 
i^*: H^0(U) \to H^0(U \cap \cM_\tau) 
$$ 
where $U \subset \cM_\sigma$ is an affine open subset, $\cM_\tau \subset \cM_\sigma$ is a divisor, and $U \cong \big ( U \cap \cM_\tau \big ) \times V$ where $V$ is an affine open subset of $E$. In particular, they are degree-wise surjective maps. Under this assumption, the homotopy limit coincides with the classical limit in the abelian category of complexes. As  $H^*(\mathcal{O}_{\cM_X})$ is the limit of the diagram, it is  equivalent to a complex 
$R^\bullet$ whose components are given by degree-wise equalizers: we claim that $R^\bullet$ has all the properties required by the second part of the Lemma. 

Let us explain why this is the case. The degree-wise equalizers are direct sums of diagrams of the form  
\begin{equation}
\label{limitopens}
\Big [ H^0(\mathcal{O}_{\widetilde{U_j}}) 
\rightrightarrows
H^0(\mathcal{O}_{\big ( \coprod_{\tau \in (\Sigma^{n-1}_X)^*} \cM_{\tau} \big ) \bigcap \widetilde{U_j} })  \Big ]  
\end{equation}
where $j$ is in $J_{X,k}$. The equalizer of (\ref{limitopens}) is $H^0(\mathcal{O}_{U_j})$. This can be seen from the colimit  (\ref{colimitopens}),  which we discussed in course of the proof of Lemma \ref{colimit2}. We obtain the direct sum decomposition $$R^k \cong  \bigoplus_{j \in J_{X,k}} H^0(\mathcal{O}_{U_j})$$ for every $0 \leq k \leq n$.

The third part of the Lemma is easily checked, and we leave it to the reader.
\end{proof}

We are now ready to prove Theorem \ref{coho}. 
The main idea of the argument is not difficult, and it can be best understood in the simple example $X=\mathbb{P}^1$. We explain this calculation in Example \ref{examplep1}, after the proof.

\begin{proof}[Proof of Theorem \ref{coho}]
Consider the normalization map 
$\pi: \widetilde{\cM_X} \to \cM_X$. The variety $\widetilde{\cM_X}$ is isomorphic to the disjoint union of $m=|\Sigma^n_X|$ copies of $E^n$. 
Thus  there is  an isomorphism 
$$
H^*(\mathcal{O}_{\widetilde{\cM_X}}) \cong \bigoplus_{\sigma \in \Sigma_X^n} 
H^*(\mathcal{O}_{\cM_\sigma}) \cong 
\bigoplus_{i=1}^{m} H^*(\mathcal{O}_{E^n})
$$
In particular 
$$
\mathrm{dim}(H^n(\mathcal{O}_{\widetilde{\cM_X}})) = |\Sigma^n_X| = m
$$
We will prove that $\pi^*$ induces an isomorphism 
\begin{equation}
\label{cong}
(\pi^*)^n:H^n(\mathcal{O}_{\widetilde{\cM_X}}) \stackrel{\cong}\longrightarrow
H^n(\mathcal{O}_{\cM_X})
\end{equation}
This implies that $H^n(\mathcal{O}_{\cM_X})$ is $m$-dimensional, which is what we want to show.

The model for $\pi^*$ provided by Lemma \ref{smallcomplex2} gives a morphism of complexes of the following shape
$$
\xymatrix{
\ldots \ar[r] &  Q^{n-2}  \ar[r]^-{d_Q^{n-1}} & Q^{n-1} \ar[r]^-{d_Q^{n-1}} & Q^n \ar[r] & 0 \ar[r] & \ldots \\
\ldots \ar[r] \ar[u] &  R^{n-2} \ar[u]^-{(\pi^*)^{n-2}}  \ar[r]_-{d_R^{n-1}} & R^{n-1} \ar[r]_-{d_R^{n}}  \ar[u]^-{(\pi^*)^{n-1}}  & R^n   \ar@{=}[u]^-{(\pi^*)^{n}}  \ar[r] & 0  \ar@{=}[u] \ar[r] & \ldots
}
$$
The degree $n$ map $(\pi^*)^n$ is indicated in the diagram by the identity sign because it is a canonical isomorphism. Indeed if 
$j$ is in $J_{X,n}$ then $U_j \subset \cM_X$ is irreducible and does not intersect the singular locus. Thus the restriction of the normalization 
$\pi:\widetilde{U_j} \to U_j$ is an isomorphism. As the morphism $(\pi^*)^n$ is a product of pull-back maps $\pi^*:H^0(\mathcal{O}_{U_j}) \to H^0(\mathcal{O}_{\widetilde{U_j}})$, it is also a (canonical) isomorphism. 

We will prove that the maps $d^n_Q$ and $d^n_R$ have equal image in $R^n=Q^n$. Let us  break down this claim further. We need to show that for every $j \in J_{X, k-1}$, and every element $\theta \in H^0(\mathcal{O}_{\widetilde{U_j}})$ there is an element 
$$
\xi \in R^{n-1} = \bigoplus_{j \in J_{X,n-1}} H^0(\mathcal{O}_{U_j}) \quad \text{such that} \quad d^n_Q(\theta) = d^n_R(\xi).
$$

We have two possibilities: either $U_j$ is smooth or, which is the same, irreducible; or $U_j$ is singular, and has two irreducible components. In the first case $\widetilde{U_j}$ is  identified with $U_j$ under $\pi$, and is isomorphic to a product of the form
$$
U_c \times \ldots  \times U_b \times \ldots \times U_c
$$
where $U_b$ occurs once, say in $k$-th position, and all the other factors are equal to $U_c$. 
In the second case, $\widetilde{U_j}$ is isomorphic to two copies of an open subset $U$ of the form
$$
U=U_c \times \ldots \times U_a \times \ldots \times U_c
$$
where $U_a$ occurs once, say in $l$-th position, and all the other factors are equal to $U_c$.  Denote by $D$ the divisor
$$
D=U_c \times \ldots \times \{e\}  \times \ldots \times U_c  \stackrel{\subset} \longrightarrow U
$$
Then $U_j$ is singular, and is isomorphic to a  pushout of the form 
$$
\xymatrix{
D  \ar[r] \ar[d] &U \ar[d]\\  
U \ar[r] & U_j}
$$

In the first case, there is nothing to prove. Indeed, we have an identification
$$
R^{n-1} \supset H^0(\mathcal{O}_{U_j}) \stackrel{\pi^*} \longrightarrow 
H^0(\mathcal{O}_{\widetilde{U_j}})
$$ 
and so we can set $\xi=\theta$. Let us focus on the second case, and describe in more detail the structure of the restriction of $d^n_R$ and $(\pi^*)^{n-1}$ to 
$H^0(\mathcal{O}_{U_j}) \subset R^{n-1}$. First of all, the map 
$$
d^n_R: H^0(\mathcal{O}_{U_j})  \to R^{n}
$$
has two components. Indeed, $U_j$ has two irreducible components, which we will denote  $V_j$ and $W_j$. There are two distinct open sets $V$ and $W$ which are  isomorphic to $(U_c)^n$ and are subsets of $U_j$  $$
V \rightarrow V_j \subset U_j \supset  W_j \leftarrow W
$$  
As we discussed, $H^0(\mathcal{O}_{V})$ and 
$H^0(\mathcal{O}_{W})$ appear as factors of both $Q^n$ and $R^n$, as $\widetilde{V}=V$ and $\widetilde{W}=W$.

The key point is that $V$ and $W$  are also contained in open subsets $U_{j'}$ and $U_{j''}$  which are both smooth and of the form 
$$
U_c \times \ldots \times U_b \times \ldots \times U_c
$$
where $U_b$ is in $l$-th position, and all other factors are equal to $U_c$. In particular, 
$\widetilde{U_{j'}}=U_{j'}$ and $\widetilde{U_{j''}}=U_{j''}$.

We are now ready to sum up the shape of the relevant maps in the diagram below
$$
\xymatrix{
\textcolor{blue}{R^{n-1}:} & 
\textcolor{blue}{H^0(\mathcal{O}_{U_{j'}})} \ar@[blue][dr]^{\textcolor{blue}{(d^n_R)_{U_j'}^V}} && 
\textcolor{blue}{H^0(\mathcal{O}_{U_{j}})} \ar@[blue][dr]^{\textcolor{blue}{(d^n_R)^W_{U_j}}} \ar@[blue][dl]_{\textcolor{blue}{(d^n_R)^V_{U_j}}}  \ar[ddd]^{\pi^*} && \textcolor{blue}{H^0(\mathcal{O}_{U_{j''}})} \ar@[blue][dl]_{\textcolor{blue}{(d^n_R)^V_{U_{j''}}}} \\
\textcolor{blue}{R^n:} & & \textcolor{blue}{H^0(\mathcal{O}_{V})} && \textcolor{blue}{H^0(\mathcal{O}_{W})} & \\
\textcolor{red}{Q^n:} & & \textcolor{red}{H^0(\mathcal{O}_{\widetilde{V}})} \ar@{=}[u]^{\pi^*} && \textcolor{red}{H^0(\mathcal{O}_{\widetilde{W}})} \ar@{=}[u]_{\pi^*} & \\
\textcolor{red}{Q^{n-1}:} & 
&& 
\textcolor{red}{H^0(\mathcal{O}_{\widetilde{U_{j}}}) = H^0(\mathcal{O}_{U})^{\oplus 2}} 
\ar@[red][ur]^{\textcolor{red}{(d^n_Q)_{\widetilde{U_{j}}}^{\widetilde{W}}}} \ar@[red][ul]_{\textcolor{red}{(d^n_Q)_{\widetilde{U_{j}}}^{\widetilde{V}}}} && 
}
$$ 
Let us clarify the meaning of the diagram. In blue and red we have reproduced, respectively,  the relevant  portions of  the differentials 
$$
d^n_R:R^{n-1} \to R^n \quad \text{and of} \quad 
d^n_Q:Q^{n-1} \to Q^n
$$
Up to signs, the components of the differentials are just given by restrictions to open subsets. We have decorated them with indices making explicit their source and target, but we have omitted the signs: so, for instance,  we should interpret the diagram  as saying that  we can write
$$
d^n_R|_{H^0(\mathcal{O}_{U_j})} = \pm (d^n_R)^V_{U_j} \pm (d^n_R)^W_{U_j}
$$
up to the appropriate choice of the signs. For more comments  on the issue of signs, see below. 
 The vertical arrows, in black, denote the components of the pull-back $\pi^*$.

Now let us consider an element
$$
\theta = (\theta_1, \theta_2) \in H^0(\mathcal{O}_{\widetilde{U_{j}}}) = H^0(\mathcal{O}_{U}) \oplus  H^0(\mathcal{O}_{U}) 
$$
We  assume that 
$\theta_2=0$. Indeed, we can write $\theta=(\theta_1,0) + (0,\theta_2)$, and our argument carries over without variations to the summand with $\theta_1=0$. We have that 
$$
(d^n_Q)_{\widetilde{U_{j}}}^{\widetilde{V}}(\theta)=\theta_1 \quad  \quad (d^n_Q)_{\widetilde{U_{j}}}^{\widetilde{W}}(\theta)=0
$$
 It is easy to contruct a $\xi' \in H^0(\mathcal{O}_{U_j})$ such that $(d^n_Q)_{U_{j}}^{V}(\xi')=\theta_1$. However  $(d^n_Q)_{U_{j}}^{W}(\xi')$ will not vanish, in general. The two components of $U_j$ are glued along $D$: to construct $\xi'$, we have to extend $\xi$ from $V_j$ to  the whole of $U_j$. The resulting function, in general, will be non-zero over $W_j$. 

However we can define $\xi'$ in such a way that it  is constant away from the $l$-th direction. Let us explain this point more precisely. As the open subset $W_j$ is a product, we have a projection $pr_{W_j}: W_j \to D$. We  can define $\xi'$ as follows
 \begin{itemize}
 \item If $x$ is in $V_j$, then $\xi'(x)=\theta(x)$
 \item If $x$ is in $W_j$, then $\xi'(x)=\theta\circ pr_{W_j}(x)$
 \end{itemize}
As we mentioned, the issue is that 
$$
(d^n_Q)_{U_{j}}^{W}(\xi')(x) =\theta\circ pr(x)
$$
is non-zero in general and so does not match $(d^n_Q)_{\widetilde{U_{j}}}^{\widetilde{W}}(\theta)=0$.

Now we can use the fact that $U_{j''}$ is also a product, and thus it carries a projection $pr_{U_{j''}}:U_{j''} \to D$. We define\footnote{In the definition of $\theta''$ below the choice of the  negative sign is purely indicative. It would be more correct to  leave the sign unspecified: the sign of $\theta''$ actually depends on the signs of the differentials in Lemma \ref{smallcomplex2}, which we chose not to identify as the argument does not depend on them. This sign ambiguity affects also the rest of the proof, e.g.  (\ref{propaga}), but it is harmless.}
\begin{itemize}
\item $
\xi'' \in H^0(\mathcal{O}_{U_{j''}}), \quad \xi''(x)=- \theta\circ pr_{U_{j''}}(x)
$
\item $\xi:=\xi' + \xi'' \in H^0(\mathcal{O}_{U_j}) \oplus H^0(\mathcal{O}_{U_{j''}}) \subset R^{n-1}$
\end{itemize}
We obtain a chain of equalities 
\begin{equation}
\label{propaga}
d^n_R(\xi) = d^n_R(\xi') + d^n_R(\xi'')= ( \theta_1 + \theta\circ pr_{W_j}(x) ) - \theta\circ pr_{W_j}(x) = \theta_1=d^n_Q(\theta)
\end{equation}
The image of $\xi$ under $d^n_R$ matches the image of $d^n_Q(\theta)$, and this concludes the proof.
\end{proof}

\begin{example}
\label{examplep1} 
Let us explain the  proof of Theorem \ref{coho} in the simple case $X=\mathbb{P}^1$. The variety $\cM_X$ has two irreducible components, both isomorphic to $E$, meeting transversely at a point. We denote these two components $E_1$ and $E_2$. 
As top coherent cohomology computes the arithmetic genus, which is invariant in flat families, we already know that 
$$
dim(H^1(\cO_{\cM_X}))=2
$$
Indeed the variety $\cM_X$ admits a flat deformation to a smooth curve of genus $2$. The normalization $\widetilde{\cM_X}$ is the disjoint union of  $E_1$ and $E_2$.

In addition to the low-dimensionality, this example is much simpler than the general case also because, to compute coherent cohomology,  we can directly use the \v{C}ech complex of the cover $\mathfrak{U}_X$. 
We described   $\mathfrak{U}_X$  in Example \ref{procedure}: it contains three open subsets 
\begin{enumerate}
\item $U_1:=E_1-\{e\}$
\item $U_2:=E_2 - \{e\}$
\item and  $U_3$, which is the push-out of the diagram 
$$
E_1-\{p\} \longleftarrow \{e\} \longrightarrow E_2-\{p\}
$$
\end{enumerate}
The normalizations look as follows: $\widetilde{U_1}=U_1$ and  $\widetilde{U_2}=U_2$, while $\widetilde{U_3}=E_1-\{p\} \amalg E_2-\{p\}$. We obtain the following morphism of \v{C}ech complexes 
$$
\xymatrix{
Q^0:=H^0(\cO_{\widetilde{U_1}}) \oplus H^0(\cO_{\widetilde{U_2}}) \oplus H^0(\cO_{\widetilde{U_3}}) \ar[r]^-{d_Q} & Q^1:=H^0(\cO_{E_1-\{e, p\}}) \oplus H^0(\cO_{E_2-\{e, p\}}) \ar[r] & 0 
\\
R^0:=H^0(\cO_{U_1}) \oplus H^0(\cO_{U_2}) \oplus H^0(\cO_{U_3}) \ar[r]^-{d_R}  \ar[u]^{\pi^*} & R^1:=H^0(\cO_{E_1-\{e, p\}}) \oplus H^0(\cO_{E_2-\{e, p\}}) \ar@{=}[u] \ar[r] & 0
}
$$

We want to prove that these two  complexes have the same top  cohomology: equivalently, that $d_R$ and $d_Q$ have the same image in $Q^1=R^1$. Since for $i=1$ and $2$ we have that $U_i = \widetilde{U_i}$, we only need to worry about the image of the summand $H^0(\cO_{\widetilde{U_3}})$. We will check that for every 
$$\theta=(\theta_1, \theta_2) \in H^0(\cO_{\widetilde{U_3}}) \cong H^0((\cO_{E_1-\{p\}})) \oplus 
H^0((\cO_{E_2-\{p\}}))
$$ there is a $\xi \in R^0$ such that $d_Q((\theta_1, \theta_2)) = d_R(\xi)$.

We define a function $\xi'$ on $U_3$ with the following properties: 
\begin{enumerate}
\item $\xi'|_{E_1-\{p\}} = \theta_1$
\item $\xi'|_{E_2-\{p\}} = \theta_2 + ( \theta_1(0) - \theta_2(0))$
\end{enumerate}
This pins down  $\xi'$ uniquely: note that $\xi'$ is well defined, as it takes the same value on $0 \in E_1$ and $0 \in E_2$, which are identified in $\cM_X$. Next we define 
$\xi'' \in H^0(U_2)$ to be the constant function, equal to $\theta_2(0)-\theta_1(0)$. Finally we set 
$$
\xi = (0, \xi'', \xi') \in H^0(\cO_{U_1}) \oplus H^0(\cO_{U_2}) \oplus H^0(\cO_{U_3})
$$
Then we have that 
$$
d_Q(\theta) = - \theta_1|_{E_1-\{p\}} -  \theta_2|_{E_2-\{p\}} = 
- \theta_1|_{E_1-\{p\}} -  \theta_2|_{E_2-\{p\}} - ( \theta_1(0) - \theta_2(0)) + \theta_2(0) - \theta_1(0) = d_R(\xi)
$$
and this concludes the proof.
 \end{example}

 \section{Equivariant elliptic cohomology and derived equivalences}
 \label{derivedequivalences}
    One of the goals of this article is to show that equivariant elliptic cohomology is not invariant under equivariant derived equivalences. This is in contrast with the behaviour of equivariant K-theory and equivariant singular cohomology, which are both derived invariants.  
    
 We will focus on a specific three-dimensional example of good toric varieties that are (equivariantly) derived equivalent, but have different elliptic cohomology. As it will be apparent from our construction, it is easy to produce a great wealth of such examples: in fact, virtually all pairs of equivariantly derived equivalent toric varieties arising in this way  have non-isomorphic elliptic cohomology. The key ingredient is   Kawamata's toric McKay correspondence. We restrict to the simplest setting of finite abelian quotients of $\bA^n$, as this will be sufficient for our purposes, but we point out that Kawamata's results hold in much greater generality.

 Let $G$ be a finite subgroup of the torus $T$ acting on $\bA^n$. Assume that $G$ is a subgroup of $SL_n(\bC)$, that is, the $G$-action preserves the Calabi--Yau structure on  $\bA^n$ given by
 $$
 \Omega = dx_1 \wedge \ldots \wedge dx_n
 $$
 The quotient $X=\bA^n/G$ is a singular simplicial toric variety. Up to isomorphisms of $N_T$, the fan of $X$ has the following shape:
choose a splitting $N_T \cong H \otimes \mathbb{Z}$, where $H$ is a lattice of dimension $n-1$; then there is a lattice simplex $\Delta \subset H \otimes \mathbb{R}$ such that the fan of $X$ is the cone over $\Delta$. Namely   $\Delta$ carries a natural stratification whose strata are subsimplices $\Delta_\tau \subset \Delta$. Then the set 
 $$
 \tau := \{v \in N_T \otimes \mathbb{R} \, \, | \, \, v = \lambda x, \quad x \in \Delta_{\tau}, \, \lambda \in \mathbb{R}_{>0} \}
 $$
 is a closed convex cone, and 
 $$
 \Sigma_X = \{ \tau \, \, | \, \, \Delta_\tau \subset \Delta \}
 $$

 \emph{Crepant} toric resolutions of $X$ are in bijection with unimodular triangulations of $\Delta$;  here crepant means that the resolution is also CY, i.e. it has a trivializable canonical bundle. Given one such triangulation $\cT$, the cones over the strata of $\cT$ form a smooth fan $\Sigma_\cT$. We set $X_\cT:=X_{\Sigma_\cT}$. As $\Sigma_\cT$ is a subdivision of $\Sigma$ it defines a toric map
 $ \, 
 p_\cT: X_\cT \to X
 $, 
 which is a resolution of singularities. 
 
 \begin{example}
 Let us consider the two-dimensional case of the story described above. We set  $T:=(\mathbb{C}^*)^2$, and $N_T:=\mathbb{Z}^2$. Consider $\bA^2$ with the standard $T$-action. We choose coordinates $(h_1, h_2)$ on $N_T$ and 
 $(x_1, x_2)$ on $\mathbb{A}^2$. Let $G$ be the group $\mu_n$ of $n$-th roots of unity acting anti-diagonally on $\mathbb{A}^2$
 $$
 \xi \in \mu_n, \quad \xi \cdot (x_1, x_2) = (\xi x_1, \xi^{-1} x_2)
 $$ 
 This action preserves the CY form $\omega=dx_1\wedge dx_2$; it is easy to see that these are the only finite subgroups of $T$ preserving it. Up to isomorphisms of $N_T$, the fan of 
 $\mathbb{A}^n/G$ can be described as follows.

 Consider  the line 
 $H \subset N_T$ given by $H=\{(h_1, h_2)\, | \, h_2=1\}.$ We  equip $H$ with the  coordinate $h_1$. 
 A lattice polytope in $H \otimes \mathbb{R}$ is just an interval, and we let $$\Delta =\{ h_1 \in H \, | \, 0 \leq h_1 \leq n \}$$ The fan of $\mathbb{A}^n/G$ is given by the cone over the strata of $\Delta$. Unimodular triangulations here are just subdivisions in subintervals of length one, and there is evidently a unique such triangulation $\cT$ of $\Delta$. The toric surface $X_\cT \to X$ is an iterated toric blow-up of $X$, and is a crepant resolution of $X$.
 \end{example}
       
There is one more object playing a role in the McKay correspondence, namely the stacky quotient $Y:=[\mathbb{A}^n/G]$. This is smooth a toric DM stack, in the sense of \cite{fantechi2010smooth}. The coarse moduli map 
 $q: [\mathbb{A}^n/G] \to \mathbb{A}^n/G$ is toric and birational, so it can be viewed as a stacky resolution. 
\begin{proposition}
\label{equivderiv}
Let $G$, $X$ $\Delta$ and $Y$ be as above. Let $\cT$ and $\cT'$ be two unimodular triangulations of $\Delta$. 
\begin{enumerate}
\item There is an equivalence 
     $$
\Perf([Y/T]) \simeq \Perf([X_{\cT}/T])
     $$  
     which intertwines the natural monoidal action  of $\Perf([*/T])$ on its source and   target 
     \item There is an equivalence 
     $$
\Perf([X_{\cT}/T]) \simeq \Perf([X_{\cT'}/T])
     $$  
     which intertwines the natural monoidal action  of $\Perf([*/T])$ on its source and   target 
     \end{enumerate}
       \end{proposition}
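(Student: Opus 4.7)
The plan is to leverage Kawamata's toric McKay correspondence. The non-equivariant form of (1) asserts that there is an equivalence $\Perf(Y) \simeq \Perf(X_\cT)$ implemented by a Fourier--Mukai transform whose kernel $\cK$ lies on the fiber product $Y \times_X X_\cT$. To deduce (1) I would enhance this to a $T$-equivariant statement. Both $q \colon Y \to X$ and $p_\cT \colon X_\cT \to X$ are toric, hence $T$-equivariant. In Kawamata's construction the kernel $\cK$ is assembled from toric data: it can be realized either as a tilting bundle built from $T$-equivariant line bundles, or (in the Bridgeland--King--Reid guise, when $X_\cT$ is the $G$-Hilbert scheme) as the universal subscheme, which is $T$-invariant. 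In either case $\cK$ refines to an object of $\Perf([(Y \times_X X_\cT)/T])$, and the associated equivariant Fourier--Mukai transform yields the desired equivalence $\Perf([Y/T]) \simeq \Perf([X_\cT/T])$.

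The compatibility with the $\Perf([*/T])$-action would be verified as follows. This action on either side is pullback along the structure map to $[*/T]$; since the kernel is $T$-equivariant, convolution with it commutes with pullback along the projections from the fiber product, and these projections are compatible with the maps to $[*/T]$. More conceptually, one sets up the Fourier--Mukai equivalence internally in the symmetric monoidal $\infty$-category of $\Perf([*/T])$-modules, so that $\Perf([*/T])$-linearity is automatic from the construction of the kernel in $\Perf([(Y\times_X X_\cT)/T])$.

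Statement (2) is then an immediate corollary: applying (1) twice yields the chain of $\Perf([*/T])$-linear equivalences
$$
\Perf([X_\cT/T]) \simeq \Perf([Y/T]) \simeq \Perf([X_{\cT'}/T]),
$$
and the composition remains $\Perf([*/T])$-linear. The main obstacle is thus concentrated in the verification that Kawamata's kernel can be chosen $T$-equivariantly; this is exactly where the toric setting is favourable, since the kernel is manifestly built from $T$-invariant subschemes and $T$-equivariant line bundles. An alternative, more combinatorial route to (2) is to use the fact that any two unimodular triangulations of $\Delta$ are connected by a sequence of bistellar flips, each realizing an Atiyah-type toric flop between two intermediate crepant resolutions, and to establish the $T$-equivariant derived equivalence one flop at a time \`a la Bondal--Orlov; this avoids passing through the stack $Y$, at the cost of more involved bookkeeping of the flop kernels.
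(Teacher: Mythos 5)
Your proposal follows essentially the same strategy as the paper: invoke Kawamata's toric McKay correspondence for (1), verify that the equivalence is $T$-equivariant so that taking $T$-invariants (equivalently, passing to the quotient stacks) yields a $\Perf([*/T])$-linear equivalence, and then obtain (2) by composing two instances of (1). The paper's implementation is slightly cleaner on one point: rather than arguing that a Fourier--Mukai kernel admits a $T$-equivariant lift (which requires some care, and your appeal to BKR/$G$-Hilb only covers the case when $X_\cT$ happens to be the $G$-Hilbert scheme, not an arbitrary unimodular triangulation), it notes that Kawamata's equivalence is given directly as $p_*q^*$, where $p$ and $q$ are toric maps from the normalization $\cX$ of $Y\times_X X_\cT$ to $X_\cT$ and $Y$ respectively; pullback and pushforward along toric maps are automatically compatible with the $T$-action, so $\Perf([*/T])$-linearity is immediate after taking $T$-invariants, with no need to discuss the kernel at all.
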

       \begin{proof}
       Let us start recalling a few well-known   facts. If $S$ is a toric stack then $\Perf(S)$ carries an action of $T$ given by pull-back along the action on $S$. Since we are working with $\infty$-categories we can meaningfully take  $T$-invariants,  and we find that
       $$
       \Perf(S)^T \simeq \Perf([S/T])
       $$       
   Next, let $S$ and $S'$ be toric DM stacks and let $f:S \to S'$ be a proper toric map. Then the pull-back and the push-forward along $f$
          $$
       f^*: \Perf(S') \to \Perf(S) \quad f_*:\Perf(S) \to \Perf(S')
       $$
       are compatible with the torus action on their source and target. Thus, taking   $T$-invariants we obtain functors  
$$
       f^*: \Perf([S'/T]) \to \Perf([S/T]) \quad f_*:\Perf([S/T]) \to \Perf([S'/T])
       $$
       that intertwine the monoidal action of $\Perf([*/T])$ on their source and target. 
       
       Now the first claim follows from Kawamata's toric McKay correspondence \cite[Theorem 4.2]{kawamata2005}. Kawamata constructs an explicit derived equivalence between $Y$ and $X_\cT$. Namely, let $\cX$ be the normalization of the fiber product $Y \times_X X_\cT$. Then $\cX$ is a toric stack equipped with toric maps
       $$
       \xymatrix{ & \cX \ar[dr]^p \ar[dl]_q&\\
       Y && X_\cT}
       $$
       Kawamata proves that the composition 
       $
       p_*q^*: \Perf(Y) \to \Perf(X)
       $
       is an equivalence; further, it is  compatible with the $T$-action on its source and target, since $p$ and $q$ are toric. Taking $T$-invariants proves the first claim. The second claim follows from the first, because  $X_\cT$ and $X_{\cT'}$ are both equivariantly derived equivalent to $Y$.
       \end{proof}
   
\begin{example}
\label{examplenoniso}
We are now ready to present a simple example of good toric 3-folds, which are equivariantly derived equivalent, but have non isomorphic equivariant elliptic cohomology. Consider the action of the standard torus $T=(\mathbb{C}^*)^3$ on $\mathbb{A}^3$. Let $G$ be the kernel of the  homomorphism of groups
$$
m: (\mu_2)^3  \to \mu_2,  \quad m(\xi_1, \xi_2, \xi_3)=\xi_1\xi_2\xi_3
$$  
The group $G$ is a subgroup of $(\mathbb{C}^*)^3$ and is contained in $SL_3(\mathbb{C})$. The fan of $X=\mathbb{A}^3/G$ is the cone over the  2-dimensional lattice polytope $\Delta$ below
$$
    \begin{tikzpicture} 
\filldraw [black] (3,0) circle (2pt); 
\filldraw [black] (3,1) circle (2pt); 
\filldraw [black] (3,2) circle (2pt); 
\filldraw [black] (5,0) circle (2pt); 
\filldraw [black] (4,1) circle (2pt); 
\filldraw [black] (4,0) circle (2pt); 
\draw (3,0) -- (3,2); 
\draw (3,0) -- (5,0); 
\draw (5,0) -- (3,2); 

%
%
%
%
%
\end{tikzpicture}
   $$
Consider the following two unimodular triangulations $\cT$ and $\cT'$ of $\Delta$
$$
    \begin{tikzpicture} 
\filldraw [black] (3,0) circle (2pt); 
\filldraw [black] (3,1) circle (2pt); 
\filldraw [black] (3,2) circle (2pt); 
\filldraw [black] (5,0) circle (2pt); 
\filldraw [black] (4,1) circle (2pt); 
\filldraw [black] (4,0) circle (2pt); 
\draw (3,0) -- (3,2); 
\draw (3,0) -- (5,0); 
\draw (5,0) -- (3,2); 
\draw[green] (3,1) -- (4,1); 
\draw (3,1) -- (4,0); 
\draw (4,0) -- (4,1);

\filldraw [black] (10,0) circle (2pt); 
\filldraw [black] (10,1) circle (2pt); 
\filldraw [black] (10,2) circle (2pt); 
\filldraw [black] (12,0) circle (2pt); 
\filldraw [black] (11,1) circle (2pt); 
\filldraw [black] (11,0) circle (2pt); 
\draw (10,0) -- (10,2); 
\draw (10,0) -- (12,0); 
\draw (12,0) -- (10,2); 
\draw (11,0) -- (11,1); 
\draw  (10,1) -- (11,0); 
\draw[red] (11,0) -- (10,2); 
\end{tikzpicture}
   $$
The toric 3-folds $X_\cT$ and $X_{\cT'}$ are crepant resolutions of $X$. They are related by an Atiyah flop. By Proposition \ref{equivderiv} there is an equivalence 
$$
\Perf([X_\cT/T]) \simeq \Perf([X_{\cT'}/T])
$$
which intertwines the monoidal $\Perf([*/T])$-action on source and target. Now consider the arcs that have been flopped. The green arc gives rise to a cone $\tau \in (\Sigma_{X_\cT}^{2})^*$ having the property that there is no $\tau' \in (\Sigma_{X_{\cT'}}^{2})^*$ such that 
$\langle \tau \rangle = \langle \tau' \rangle$. By Theorem \ref{main1}, $\Ell_T(X_\cT)$ is not isomorphic to $\Ell_T(X_{\cT'})$. 
\end{example}
\begin{remark}
Distinct toric crepant resolutions of the same simplicial quotient singularity will often (always?) have non-isomorphic equivariant elliptic cohomology; though, by Proposition \ref{equivderiv} they are equivariantly derived equivalent. Example \ref{examplenoniso} is only the simplest instance where this phenomenon occurs. Using our  Theorem \ref{main1} we can check that this happens for most cases, in all dimensions.

  In particular, although Example \ref{examplenoniso} is about a pair of non-proper varieties, it is  easy to generate examples which are proper. For instance, since $G$ is a subgroup of $T$, if we  compactify $\mathbb{A}^n$ to $\mathbb{P}^n$, the $G$-action will naturally extend to $\mathbb{P}^n$.  Costructing a toric crepant resolution of $\mathbb{P}^n/G$ amounts to choosing in a compatible way resolutions of the toric affine patches around each torus fixed point. For an illustration of  the beautiful geometry of these resolutions we can refer the reader, for instance, to Figure 1 of \cite{pascaleff2022singularity}. Different choices will give rise to equivariantly derived equivalent toric varieties, by gluing the local equivalences from  Proposition \ref{equivderiv}.   For many of these  resolutions, one can verify that equivariant elliptic cohomologies are different by applying Theorem \ref{main1}, exactly as we did in Example \ref{examplenoniso}. \end{remark}

\begin{remark}
Totaro  \cite{totaro2000chern} has shown that the \emph{elliptic genus} is invariant under flops, and in fact is universal with that property. This implies in particular that the examples coming from crepant resolutions in the toric setting, which we have considered in this article, will have different (equivariant) elliptic cohomology  but the   same elliptic genus. This suggests the very interesting question whether the elliptic genus is an invariant of the derived category,  although elliptic cohomology  is not.
\end{remark}


\end{document}